\newcounter{bullet}
\newtheorem{thm}{Theorem}[section]
\newtheorem{prop}[thm]{Proposition}
\newtheorem{cor}[thm]{Corollary}
\newtheorem{lem}[thm]{Lemma}
\theoremstyle{definition}
\newtheorem{mydef}[thm]{Definition}
\newtheorem{example}[thm]{Example}
\newtheorem{remark}[thm]{Remark}
\newtheorem{obs}[thm]{Observation}
\crefname{lem}{lemma}{lemmas}
\newcommand{\gO}{\Omega}
\newcommand{\bO}{O}
\newcommand{\we}{\omega}
\newcommand{\thre}{T}
\newcommand{\NN}{\mathbb{N}}
\newcommand{\cA}{\mathcal{A} }
\newcommand{\cB}{\mathcal{B} }
\newcommand{\cC}{\mathcal{C} }
\newcommand{\cD}{\mathcal{D} }
\newcommand{\cF}{\mathcal{F} }
\newcommand{\cH}{\mathcal{H} }
\newcommand{\cI}{\mathcal{I} }
\newcommand{\cL}{\mathcal{L} }
\newcommand{\cP}{\mathcal{P} }
\newcommand{\cQ}{\mathcal{Q} }
\newcommand{\cU}{\mathcal{U} }
\newcommand{\cV}{\mathcal{V} }
\newcommand{\cW}{\mathcal{W} }
\newcommand{\bE}{\mathbf{E}}
\newcommand{\bbf}{\mathbf{f}}
\newcommand{\beq}[1]{\begin{equation}\label{#1}}
\newcommand{\enq}[0]{\end{equation}}
\newcommand{\eps}{\epsilon}
\newcommand{\gd}[0]{\delta }
\newcommand{\nin}[0]{\noindent}
\newcommand{\sub}[0]{\subseteq}
\newcommand{\ra}[0]{\rightarrow}
\newcommand{\bin}[0]{\mbox{\rm{Bin}}}
\newcommand{\pr}[0]{\mathbb{P}}
\newcommand{\rot}[0]{\text{rot}}
\newcommand{\qq}[0]{(q/2)^2}
\newcommand{\hq}{\frac{q}{2}}
\renewcommand{\eta}{\left(\left(\frac{q}{2}\right)^2\right)}
\begin{document}

\title{The number of colorings of the middle layers of the Hamming cube}

\author{Lina Li $^*$}
\thanks{$^*$  Department of Combinatorics and Optimization, Univerisity of Waterloo}
\email{lina.li@uwaterloo.ca}

\author{Gweneth McKinley $^\dagger$}
\thanks{$^\dagger$ Department of Mathematics, UC San Diego}
\email{gmckinley@ucsd.edu}

\author{Jinyoung Park $^\ddagger$}
\thanks{$^\ddagger$ Department of Mathematics, Courant Institute of Mathematical Sciences, New York University. Research supported by NSF grant DMS-2153844.}
\email{jinyoungpark@nyu.edu}

\begin{abstract}
For an odd integer $n = 2d-1$, let $\mathcal B_d$ be the subgraph of the hypercube $Q_n$ induced by the two largest layers. In this paper, we describe the typical structure of proper $q$-colorings of $V(\mathcal B_d)$ and give asymptotics on the number of such colorings. 
The proofs use various tools including information theory (entropy), Sapozhenko's graph container method and a recently developed method of M. Jenssen and W. Perkins that combines Sapozhenko's graph container lemma with the cluster expansion for polymer models from
statistical physics.
\end{abstract}

\maketitle

\section{Introduction}\label{sec.intro}
We begin by establishing some foundational definitions and notation, which we will use to state our first main theorem (\Cref{realMT} below). Write $Q_n$ for the $n$-dimensional Hamming cube (the graph with the vertex set $\{0, 1\}^ n$ where two vertices are adjacent if they differ in exactly one coordinate).
Denote by $\cL_k$ the "$k$-th layer" of $Q_n$, i.e., the collection of the vertices in $Q_n$ with $k$ 1's in its coordinates. A coloring in this paper always refers to a proper coloring of vertices of the given graph. 
The main results of this paper concern the typical structure of colorings of the middle layers of $Q_n$, which is a problem suggested by Balogh, Garcia, and the first author \cite{BGL}. 

Following \cite{BGL}, throughout the paper, we always assume that $n$ is odd and let $d=(n+1)/2$. Write $\cB_d$ for the subgraph of $Q_n$ induced on $\cL_{d-1} \cup \cL_d$ (the two middle layers of $Q_n$). Note that $\cB_d$ is $d$-regular, bipartite with the unique bipartition $\cL_{d-1} \cup \cL_d$, and balanced (i.e., $|\cL_{d-1}|=|\cL_d|$). We use $N$ for $|V(\cB_d)| (=2{n \choose d})$, and assume $d \ra \infty$.

Given $q \in \mathbb N$ and a graph $G$, write $C_q(G)$ for the collection of $q$-colorings of $G$ and let $c_q(G)=|C_q(G)|$. Our first main theorem concerns asymptotics for the number of $q$-colorings of $\cB_d$ for all even $q$. 

\begin{thm}\label{realMT}
Let $q \ge 4$ be even. Then we have 
\[
c_q(\cB_d)  = (q/2)^{N}\binom{q}{q/2}\exp\left((1+o(1))f(q, d)\right)
\]
as $d \ra \infty$, where
\[
f(q, d)= N(1 - 2/q)^d + N(1 - 2/q)^{2d}\cdot \frac{1}{2}\left(d(1 - 2/q)^{-2} - d-1\right).
\]
In particular, for $q=4$, 
\[
c_4(\cB_d)  \sim 6\cdot 2^N\exp\left(f(4, d)\right)=6\cdot 2^N\exp\left(N2^{-d} + N2^{-2d}(3d/2 -1/2)\right).
\]
\end{thm}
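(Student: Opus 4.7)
The plan is to combine the polymer-model and cluster-expansion framework of Jenssen--Perkins with Sapozhenko-style container/entropy arguments for localization. The leading factor $\binom{q}{q/2}(q/2)^N$ reflects the $\binom{q}{q/2}$ ``ground states'': ordered partitions $[q]=A\sqcup B$ with $|A|=|B|=q/2$, for each of which there are exactly $(q/2)^N$ colorings using $A$ on $\cL_{d-1}$ and $B$ on $\cL_d$. The first step is to partition $q$-colorings of $\cB_d$ according to their dominant phase, so that almost every proper coloring is assigned to a unique $(A,B)$, and
$$c_q(\cB_d)\;=\;\bigl(1+o(1)\bigr)\sum_{(A,B)}c_q(\cB_d;A,B),$$
where $c_q(\cB_d;A,B)$ counts colorings whose ``defect'' set (vertices using the wrong side's color) is small enough to be uniquely attached to $(A,B)$. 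By the $\binom{q}{q/2}$-fold symmetry, all phases contribute equally, and the task reduces to computing $c_q(\cB_d;A,B)=(q/2)^N\,\Xi(A,B)$ for one fixed phase.

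The next step is to set up a polymer model on $\cB_d$. Fix $(A,B)$ and call $v\in\cL_{d-1}$ (resp.\ $v\in\cL_d$) a \emph{defect} if its color is in $B$ (resp.\ $A$). A polymer $\gamma=(U,\chi)$ consists of a $2$-linked (in $\cB_d$) set of same-side defect vertices together with a wrong-color assignment $\chi$; two polymers are compatible iff their supports and shared neighborhoods are disjoint. A direct computation gives
$$c_q(\cB_d;A,B)\;=\;(q/2)^N\,\Xi(A,B),\qquad \Xi(A,B)\;=\sum_{\substack{\{\gamma_1,\ldots,\gamma_k\}\\ \text{mutually compatible}}}\prod_i w(\gamma_i),$$
with polymer weight
$$w(\gamma)\;=\;\prod_{u\in U}\frac{1}{q/2}\,\prod_{v\in N(U)}\frac{q/2-k_v(\gamma)}{q/2},$$
where $k_v(\gamma)$ is the number of distinct wrong colors $v$ sees on its neighbors in $U$. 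Thanks to the $d$-regularity of $\cB_d$ and $q\geq 4$, the weights decay fast enough that the Kotecký--Preiss convergence criterion applies and
$$\log\Xi(A,B)\;=\;\sum_{\text{clusters }C}\Phi(C)\prod_{\gamma\in C}w(\gamma)$$
converges.

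Truncating the cluster expansion, the singleton-cluster contribution sums to $\sum_{v}\sum_{c\text{ wrong}}\tfrac{1}{q/2}(1-2/q)^d=N(1-2/q)^d$, giving the first term of $f(q,d)$. The second-order contribution combines (i) size-two polymers (pairs of defects on the same side with a common neighbor) and (ii) Ursell corrections from two incompatible singletons. Here the geometry of $\cB_d$ enters: any two vertices of $\cL_{d-1}$ at Hamming distance $2$ have a \emph{unique} common neighbor in $\cL_d$, and each vertex has exactly $d(d-1)$ Hamming-distance-$2$ partners on its side. Combined with careful bookkeeping of the matched-color and distinct-color subcases in the pair weights, this yields exactly $N(1-2/q)^{2d}\cdot\tfrac{1}{2}\bigl(d(1-2/q)^{-2}-d-1\bigr)$. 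A Kotecký--Preiss tail estimate then shows that all clusters of size $\geq 3$ contribute $o(f(q,d))$; summing over the $\binom{q}{q/2}$ phases gives the stated asymptotic, and the $q=4$ special case follows by substituting $\binom{4}{2}=6$ and $(1-2/q)^{-2}=4$.

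The main obstacle is the first step: absorbing colorings that are \emph{not} close to any ground state into an $\exp(o(f(q,d)))$ correction. Since $f(q,d)$ is of order $N(1-2/q)^d$ --- exponentially large in $d$ but exponentially small compared to $(q/2)^N$ --- the error tolerance is tight and a naive container bound will not suffice. The plan is to adapt Sapozhenko's container method to $\cB_d$, parameterizing containers by approximate defect profiles and combining the strong co-degree and isoperimetric properties of $\cB_d$ (inherited from $Q_n$) with an entropy argument to bound both the number of containers and the colorings per container. A secondary difficulty is the precise second-order cluster computation: recovering the specific coefficient $d(1-2/q)^{-2}-d-1$ requires careful separation of matched- versus distinct-color pair weights, and the uniqueness of the common neighbor for distance-$2$ pairs in $\cB_d$ is what collapses what would naively be a $d^2$ coefficient down to $d$.
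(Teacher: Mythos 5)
Your overall architecture --- decomposing $C_q(\cB_d)$ over the $\binom{q}{q/2}$ principal partitions, attaching to each phase a polymer model of $2$-linked defect sets, verifying Kotecký--Preiss, and truncating the cluster expansion after the size-$2$ clusters --- is the same as the paper's, and your first-order term $N(1-2/q)^d$ and the final second-order coefficient agree with the paper's computations of $L_1$ and $L_2$. But there is a genuine gap in your plan for the step you yourself identify as the main obstacle. You propose to absorb colorings far from every ground state by ``adapting Sapozhenko's container method, parameterizing containers by approximate defect profiles.'' Containers cannot do this job unaided: the container counts are only affordable, and the relevant isoperimetric expansion of $\cB_d$ only applies, once the flaw is already known to have size $2^{-\Omega(d)}N$; and for a balanced flaw there is no entropy deficit to exploit at all, since a coloring whose ``flaw'' relative to $(A,B)$ occupies half of $V$ may simply be a ground state for a different principal partition. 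The paper needs a separate first step --- a Shearer-type entropy argument in the style of Kahn and Engbers--Galvin, implemented on $\cB_d$ via a coordinate ``rotation'' that produces levels and a partial matching between the two halves of $V$ --- to show that all but a $2^{-\Omega(d)}$ fraction of colorings already have flaw at most $2^{-\Omega(d)}N$; only then do containers and entropy take over for flaws in the intermediate range. That first step is also the \emph{only} place where evenness of $q$ is used (for odd $q$ a non-principal pair with both parts of size $\lceil q/2\rceil$ yields no entropy loss), and your plan gives no account of the parity restriction in the theorem.

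A second, more local problem: you define a polymer as a $2$-linked set of \emph{same-side} defect vertices, with compatibility requiring disjoint supports and neighborhoods. As stated, this model cannot represent colorings in which two adjacent vertices on opposite layers are both defects --- such configurations are perfectly proper, since the wrong-color palettes of the two layers are disjoint --- and they matter at exactly the order you are computing. In the paper's model the size-$2$ polymers include the edges of $\cB_d$ (contributing $\tfrac12 Nd(1-2/q)^{2(d-1)}$), which together with the Ursell correction $-\tfrac12 Nd(1-2/q)^{2d}$ from ordered pairs of adjacent singleton polymers produces precisely the $d(1-2/q)^{-2}-d$ portion of your coefficient. So the polymers must be allowed to be arbitrary $2$-linked subsets of $V$ meeting both layers, with weights defined by counting colorings of $\gamma^+$, for the $L_2$ value you assert to actually emerge from the model you defined.
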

Theorem ~\ref{realMT} itself does not give an asymptotic formula for general $c_q(\cB_d)$ when $q\ge 6$.
In fact, Theorem ~\ref{realMT} is a direct consequence of a stronger result (Theorem~\ref{MT1'}), which provides a detailed approximation of $c_q(\cB_d)$ for all even $q$.
Roughly speaking, with Theorem~\ref{MT1'}, for any given even $q$, the precise asymptotic formula of $c_q(\cB_d)$ can be computed in a finite time (meaning that the running time does not depend on the input size $d$).
The statement of Theorem~\ref{MT1'} requires some technical definitions arising from {statistical physics}, and is therefore postponed to Section~\ref{sec.realMT}.

Our second main theorem, \Cref{realMT2}, characterizes the typical structure of $q$-colorings of $\cB_d$. Before we state the theorem, we first fill in some background and introduce relevant notation.
\vspace{5pt}

\noindent \textit{History and background.} 
We begin with the number of colorings of the Hamming cube $c_q(Q_n)$. The first result about asymptotics of $c_q(Q_n)$ was given in 2003 by Galvin~\cite{G} for the case $q=3$. 
A structural characterization of $C_q(Q_n)$ was later obtained by Engbers and Galvin~\cite{EG} in 2012 (in fact, their result is written in a more general context of graph homomorphisms of discrete tori); this also motivated them to propose a conjecture on the asymptotics of $c_q(Q_n)$ for larger $q$. The next case, $q=4$, was established by Kahn and the third author~\cite{KPq}, affirming the conjecture of Engbers and Galvin.
Recently, Jenssen and Keevash~\cite{JK} settled the conjecture of Engbers and Galvin for any $q$, also providing  a framework which can be used to compute the asymptotics of $c_q(Q_n)$ in finite time for any $q$.

Motivated by the problem of enumerating maximal antichains in Boolean lattices,
Duffus, Frankl, and R\"{o}dl~\cite{duffus2011maximal} initiated the study of related enumeration problems on consecutive layers of $Q_n$. Answering their questions,
Ilinca and Kahn~\cite{ilinca2013counting} determined logarithmic asymptotics of the number of maximal independent sets in the two consecutive layers of $Q_n$ (while the asymptotics itself still remains open). Moving to 2021, inspired by the work of Jenssen and Perkins~\cite{JP} that gives much finer asymptotics for the number of independent sets of $Q_n$ (which was originally obtained by Sapozhenko \cite{Sap87}), Balogh, Garcia and the first author~\cite{BGL} determined asymptotics for the number of independent sets in $\cB_d$, the middle two layers of $Q_n$. In that paper, they gave a conjecture for the asymptotics of the number of maximal independent sets in $\cB_d$, and also proposed the problem of counting $q$-colorings of $\cB_d$.
We remark that independent sets in consecutive layers of $Q_n$ are also closely related to another classical topic in combinatorics, \textit{intersecting set families}, see e.g., \cite{balogh2015intersecting, balogh2021intersecting, balogh2019structure, frankl2018counting}.

In addition to its own interests, another important motivation to study enumeration problems on $\cB_d$ is that it is a good \textit{entry point} to generalize this line of work -- including counting/analyzing typical structures of graph homomorphisms (independent sets, proper colorings, Lipschitz functions, etc.), maximal independent sets, linear extensions of a partial order, and so on: see e.g. \cite{JKP, GGJ, PSY} for some related problems -- from $Q_n$ to other bipartite graphs with good expansion properties. Many prior works, limited by the methods used, have relied heavily on the structure of $Q_n$ (see e.g.,~\cite{EG, G, JK, misqn}).
As a subgraph of $Q_n$, the middle two layers --- while sharing many properties with $Q_n$ in common, e.g., regularity, bipartition, good expansion of small subsets --- also exhibit their own structural behavior, which, in some sense, is {"less structured"} compared to $Q_n$. Because of this difference, the approaches that have successfully worked for enumeration problems on $Q_n$ often do not immediately extend to $\cB_d$. 
Therefore, the study of $\cB_d$ may reveal clues to understanding 
more general classes of bipartite graphs, and may lead to progress on related problems. We will return to this point in Section~\ref{sec:overview}.
\vspace{5pt}

\noindent \textit{Notation.} We use $V$ and $E$ for $V(\cB_d)$ and $E(\cB_d)$, respectively. Given $q$, we use $\cQ=\{1,2,\ldots,q\}$ for the collection of $q$ colors and $f$ for a coloring in $C_q(\cB_d)$. Say an ordered partition $(A,B)$ of $\cQ$ is \textit{principal} if $\{|A|,|B|\}=\{\lfloor q/2 \rfloor, \lceil q/2 \rceil\}$, and $A \sub \cQ$ is \textit{principal} if $A$ is a part of a principal partition (i.e., $|A| \in \{\lfloor q/2 \rfloor, \lceil q/2 \rceil\}$).
For a given principal $(A,B)$, we say $f$ \textit{agrees with $(A,B)$ at $v$} if
\[
f_v \in A \Leftrightarrow v \in \cL_{d}
\quad\mbox{and}\quad
f_v \in B \Leftrightarrow v \in \cL_{d-1}
\]
(where $f_v$ is the value of $f$ at $v$). Given a principal $(A,B)$ and $f$, define
\begin{equation}\label{def:flaw}
X_{A,B}(f)=\{v\in V \mid \mbox{$f$ \textit{disagrees} with $(A,B)$ at $v$}\}.
\end{equation}
We say $f$ is a \textit{ground state coloring} if $X_{A,B}(f)=\emptyset$ for some principal $(A, B)$.

For a simple graph $G$ and a set $W \sub V(G)$, we write $G[W]$ for the subgraph of $G$ induced on $W$ (as usual). Say $W \sub V(G)$ is \textit{2-linked} if $G^2[W]$ is connected, where $G^2$ is the square of $G$ (that is, $G^2$ is the graph on $V(G)$ having an edge between each pair of vertices at distance at most 2 in $G$). A \textit{2-linked component} of $G$ is a maximal 2-linked subset of $V(G)$.

For any integer $q \ge 4$, define the \textit{threshold polymer size}, denoted by $T(q)$, to be 
\beq{def:threpolysize} \mbox{the smallest integer $t$ that satisfies $2+t\log(1-2/q)<0$.}\enq
Our second main theorem essentially reveals that almost every coloring is {"close"} to some {ground state coloring}.

\begin{thm}\label{realMT2}
Let $q \ge 4$ be even. Almost all proper $q$-colorings $f$ of $\cB_d$ satisfy the following property: there exists a principal partition $(A,B)$ such that every 2-linked component of $X_{A,B}(f)$ is of size less than $T(q)$. 
\end{thm}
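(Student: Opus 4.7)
The plan is to show that the number of ``bad'' colorings $f$ (those failing the stated conclusion) is $o(c_q(\cB_d))$, via a polymer-model argument combining a union bound over principal partitions, a weight estimate for each 2-linked ``defect polymer'', and Sapozhenko's graph container method. First, I would reduce to a single principal partition: if $f$ fails the conclusion, then in particular the principal $(A^*, B^*)$ minimizing $|X_{A, B}(f)|$ has a 2-linked component of size $\ge T(q)$ in $X_{A^*, B^*}(f)$. The involution $(A, B) \mapsto (\cQ \setminus A,\, \cQ \setminus B)$ satisfies $X_{\cQ\setminus A,\cQ\setminus B}(f) = V \setminus X_{A,B}(f)$, forcing $|X_{A^*,B^*}(f)| \le N/2$. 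Union-bounding over the $\binom{q}{q/2}$ principal partitions, it then suffices to bound, for one fixed principal $(A, B)$, the number of $f$ with $|X_{A,B}(f)| \le N/2$ and $X_{A,B}(f)$ containing a 2-linked component of size $\ge T(q)$.

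Next, fix a 2-linked set $S \subseteq V$ of size $k$ and count proper colorings with $S$ as a 2-linked component of $X_{A,B}(f)$: each $v \in S$ lies in the ``wrong'' half of $(A, B)$ ($q/2$ choices), each $u \in N(S) \setminus S$ must agree with $(A, B)$ and avoid its $S$-neighbors' colors (losing a factor $(1 - 2/q)$), and all other vertices contribute the ground-state factor $q/2$. So the count is at most $(q/2)^N \cdot w(S)$ with polymer weight $w(S) \le (1 - 2/q)^{|N(S) \setminus S|}$, modulo corrections from compatibility with other polymers that can be absorbed into an $e^{O(f(q, d))}$ overall factor. A BFS argument in $\cB_d^2$ gives $|N(S) \setminus S| \ge d + \Omega(k)$ for any 2-linked $S$ of size $k$. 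To count 2-linked sets of size $k$, the naive degree-based bound (using $\Delta(\cB_d^2) = O(d^2)$) gives $N \cdot O(d^2)^{k-1}$, sufficient for small $k$; for larger $k$ I would adapt Sapozhenko's container lemma to $\cB_d$---grouping 2-linked sets by coarse ``approximations'' of their neighborhood profile and exploiting expansion of small sets in $\cB_d$---to obtain a much stronger bound of the form $N \cdot c^k$ for an absolute constant $c$.

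Assembling the pieces yields
\[
\#\{\text{bad } f\} \;\le\; \binom{q}{q/2} \sum_{k \ge T(q)} (\text{\# polymers of size } k) \cdot (q/2)^N \cdot (1 - 2/q)^{d + \Omega(k)} \cdot e^{O(f(q, d))}.
\]
The defining condition $2 + T(q) \log(1 - 2/q) < 0$ is precisely the Kotecký--Preiss-style convergence criterion for this series: the decay $(1 - 2/q)^{\Omega(k)}$ dominates the container growth $c^k$, so the sum is controlled by its $k = T(q)$ term, which is of order $(q/2)^N \cdot N \cdot \mathrm{poly}(d) \cdot (1 - 2/q)^d \cdot e^{O(f(q, d))}$. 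This is $o((q/2)^N \, e^{f(q, d)})$, hence $o(c_q(\cB_d))$ by \Cref{realMT}, because $f(q, d) \sim N(1 - 2/q)^d \to \infty$ and exponential growth of $e^{f(q, d)}$ beats the polynomial factors in $d$.

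The main technical obstacle will be adapting Sapozhenko's container method to $\cB_d$, whose neighborhood structure is more delicate than that of $Q_n$: two same-layer vertices at distance $2$ share exactly one common neighbor in $\cB_d$ rather than two as in $Q_n$, so the expansion and isoperimetric inputs to the container construction must be re-derived carefully. Secondary issues include handling polymers that span both layers (where part of $N(S)$ lies inside $S$, so the BFS bound on $|N(S) \setminus S|$ needs an adjustment) and the borderline case $|X_{A^*, B^*}(f)| \approx N/2$, where the ``canonical'' principal partition is not unique and a tie-breaking device is needed in the reduction.
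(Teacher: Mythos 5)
Your overall architecture (reduce to one principal partition, union-bound over 2-linked defect components, weight each component, control the entropy of large components with containers) is in the right spirit, but there is a quantitative gap at the heart of the argument that makes the series estimate fail. The bound $|N(S)\setminus S|\ge d+\Omega(k)$ for a 2-linked $S$ of size $k$ is far too weak. The correct input is the Kruskal--Katona/Lov\'asz vertex-isoperimetric inequality (Proposition~\ref{lem:isoper} in the paper): a set of size $k\le d/4$ in one layer satisfies $|N(S)|\ge dk-k^2/2$, so the per-component weight is roughly $(1-2/q)^{(1-o(1))dk}$, not $(1-2/q)^{d+\Omega(k)}$. With your weight, the contribution of components of any fixed size $k\ge T(q)$ is already of order $N\cdot\mathrm{poly}(d)\cdot(1-2/q)^{d}=2^{d(2+\log(1-2/q))+O(\log d)}$, which for every even $q\ge 4$ tends to infinity; no container bound on the number of 2-linked sets can rescue this, because the loss is in the weight of a \emph{single} polymer, not in the entropy. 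Relatedly, your reading of the threshold is off: $T(q)$ is not a ratio-of-a-geometric-series condition balancing $c^k$ against $(1-2/q)^{\Omega(k)}$. It is the smallest $t$ with $2+t\log(1-2/q)<0$, which (since $N=2^{(2-o(1))d}$) is exactly the condition $N(1-2/q)^{dt}=o(1)$, i.e.\ the condition under which a \emph{single} term $N d^{O(t)}(1-2/q)^{dt}$ vanishes. That is what Lemma~\ref{lem:erroresti} establishes, via the weight bound \eqref{ineq:polyweight} combined with $|\partial(\gamma)|\ge d|\gamma|-O(1)$.

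The second gap is the phrase ``corrections from compatibility with other polymers that can be absorbed into an $e^{O(f(q,d))}$ overall factor.'' Since $f(q,d)\to\infty$ and $c_q(\cB_d)=(q/2)^N\binom{q}{q/2}e^{(1+o(1))f(q,d)}$, an uncontrolled constant in that exponent (or even an additive $o(f(q,d))$ error) swamps the quantity you are trying to show is $o(c_q(\cB_d))$; your final display compares $N\cdot\mathrm{poly}(d)\cdot(1-2/q)^d e^{O(f(q,d))}$ against $e^{f(q,d)}$, and this comparison is not decidable without pinning the exponent down to $(1+o(1))f(q,d)$. The paper avoids this by working multiplicatively inside the polymer formalism: it introduces the truncated polymer set $\cP_T$ of 2-linked sets of size at most $T(q)-1$, observes that the good colorings for a fixed $(A,B)$ are counted exactly by $(\lfloor q/2\rfloor\lceil q/2\rceil)^{N/2}\,\Xi(\cP_T,\we_{A,B})$, and shows $\Xi(\cP_T,\we_{A,B})=(1+o(1))\Xi_{A,B}$ because the two cluster expansions agree through order $T(q)-1$ and both tails $\sum_{k\ge T(q)}|L_{A,B}(k)|$ are $o(1)$ by Lemma~\ref{lem:erroresti}; combined with Theorem~\ref{MT1'} (which already handles the reduction to captured colorings via Lemmas~\ref{lem.step1} and~\ref{lem:capture1}), this gives the theorem. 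If you want to keep your more hands-on union bound, you must (a) replace your neighborhood lower bound by the isoperimetric one, and (b) replace the $e^{O(f(q,d))}$ absorption by a genuine comparison of partition functions, at which point you have essentially reconstructed the paper's proof.
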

We believe that $T(q)$ is the smallest integer for which Theorem~\ref{realMT2} holds.
In fact, one can obtain much more detailed structural results on $C_q(\cB_d)$ as in \Cref{notMT} below, whose form is inspired by \cite[Theorem 1.2]{EG} and \cite[Theorem 13.2]{JK}. The proof of \Cref{notMT} is almost identical to the proof of \cite[Theorem 13.2]{JK}  (modulo the results in this paper -- specifically, \Cref{lem.step1,lem:capture1,lem:defect}) thus we omit it.

\begin{thm}\label{notMT}
For any even $q\ge 4$ there exists a constant $\xi \in (1 - 2/q,1)$ that satisfies the following property: if $\xi^d\leq s\leq 1/q^2$, then there exists a partition
\[
C_q(\cB_d) = \cD_0\cup \bigcup_{(A, B) \text{ is principal }} \cD_q(A, B)
\]
satisfying the following properties:
\begin{itemize}
\item[(i)] $|\cD_0|\leq \exp(-\Omega(d))c_q(\cB_d)$;

\item[(ii)] For a principal partition $(A, B)$, a coloring $f \in \cD_q(A, B)$, and a color $a \in A$ (resp. a color $b\in B$), the proportion of vertices of $\cL_d$ (resp. $\cL_{d-1}$) colored $a$  (resp. $b$) is within $s$ of $2/q$.

\item[(iii)] For any distinct principal partitions $(A, B)$ and $(A',B')$, 
\[|\cD_q(A, B)| =\left(1  \pm \exp\left(-\Omega\left(Nt\right)\right)\right)|\cD_q(A', B')|\]
where $t:=\min\{s^2, s/d^2\}$.

\item[(iv)] Let $\mu_q$ be the uniform probability measure on $C_q(\cB_d)$.
For each principal partition $(A, B)$, $x\in \cL_{d}$, $y\in \cL_{d-1}$,  $a\in A$ and $b\in B$,
\[
\pr_{\mu_q}\left(f_x=a\mid f\in \cD_q(A, B)\right)=\frac1{q/2}(1 \pm \exp(-\Omega(d)))
\]
and
\[
\pr_{\mu_q}\left(f_y=b\mid f\in \cD_q(A, B)\right)=\frac1{q/2}(1 \pm \exp(-\Omega(d))).
\] 
\end{itemize}
\end{thm}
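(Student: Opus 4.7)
The plan is to mirror the proof of \cite[Theorem 13.2]{JK}, which establishes the analogous statement for proper $q$-colorings of $Q_n$, substituting the paper's Lemmas~\ref{lem.step1}, \ref{lem:capture1} and \ref{lem:defect} for their hypercube counterparts. Define $\cD_q(A,B)$ to consist of colorings $f$ for which (a) every 2-linked component of $X_{A,B}(f)$ has size less than $T(q)$, and (b) the color-density concentration in part (ii) holds; break ties among multiple eligible principal partitions $(A,B)$ via a fixed ordering, and let $\cD_0$ be the set of colorings not captured by any $\cD_q(A,B)$.

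For (i), \Cref{realMT2} already shows that all but an $\exp(-\Omega(d))$ fraction of colorings satisfy condition (a) for some principal $(A,B)$; it remains to control those satisfying (a) but violating (b). The cluster expansion developed in \Cref{sec.realMT} (implicit in \Cref{MT1'}) gives that the single-vertex color marginal at any $v$ is $(2/q)(1 \pm \exp(-\Omega(d)))$, and a McDiarmid/Azuma-type bound applied to the density statistic, treating the contributions of distinct 2-linked components of the flaw set as essentially independent perturbations (as in \cite{JK}), yields the required exponentially small correction.

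For (ii), the same large-deviation argument gives failure probability $\exp(-\Omega(Nt))$ with $t = \min\{s^2, s/d^2\}$ governing the allowed deviation from $2/q$; the two regimes of $t$ correspond respectively to standard Hoeffding-type concentration in the bulk and to finer polymer-level control at small deviations, as in \cite[Section 13]{JK}. For (iii), the argument uses the (near-)symmetry among principal partitions: permuting colors within each side of $\cQ$ gives a bijection of ground-state colorings between $(A,B)$ and $(A',B')$, and the associated polymer partition functions are literally identical; the deviation of the ratio $|\cD_q(A,B)|/|\cD_q(A',B')|$ from $1$ comes entirely from the density event defining (b), matching the exponent in (ii). For (iv), a direct computation of the single-vertex marginal in the polymer model, as in \cite[Section 13]{JK}, shows that conditionally on $f \in \cD_q(A,B)$ the value $f_x$ for $x \in \cL_d$ is uniform over $A$, and $f_y$ for $y \in \cL_{d-1}$ uniform over $B$, each up to a $(1 \pm \exp(-\Omega(d)))$ multiplicative correction inherited from the subdominant polymer contributions.

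The main obstacle that this blueprint requires overcoming is the replacement of the $Q_n$-specific container and expansion inputs used in \cite{JK} by corresponding statements for $\cB_d$, namely Lemmas~\ref{lem.step1}, \ref{lem:capture1} and \ref{lem:defect} of the present paper; the less rigid geometry of $\cB_d$ (compared to $Q_n$) makes these lemmas the technical heart of the work. Once they are available, the remaining manipulations use only $d$-regularity, the unique balanced bipartition $\cL_{d-1} \cup \cL_d$, and expansion of small 2-linked subsets -- all of which hold for $\cB_d$ -- so the argument of \cite[Theorem 13.2]{JK} transfers with essentially no structural change, which is why the authors omit the detailed write-up.
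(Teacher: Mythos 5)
Your proposal follows exactly the route the paper itself takes: the authors omit the proof precisely because it is the argument of \cite[Theorem 13.2]{JK} with the $Q_n$-specific inputs replaced by \Cref{lem.step1}, \Cref{lem:capture1}, and \Cref{lem:defect}, which is the substitution you describe. The only quibble is cosmetic -- the $\exp(-\Omega(Nt))$ error belongs to part (iii) (via \Cref{lem:defect}(iii)) rather than to the deterministic balance condition (ii) -- but your identification of the ingredients and of where each error term originates matches the intended proof.
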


\section{Overview of the proof}\label{sec:overview}

Informally speaking, the proof of the main theorems consists of the following three steps, each of which requires separate ingredients:
\begin{itemize}
\item[Step 1.] almost all colorings $f$ admit a \textit{ground state};
\item[Step 2.] $f$'s with large \textit{flaws} (i.e., the vertices that disagree with the ground state of $f$) are negligible;
\item[Step 3.] approximately count the total of colorings using Steps 1 \& 2. 
\end{itemize}

Step 1 is inspired by and closely follows the work of \cite{EG, Kahn}, which uses \textit{entropy ideas} initiated by Kahn~\cite{Kahn}. 
For Step 3, we use the \textit{polymer model} and \textit{cluster expansion} method originated from statistical physics. Recently in \cite{JP}, this method was shown to be very powerful in enumeration problems related to $Q_n$. Though it still requires some technical modifications, this cluster expansion-based approach provides a rather \textit{routine framework} for  enumeration problems when it applies; we refer interested readers to~\cite{BGL, JK}.

Perhaps the most interesting part of our proof is Step 2. 
Inspired by the prior works on colorings of $Q_n$ in \cite{JK, KPq}, the proof of this step integrates an entropy approach and \textit{Sapozhenko’s graph container lemma}~\cite{Sap87}.
However,  neither of the previous arguments directly extend to our problem. In particular, the beautiful proof in~\cite{JK} crucially relies on the existence of a special partition of $V(Q_n)$ (\cite[Lemma 8.1]{JK}), but it is not even clear whether such a special vertex partition exists in $\cB_d$.
The {novelty} of our work is that we implement an entropy approach and the graph container framework (not the lemma itself) in such a way that it does not rely on the precise structure of the host graph, but only on its expanding property. We expect that this new approach will also work for more general classes of bipartite graphs with appropriate expansion properties.

Next, we describe more details for each step. As the first step, we show that if $q$ is an even integer, then for almost every $f \in C_q(\cB_d)$ there is some principal $(A,B)$ with which $f$ agrees on all but an exponentially small fraction of the vertices.

\begin{lem}[Step 1: almost all $f$'s admit a ground state]\label{lem.step1} Let $q \ge 4$ be even. 
There exist $\alpha, \beta >0$ such that for all but a $2^{-\alpha d}$-fraction of $f$'s in $C_q(\cB_d)$, there is a principal $(A,B)$ for which
\beq{eq.step1} |X_{A,B}(f)|<2^{-\beta d}N.\enq
\end{lem}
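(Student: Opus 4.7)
The plan is to run an entropy argument in the spirit of Kahn~\cite{Kahn} and Engbers--Galvin~\cite{EG}, adapted to rely only on the $d$-regularity and the small-set expansion of $\cB_d$ rather than on bespoke structural features of $Q_n$. Let $\mathbf f\in C_q(\cB_d)$ be chosen uniformly, so $H(\mathbf f)=\log_2 c_q(\cB_d)$. Counting ground states already yields the baseline lower bound
\[c_q(\cB_d)\ge \binom{q}{q/2}(q/2)^N,\]
so the task is to show that the set of $f$'s violating \eqref{eq.step1} for every principal $(A,B)$ has size at most $2^{-\alpha d}\binom{q}{q/2}(q/2)^N$.

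The first technical step is an entropy upper bound carrying a per-flaw deficit. By the chain rule
\[H(\mathbf f)=H(\mathbf f|_{\cL_{d-1}})+H(\mathbf f|_{\cL_d}\mid \mathbf f|_{\cL_{d-1}})\]
and subadditivity,
\[H(\mathbf f|_{\cL_d}\mid \mathbf f|_{\cL_{d-1}})\le \sum_{v\in \cL_d}\mathbb E[\log_2(q-|N_{\mathbf f}(v)|)],\]
with $N_{\mathbf f}(v)=\{\mathbf f_u:u\sim v\}$. The function $k\mapsto \log_2(q-k)$ is concave on its range and attains its relevant maximum at $k=q/2$, the palette size a ground state would produce on $N(v)$. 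Averaging the above with its mirror inequality (the roles of $\cL_{d-1}$ and $\cL_d$ exchanged) and applying Jensen should yield an upper bound of the shape
\[H(\mathbf f)\le \log_2\!\binom{q}{q/2}+N\log_2(q/2)+\varepsilon(d)-c\cdot \mathbb E[\Phi(\mathbf f)],\]
where $\Phi(f)=\sum_v\phi_v(f)$ is a vertex-wise flaw score with $\phi_v(f)=\Omega(1)$ whenever $|N_f(v)|$ strays from $q/2$, and $\varepsilon(d)=o(N)$ absorbs the (genuine) small loss coming from the fact that the expected palette size in a truly random coloring falls slightly below $q/2$. Combined with the ground state lower bound, this forces $\mathbb E[\Phi(\mathbf f)]=O(\varepsilon(d))$.

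Next I would take $(A(f),B(f))$ to be the principal partition minimizing $|X_{A,B}(f)|$ (essentially the plurality pattern on each layer), and argue $\Phi(f)\gtrsim d\cdot |X_{A(f),B(f)}(f)|$: every $v\in X_{A(f),B(f)}(f)$ contributes a wrong-half color to the palettes of each of its $d$ neighbors, and the small-set expansion of $\cL_{d-1}$ and $\cL_d$ (via Kruskal--Katona) prevents these contributions from being silently absorbed inside clusters of flaws. This yields $\mathbb E|X_{A(\mathbf f),B(\mathbf f)}(\mathbf f)|=O(\varepsilon(d)/d)$, and since $N\ge 2^{(2-o(1))d}$, Markov's inequality delivers the claimed $2^{-\alpha d}$ fraction bound for any sufficiently small $\beta>0$ and a corresponding $\alpha>0$.

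The main obstacle is establishing the per-flaw deficit $\phi_v(f)=\Omega(1)$ robustly, without appealing to the distinguished vertex partition of $V(Q_n)$ that underlies the analogous step in \cite{EG,JK}. The plan is to replace that ingredient by an expansion-based argument on the two layers of $\cB_d$, carefully defining $\phi_v$ so that (i)~the averaging step in the entropy bound loses no more than a constant factor, and (ii)~a 2-linked cluster of flaws still contributes $\Omega(d)$ worth of deficit per flawed vertex, thanks to the small-set expansion of its boundary. Carrying this through uniformly in even $q\ge 4$ is where the bulk of the technical work of Step~1 will lie.
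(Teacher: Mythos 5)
There is a genuine gap, and it sits exactly where your sketch is vaguest: the entropy upper bound with a per-flaw deficit. First, the inequality you write down does not follow from the steps you describe. Exposing $\mathbf f|_{\cL_{d-1}}$ first and using subadditivity on $\cL_d$ leaves you with the term $H(\mathbf f|_{\cL_{d-1}})$, which you can only bound by $\tfrac N2\log q$; averaging with the mirror inequality gives $2H(\mathbf f)\le H(\mathbf f|_{\cL_{d-1}})+H(\mathbf f|_{\cL_d})+\sum_v\mathbb E\log(q-|N_{\mathbf f}(v)|)$, and since $H(\mathbf f|_{\cL_{d-1}})+H(\mathbf f|_{\cL_d})$ can be as large as $2H(\mathbf f)$, this is vacuous. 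The correct tool is Shearer's lemma with the neighborhood cover (as in \eqref{case1.f}), which charges $\tfrac1d H(\bbf_{N_u})$ to each $u$. But even then a quantitative obstruction remains: splitting $\tfrac1d H(\bbf_{N_u})=\tfrac1d H(\bbf(N_u))+\tfrac1d H(\bbf_{N_u}\mid\bbf(N_u))$, the palette-entropy term $\tfrac1d H(\bbf(N_u))$ costs $\Theta(1/d)$ per vertex, i.e.\ $\Theta(N/d)$ in total, while your per-flaw deficit is only $\Omega(1)$ per flawed vertex. Comparing with the lower bound $N\log(q/2)$ therefore yields only $\mathbb E[\Phi(\mathbf f)]=O(N/d)$, hence $\mathbb E|X|=O(N/d^2)$, and Markov then gives that the fraction of $f$ with $|X|\ge 2^{-\beta d}N$ is $O(2^{\beta d}/d^2)$ --- worse than the trivial bound $1$. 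No choice of $\alpha,\beta$ rescues this; the ``vanilla'' entropy argument is structurally incapable of producing the exponential gains the lemma asserts.

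What is missing is the self-improvement mechanism that the paper builds in Section 4, and it does use more of $\cB_d$ than regularity and small-set expansion (the paper says so explicitly at the end of Section 2). After rotating $\cB_d$, one finds a large partial matching $\{vv':v\in V^*\}$ with $|V^*|=\tfrac{d}{2n}N$ together with, for each $v$, a vertex $w(v)$ four steps below $v$ in a suitable partial order on levels. Shearer is applied with the sets $M_v\cup M_{v'}$ and $d$ copies of each matched edge, and the palette-entropy cost reappears as $\tfrac1d H(\bar\bbf_{M_v},\bar\bbf_{M_{v'}}\mid\bar\bbf_{N_w})$. A first pass gives $\eps:=\pr(vv'\text{ not ideal})=O(1/d)$; then \Cref{Lem4.2} shows that, conditioned on the palette at $N_w$, the palettes at $M_v,M_{v'}$ are determined except with probability $O(\eps)$, so this conditional entropy is $O(\eps\log(1/\eps))$ rather than $O(1)$. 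Feeding that back yields $\eps\le O(d^3/N)+O(1/d)\cdot O(\eps\log(1/\eps))$, which solves to $\eps=2^{-\Omega(d)}$. Finally, the single-edge statement is globalized not by a plurality vote but by propagating ``ideal'' edges along shortest paths: if every edge of a path from $u$ to $v$ is ideal, then $v$ agrees with the partition $(f(N_u),\cQ\setminus f(N_u))$, and a union bound over a path of length $\le n$ plus Markov gives the lemma. Your proposal would need both the bootstrap (to beat the $O(N/d)$ barrier) and some substitute for the matching-plus-levels structure before it could close.
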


For $f$ and $(A,B)$ as in Lemma \ref{lem.step1}, call $(A,B)$ the \textit{ground state} of $f$. Note that the rhs of \eqref{eq.step1} can be very large (since $N =2{n \choose d}$), so Lemma \ref{lem.step1} by itself is not enough to give the level of precision as in Theorems \ref{realMT}-\ref{notMT}, and therefore requires a refining process in the next step.

For a principal $(A,B)$ and $S \sub V$, define
\beq{def.chi}\chi_{A,B}(S)=\{f \in C_q(\cB_d): X_{A,B}(f) = S\}.\enq
For $g \in \mathbb N$, let 
\[\cH(g)=\{X \sub V: X \mbox{ is 2-linked}, |N(X)|=g\}\]
where $N(X)=\{y \in V: y \text{ is adjacent to some } x \in X\}$.
\begin{lem}[Step 2: $f$'s with large flaws are negligeable]\label{lem.step2}
Let $\beta'>0$ be such that $d2^{-\beta d}\le 2^{-\beta' d}$ for $\beta$ as in \Cref{lem.step1} (and $d$ large enough). 
For any principal partition $(A, B)$ and $g \in [d^{10}, 2^{-\beta' d}N]$,
\beq{eq.step2} \sum_{X \in \cH(g)} \chi_{A,B}(X) =(q/2)^{N}2^{-\gO(g/\log ^2d)}.\enq
\end{lem}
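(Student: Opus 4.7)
The plan is to prove the lemma by combining two bounds: a per-set estimate of $|\chi_{A,B}(X)|$ via an entropy argument, and a Sapozhenko-style container count of $|\cH(g)|$.

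First I would fix a principal $(A,B)$ and a 2-linked $X$ with $|N(X)|=g$, and estimate $|\chi_{A,B}(X)|$ directly. For any $f \in \chi_{A,B}(X)$: every $v \in V \setminus X$ receives a color from its ground-state palette ($A$ on $\cL_d$, $B$ on $\cL_{d-1}$, each of size $q/2$); every $v \in X$ receives a color from the opposite palette (also size $q/2$); and every $u \in N(X) \setminus X$ is additionally constrained, since $u$ lies in its ground-state palette while its $X$-neighbors are colored from the same palette (as they disagree with the ground state), so $f_u$ must avoid those colors. A Shearer-type entropy inequality (or direct edge counting) then produces a constant-factor saving over the $(q/2)^N$ baseline for each such $u$. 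Combining this with the expansion of $\cB_d$ established in \cite{BGL}, which guarantees $|N(X) \setminus X| = \Omega(g)$ for a 2-linked $X$ with $|N(X)| = g \ge d^{10}$, yields a per-set bound of the form
\[
|\chi_{A,B}(X)| \;\le\; (q/2)^N \cdot 2^{-\Omega(g/\mathrm{polylog}(d))}.
\]

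Next, to bound $|\cH(g)|$, I would adapt Sapozhenko's graph container method to $\cB_d$. The plan is to encode each 2-linked $X$ with $|N(X)|=g$ by a pair $(F,S)$, where $F \subseteq X$ is a small skeleton of size at most $O(g\log d/d)$ and $S$ is a set approximating $N(X)$ from above. Using only the isoperimetric properties of $\cB_d$ and the 2-linked condition, one enumerates the possible $(F,S)$'s and shows $|\cH(g)| \le 2^{O(g\log^2 d/d)}$. Multiplying the two bounds and using $\log^2 d / d = o(1)$ to balance the exponents gives
\[
\sum_{X \in \cH(g)} |\chi_{A,B}(X)| \;\le\; (q/2)^N \cdot 2^{-\Omega(g/\log^2 d)},
\]
as claimed.

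The hardest step will be the container count. The corresponding bound for $Q_n$ in \cite{JK} relies on a special vertex partition of $V(Q_n)$ (\cite[Lemma 8.1]{JK}) whose analogue in $\cB_d$ is unclear, and this is exactly the obstacle flagged in \Cref{sec:overview}. Overcoming it requires a container framework that uses only the expansion of the host graph rather than any explicit partition, which is the main novelty I would have to implement. A secondary subtlety is the per-$X$ entropy bound in the bipartite setting: because $X$ can contain adjacent vertices on opposite layers, $X \cap N(X)$ need not be empty, and one must organize the entropy decomposition so that each vertex contributes its saving exactly once and the saving really comes from the $N(X)\setminus X$ constraints. The hypothesis $g \ge d^{10}$ is used in both steps to ensure the expansion estimates of $\cB_d$ apply with room to spare.
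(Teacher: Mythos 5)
Your overall architecture --- bound $|\chi_{A,B}(X)|$ for each individual $X$ by an entropy/edge-counting argument, bound $|\cH(g)|$ by containers, and multiply --- is not the paper's argument, and it has a fatal gap: the claimed estimate $|\cH(g)|\le 2^{O(g\log^2 d/d)}$ is false. Sapozhenko's method does not count the sets $X$ themselves; \Cref{lem:contain1} and \Cref{lem:container} bound only the number of \emph{approximators} (the skeletons $Y$, or the pairs $(F,S)$), and each approximator is compatible with exponentially many members of $\cH(g)$. In fact $|\cH(g)|=2^{\Theta(g)}$ in the upper range of $g$: take a closed $2$-linked $[X_0]\subseteq\cL_{d-1}$ with $|N([X_0])|=g$ and $|[X_0]|=a$, fix a small sub-cover $X_1\subseteq[X_0]$ of $N([X_0])$ with $|X_1|=O(g\log d/d)$ (\Cref{cor:cover}), and note that every $X$ with $X_1\subseteq X\subseteq[X_0]$ has $N(X)=N([X_0])$; this already gives $2^{(1-o(1))a}$ sets, and near-extremal sets for Kruskal--Katona have $a=(1-o(1))g/(1+c)$ where $c=c(\beta')>0$ can be small. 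Against this, the best per-set saving available is $|\chi_{A,B}(X)|\le (q/2)^N(1-2/q)^{|\partial X|}$ with $|\partial X|\le g$, so the union bound costs at least $2^{\,g\left[\frac{1}{1+c}-\log\frac{q}{q-2}\right]}$ relative to $(q/2)^N$, which is $2^{+\Omega(g)}$ already for $q\ge 6$ once $c$ is small. This entropy-of-choosing-$X$ versus coloring-saving tension is precisely the obstruction that the container method is designed to circumvent, and no decomposition of the form (per-set bound)$\times$(count of sets) can succeed for general even $q$.

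What the paper does instead is bound, for each fixed pair $(F,S)\in\cU(g)$ (of which there are only $2^{o(g/\log^2 d)}$), the number of colorings $f$ with $X_{A,B}(f)\in\cI_{F,S}$ \emph{all at once} --- that is, the union over every $X$ compatible with $(F,S)$; see \eqref{qtyFinalBdown} and the definition of $\cF_{F,S}(g_1)$ in \eqref{f.chosen}. The cost of identifying $X$ is thereby absorbed into the entropy of the random coloring (the coloring determines its own flaw) rather than paid as a separate counting factor, and the savings are extracted from the data $(F,S)$ in three different ways depending on the gaps $g_1-f_1$ and $f_1-s_1$, i.e.\ on how well $(F,S)$ pins down $N(X_1)$ (Cases 1--3 of \Cref{subsec.step2.pf}). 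Your secondary observations --- that $X$ meets both layers so $X\cap N(X)$ may be nonempty, and that expansion enters through $g\ge d^{10}$ --- are correct but minor (the paper handles the first by splitting into $X_1,X_2$ and symmetrizing on $|N(X_1)|\ge|N(X_2)|$). The missing idea is that the enumeration over $X$ compatible with a fixed container must happen \emph{inside} the entropy computation, not outside it.
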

The lower bound $d^{10}$ on $g$ is just a convenient choice and not at all optimal.
With Step 2, we reduce our problem to taking care of $f$'s with a manageable size of flaws (more specifically, a polynomial size). And the contributions from those small flaws can be easily handled using trivial bounds.

As the final step, we use the polymer model and cluster expansion method developed in \cite{JP} to obtain detailed information about the structure of $C_q(\cB_d)$ as in Theorems \ref{realMT}-\ref{notMT}. The heart of this step is verifying the convergence condition (see~Theorem~\ref{KPconv}) for the cluster expansion of the appropriately defined polymer model, in which \Cref{lem.step2} plays the crucial role.

We point out that the assumption that $q$ is even is only required for Lemma~\ref{lem.step1} (see Remark \ref{stark}), hence the result analogous to \Cref{lem.step1} for odd $q$ would enable us to extend Theorems \ref{realMT}-\ref{notMT} for any integer $q$. It is very natural to ask whether the odd $q$ case could be handled by additional ideas or  different arguments, or if the situation is fundamentally different for even and odd numbers of colors. 
We also note that the particular structure of $\mathcal{B}_d$ is used most heavily in Step 1, where our approach relies on the existence of a large matching with desirable structural properties. In contrast, in Steps 2 and 3, we primarily use the expansion properties of $\mathcal{B}_d$. We believe that it may be relatively straightforward to generalize these results to other bipartite graphs with good expansion. 
\vspace{5pt}

\noindent\textit{Organization.} Section \ref{sec.prelim} collects various preliminary materials. Lemmas \ref{lem.step1} and \ref{lem.step2} are proved in Sections~\ref{sec.step1} and \ref{sec.step2}, respectively. In Section \ref{sec.polymer models} we construct polymer models on $\cB_d$ and prove some important properties. 
The main theorems, \Cref{realMT} and \Cref{realMT2} are proved in \Cref{sec.realMT} and \Cref{sec.realMT2}, respectively.
Before we close this section, we introduce more definitions and notation.\vspace{5pt}

\nin \textit{More definitions.} For $f \in C_q(\cB_d)$, $f_U$ denotes the restriction of $f$ to $U$, and $f(U)$ denotes the color set $\{f_v: v \in U\}$. For $x \in V$, $x_i (\in \{0,1\})$ denotes the $i$-th coordinate in $x$. By the \textit{Hamming weight} of $x$, denoted $|x|$, we mean $|\{x_i:x_i=1\}|$. As usual $N_x=\{v: v \sim x\}$ where $v \sim x$ means $v$ and $x$ are adjacent, and then $N(U)=\cup_{u \in U} N_u$, $\partial(U)=N(U) \setminus U$, and $U^+=U \cup N(U)$.  For $U \sub V$, $d_U(v):=|N(v) \cap U|$. For two disjoint subsets $W_1,W_2$ of $V$, $\nabla(W_1,W_2):=\{(w_1,w_2) \in E:w_1 \in W_1, w_2 \in W_2\}$. 
Again, $\log$ means $\log_2$, and $\ln$ is the natural logarithm.

\section{Preliminaries}\label{sec.prelim}

\subsection{Basics}

Recall that a \textit{composition} of an integer $n$ is an ordered sequence $\langle a_1,\ldots\rangle $ of positive integers summing to $n$. The $a_i$’s are the \textit{part}s of the composition. Below is a well-known fact.

\begin{prop}\label{prop:decom}
The  number  of  compositions  of $n$ is $2^{n-1}$ and  the  number of compositions with  at  most $b$ parts is $\sum_{i<b}\binom{n-1}{i}<2^{b\log(en/b)}$, when $b< n/2$.
\end{prop}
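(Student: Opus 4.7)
The proposition is a standard combinatorial fact, so the plan is entirely routine. My approach would go through the classical bijection between compositions and subsets of gaps.

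First, I would set up the bijection: a composition of $n$ with $k$ parts $\langle a_1,\ldots,a_k\rangle$ corresponds bijectively to a choice of $k-1$ "dividers" from the $n-1$ gaps between the symbols in a row of $n$ dots. This gives $\binom{n-1}{k-1}$ compositions with exactly $k$ parts. Summing over all $k \in \{1,\ldots,n\}$ yields $\sum_{k=1}^{n}\binom{n-1}{k-1}=2^{n-1}$, proving the first claim. For the second claim, summing over $k\in\{1,\ldots,b\}$ and reindexing $i=k-1$ gives exactly $\sum_{i<b}\binom{n-1}{i}$.

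Next I would bound this partial sum. Using the standard inequality $\sum_{i=0}^{k}\binom{m}{i}\le (em/k)^k$ valid for $1\le k\le m$, with $m=n-1$ and $k=b-1$, I get
\[
\sum_{i<b}\binom{n-1}{i}\le\left(\frac{e(n-1)}{b-1}\right)^{b-1}\le \left(\frac{en}{b-1}\right)^{b-1}.
\]
To convert this to the slightly cleaner form $(en/b)^b=2^{b\log(en/b)}$, I would compare the two expressions: the ratio $(en/b)^b/(en/(b-1))^{b-1}$ equals $(en/b)\cdot(1-1/b)^{b-1}\ge (en/b)\cdot e^{-1}=n/b$, which is at least $2$ under the hypothesis $b<n/2$. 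So $(en/b)^b$ dominates and the desired strict inequality follows.

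There is really no obstacle here — the only mildly fiddly part is passing from the bound with exponent $b-1$ to the stated bound with exponent $b$, and that is handled in one line using $b<n/2$. One could alternatively give a direct proof via the bound $\binom{n-1}{i}\le\binom{n-1}{b-1}$ for all $i\le b-1$ (valid since $b-1<(n-1)/2$), yielding $\sum_{i<b}\binom{n-1}{i}\le b\binom{n-1}{b-1}$, and then applying $\binom{n-1}{b-1}\le (en/b)^{b-1}/e$ or similar; I would use whichever is cleanest in the write-up.
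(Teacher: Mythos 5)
The paper states this proposition as a well-known fact and supplies no proof at all, so there is nothing to compare against; your argument is the standard one and it is correct. The divider bijection gives both counts, and since $\log=\log_2$ in this paper we have $2^{b\log(en/b)}=(en/b)^b$, so your chain $\sum_{i<b}\binom{n-1}{i}\le\bigl(e(n-1)/(b-1)\bigr)^{b-1}\le\bigl(en/(b-1)\bigr)^{b-1}$ followed by the ratio computation $(en/b)^b/\bigl(en/(b-1)\bigr)^{b-1}=(en/b)(1-1/b)^{b-1}\ge n/b>2$ does yield the strict inequality. Two small remarks. First, the case $b=1$ is degenerate in your main chain (the exponent $b-1$ vanishes and the tail bound $\sum_{i\le k}\binom{m}{i}\le(em/k)^k$ needs $k\ge1$); it should be dispatched separately, though it is trivial since the sum is $1<en$. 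Second, the ``alternative'' you sketch at the end does not actually close under the stated hypothesis: bounding the sum by $b\binom{n-1}{b-1}$ costs a factor $b$, whereas your main route only costs the factor $\bigl(en/(b-1)\bigr)^{b-1}/(en/b)^{b-1}\le e$, so the alternative would require something like $n\gtrsim b^2$ rather than $n>2b$ to beat $(en/b)^b$ (and the intermediate bound $\binom{n-1}{b-1}\le(en/b)^{b-1}/e$ you propose there is itself false in general). Stick with the first argument in the write-up.
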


For $X,Y \sub V(G)$, we say that $Y$ \textit{covers} $X$ if $X \sub N(Y)$ and that $Y$ \textit{mutually covers} $X$ if $X \sub N(Y)$ and $Y \sub N(X)$. Observe that if $Y$ covers $X$ then there is $Y' \sub Y$ that mutually covers $X$.
We will use the following lemma, a special case of a fundamental result of Lov\'{a}sz~\cite{lovasz1975ratio} and Stein~\cite{stein1974two}. 

\begin{lem}\label{lem:cover}
Let $G$ be a bipartite graph with bipartition $P\cup Q$, where $|N(u)|\geq a$ for each $u \in P$ and $|N(v)|\leq b$ for each $v\in Q$. Then there exists some $Q'\subseteq Q$ that covers $P$ and satisfies
\[
|Q'|\leq \frac{|Q|}{a}\cdot (1 + \ln b).
\]
\end{lem}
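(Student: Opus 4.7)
The plan is to use the classical greedy algorithm for set cover, applied to the cover problem in which each $v \in Q$ is identified with the ``set'' $N(v) \cap P$. At each step I would pick a vertex $v \in Q$ that covers the maximum number of currently uncovered vertices in $P$, add it to $Q'$, and delete its newly covered neighbors from $P$; the analysis then splits naturally into two phases.

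The key averaging step is as follows. Let $P_t \subseteq P$ denote the set of vertices still uncovered after $t$ iterations. Since every $u \in P_t$ retains all of its original neighbors in $Q$, the number of edges between $P_t$ and $Q$ is at least $a|P_t|$, so some $v \in Q$ achieves $|N(v)\cap P_t| \geq a|P_t|/|Q|$. Hence the greedy choice gives the recursion
\[
|P_{t+1}| \leq |P_t|\left(1 - \frac{a}{|Q|}\right),
\]
which iterates to $|P_t| \leq |P|\, e^{-at/|Q|}$. Double-counting edges using both degree bounds yields $a|P| \leq b|Q|$, so taking $t_1 = \lceil (|Q|/a)\ln b\rceil$ is enough to force $|P_{t_1}| \leq |Q|/a$.

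In the second phase, since each of the at most $|Q|/a$ remaining uncovered vertices has at least $a \geq 1$ neighbors in $Q$, we may simply finish off by picking one covering vertex for each, spending at most $|Q|/a$ more steps. Summing the two phases gives $|Q'| \leq (|Q|/a)(1 + \ln b)$, as required. There is no real obstacle here, as this is the standard Lov\'asz--Stein bound; the only mild subtlety is the two-phase split, which is needed to obtain the ``$+1$'' cleanly because the geometric decay bound becomes vacuous once $|P_t|$ drops below $|Q|/a$.
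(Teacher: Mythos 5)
The paper never proves this lemma --- it is quoted as a special case of the Lov\'asz--Stein covering theorem and used as a black box --- so you are supplying the classical argument, and your greedy strategy is indeed the standard route. The averaging step (each $u\in P_t$ keeps all its $\ge a$ neighbours in $Q$, so some $v\in Q$ hits at least $a|P_t|/|Q|$ uncovered vertices) and the double count $a|P|\le e(P,Q)\le b|Q|$ are both correct, as is the observation that the geometric decay alone cannot finish the job and a second phase is needed.

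The one place where the write-up falls short of the stated inequality is the phase switch. With $t_1=\lceil(|Q|/a)\ln b\rceil$ you get $|Q'|\le t_1+|P_{t_1}|\le (|Q|/a)\ln b+1+|Q|/a$, i.e.\ the claimed bound plus an additive term (strictly less than $1$) coming from the ceiling; since $(|Q|/a)(1+\ln b)$ need not be an integer, integrality of $|Q'|$ does not rescue the exact constant. For the paper's purposes this is immaterial --- the lemma is only invoked through \Cref{cor:cover} and then absorbed into the $O(\cdot)$ of \Cref{lem:contain1} --- but if you want the inequality as stated you should replace the two-phase analysis by the usual harmonic charging: when greedy picks a vertex covering $k$ new elements of $P$, charge each of them $1/k$, so that $|Q'|$ equals the total charge. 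For any fixed $v\in Q$, the element of $N(v)\cap P$ that is covered $j$-th from last among $N(v)\cap P$ receives charge at most $1/j$ (at that moment $v$ itself would cover at least $j$ new elements, so the greedy choice does too); hence the charges on $N(v)\cap P$ sum to at most $\sum_{j=1}^{b}1/j\le 1+\ln b$. Summing over $v\in Q$ and using $|N(u)|\ge a$ for $u\in P$ gives $a|Q'|\le |Q|(1+\ln b)$ with no rounding loss.
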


\begin{cor}\label{cor:cover}
For any $X\subseteq V (=V(\cB_d))$, there exists a set $Y\subseteq V$ such that $Y$ mutually covers $X$ and $|Y|\leq \frac{|N(X)|}{d}\cdot (1 + \ln d)$.
\end{cor}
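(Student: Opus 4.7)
The plan is to reduce to \Cref{lem:cover} by exploiting the bipartite, $d$-regular structure of $\cB_d$. The natural first instinct is to take $P = X$ and $Q = N(X)$ directly, but there is a small subtlety: $X$ may contain vertices from both layers $\cL_{d-1}$ and $\cL_d$, in which case $X \cap N(X)$ can be nonempty and the required bipartition of the auxiliary graph breaks down.

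To sidestep this, I would first split $X$ according to the bipartition: set $X_0 := X \cap \cL_{d-1}$ and $X_1 := X \cap \cL_d$. Then $N(X_0) \subseteq \cL_d$ and $N(X_1) \subseteq \cL_{d-1}$, so each $X_i$ is automatically disjoint from $N(X_i)$. For each $i \in \{0,1\}$, I apply \Cref{lem:cover} to the bipartite graph with parts $X_i$ and $N(X_i)$ (edges inherited from $\cB_d$), taking $a = d$ (every $u \in X_i$ has all $d$ of its $\cB_d$-neighbors inside $N(X_i)$ by $d$-regularity) and $b = d$. This produces $Y_i \subseteq N(X_i)$ that covers $X_i$ and satisfies $|Y_i| \leq \frac{|N(X_i)|}{d}(1 + \ln d)$.

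Setting $Y := Y_0 \cup Y_1$ then closes things out. Mutual covering is immediate: $X \subseteq N(Y)$ follows by combining the two sub-covers, and $Y \subseteq N(X)$ follows because each $y \in Y_i \subseteq N(X_i)$ already has a neighbor in $X_i \subseteq X$. For the size bound, since $N(X_0) \subseteq \cL_d$ and $N(X_1) \subseteq \cL_{d-1}$ lie in different layers they are disjoint, so $|N(X_0)| + |N(X_1)| = |N(X)|$, and the two individual bounds sum to $|Y| \leq \frac{|N(X)|}{d}(1 + \ln d)$ as required.

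There is no substantive obstacle here; the corollary is essentially a one-step deduction from \Cref{lem:cover}. The only point needing care is recognizing that splitting $X$ along the bipartition is what legitimately puts us in the hypothesis of \Cref{lem:cover} (as opposed to working with $X$ as a whole), and that $d$-regularity lets both degree parameters be taken equal to $d$.
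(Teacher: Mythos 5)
Your proof is correct and follows essentially the same route as the paper: split $X$ along the bipartition, apply \Cref{lem:cover} to each bipartite graph on $X_i\cup N(X_i)$ with $a=b=d$, and take the union, noting that $N(X_0)$ and $N(X_1)$ lie in different layers so their sizes add to $|N(X)|$. Your observation that $Y_i\subseteq N(X_i)$ already forces the ``mutual'' half of the covering is a clean way to handle the point the paper addresses via its earlier remark that any cover contains a mutual cover.
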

\begin{proof}
Let $X_1=X\cap\cL_{d-1}$ and $X_2=X\cap\cL_{d}$. By applying \Cref{lem:cover} for $X_1\cup N(X_1)$ and  $X_2\cup N(X_2)$
separately, we get mutual covers $Y_1$ and $Y_2$ of $X_1$ and $X_2$. Observe that $Y:=Y_1\cup Y_2$ is a mutual cover of $X$ and 
\[
|Y|=|Y_1|+|Y_2|\leq \frac{|N(X_1)|}{d}\cdot (1 + \ln d) + \frac{|N(X_2)|}{d}\cdot (1 + \ln d)=\frac{|N(X)|}{d}\cdot (1 + \ln d).
\]
\end{proof}

The next lemma is used to bound the numbers of certain types of 2-linked sets in $V(\cB_d)$.
It follows from the fact (see e.g. \cite[p.\ 396, Ex.11]{Knuth}) that the
infinite $\Delta$-branching rooted tree contains precisely
\[\frac{{{\Delta n} \choose n}}{(\Delta-1)n+1} \le (e\Delta)^{n-1}\]
rooted subtrees with $n$ vertices.

\begin{lem}\label{lem:numlinkset}
If $G$ is a graph with maximum degree $\Delta$, then the number of $n$-vertex
subsets of $V(G)$ which contain a fixed vertex and induce a connected subgraph is at most 
\beq{eq:numlinkset}(e\Delta)^{n-1}.\enq
\end{lem}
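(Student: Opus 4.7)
The plan is to construct an explicit injection from the family of $n$-vertex connected subsets of $V(G)$ containing the fixed vertex $v_0$ into the set of $n$-vertex rooted subtrees of the infinite $\Delta$-branching rooted tree $\mathcal{T}_\Delta$ (the tree in which every vertex has exactly $\Delta$ children, which we label by $1,\ldots,\Delta$), after which the bound $(e\Delta)^{n-1}$ is immediate from the Knuth formula quoted just above the statement.

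To set up the injection, first fix a linear order $\prec$ on $V(G)$; this induces, at each $u \in V(G)$, a canonical ordering of the neighbors of $u$ in $G$, indexed by a subset of $\{1,\ldots,\Delta\}$. For any valid $n$-vertex $S$, let $T_S$ denote the BFS spanning tree of $G[S]$ rooted at $v_0$, with ties broken by $\prec$; since this choice is canonical, $T_S$ is uniquely determined by $S$. Now embed $T_S$ into $\mathcal{T}_\Delta$ recursively: send $v_0$ to the root of $\mathcal{T}_\Delta$, and if $u \in V(T_S)$ has been sent to $\tilde u \in V(\mathcal{T}_\Delta)$, send each child $w$ of $u$ in $T_S$ to the $p$-th child of $\tilde u$ in $\mathcal{T}_\Delta$, where $p$ is the index of $w$ in the fixed neighbor ordering of $u$. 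The image $\tilde T_S$ is an $n$-vertex rooted subtree of $\mathcal{T}_\Delta$ containing the root.

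It then remains to verify injectivity of the map $S \mapsto \tilde T_S$: from $\tilde T_S$ one recognizes $v_0$ as the root, and the position labels of the children of each $\tilde u$ in $\tilde T_S$ identify precisely which neighbors of $u$ in $G$ belong to $T_S$, so $S$ can be reconstructed by a straightforward traversal of $\tilde T_S$. Applying the Knuth bound on the number of $n$-vertex rooted subtrees of $\mathcal{T}_\Delta$ that contain the root then yields the lemma. The argument is essentially bookkeeping; the only point requiring any care is verifying that the canonical choice of $T_S$ makes $S \mapsto \tilde T_S$ single-valued and that the recursive embedding respects the labeling of children, so I do not anticipate a substantive obstacle.
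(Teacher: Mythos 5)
Your proposal is correct and is exactly the argument the paper intends: the paper offers no written proof beyond citing the Knuth count of $n$-vertex rooted subtrees of the infinite $\Delta$-branching tree, and your canonical-spanning-tree embedding (with injectivity via reconstruction from the child-position labels) is the standard way to reduce the lemma to that count. No gaps.
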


\subsection{Isoperimetry}

We use isoperimetric inequalities on $\cB_d$, which can be easily derived from direct applications of the Lov\'asz version of the Kruskal-Katona Theorem \cite{Katona, Kruskal}. Recall that $N/2=|\cL_d|=|\cL_{d-1}|$.

\begin{thm}[Lov\'asz \cite{Lovasz}]\label{thm.Lovasz} Let $\cA$ be a family of $m$-element subsets of a fixed set $U$ and $\cB$ be the family of all $(m-q)$-element subsets of the sets in $\cA$. If $|\cA|={x \choose m}$ for some real number $x$, then $|\cB| \ge {x \choose m-q}$.
\end{thm}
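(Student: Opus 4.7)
The plan is to prove the statement by reducing to the single-shadow case $q=1$ and then iterating. Write $\partial \mathcal{A} := \{B \subset U : |B|=m-1,\, B \subset A \text{ for some } A \in \mathcal{A}\}$. The general $\mathcal{B}$ in the statement is contained in (in fact equals) the $q$-fold iterated shadow $\partial^q \mathcal{A}$. Thus if one can establish the base inequality
\[
|\mathcal{A}| \ge \binom{x}{m} \;\Longrightarrow\; |\partial \mathcal{A}| \ge \binom{x}{m-1},
\]
then, since $\partial \mathcal{A}$ is itself a family of $(m-1)$-sets with $|\partial\mathcal{A}|\ge \binom{x}{m-1}$, applying the same inequality again gives $|\partial^2 \mathcal{A}|\ge \binom{x}{m-2}$, and after $q$ iterations $|\mathcal{B}| \ge |\partial^q \mathcal{A}| \ge \binom{x}{m-q}$, as desired.

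For the $q=1$ base case I would use the classical compression/shifting argument. For $i<j$ in $U$, let $S_{ij}(\mathcal{A})$ be obtained from $\mathcal{A}$ by replacing each $A$ with $j\in A,\ i\notin A$ by $(A\setminus\{j\})\cup\{i\}$ whenever the latter is not already in $\mathcal{A}$. One checks directly that $S_{ij}$ preserves $|\mathcal{A}|$ and does not increase $|\partial \mathcal{A}|$: any shadow element either already existed or can be obtained from a shadow of the original family. Repeatedly applying shifts for all pairs in some fixed order terminates in a shifted (left-compressed) family $\mathcal{A}^*$ with $|\mathcal{A}^*|=|\mathcal{A}|$ and $|\partial \mathcal{A}^*|\le |\partial \mathcal{A}|$. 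So it suffices to prove the bound for shifted families.

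For a shifted family, I would induct on $|U|=n$. Let $n$ denote the largest element of $U$ and split $\mathcal{A}=\mathcal{A}_0\sqcup \mathcal{A}_1^+$, where $\mathcal{A}_0=\{A\in\mathcal{A}: n\notin A\}$ and $\mathcal{A}_1=\{A\setminus\{n\}: n\in A\in\mathcal{A}\}$. Because $\mathcal{A}$ is shifted, $\mathcal{A}_1 \subseteq \mathcal{A}_0 \cup \partial\mathcal{A}_0$ and, more importantly, $\partial\mathcal{A}$ decomposes (as a disjoint union) into a part inside $U\setminus\{n\}$ equal to $\partial\mathcal{A}_0 \cup \mathcal{A}_1$ and a part containing $n$ in bijection with $\partial\mathcal{A}_1$, giving
\[
|\partial\mathcal{A}| \;\ge\; |\mathcal{A}_1| + |\partial\mathcal{A}_1|.
\]
Writing $|\mathcal{A}_1| = \binom{y}{m-1}$ for the unique real $y\ge m-1$ and applying the inductive hypothesis to $\mathcal{A}_1$ (a family of $(m-1)$-sets over a smaller ground set) yields $|\partial\mathcal{A}_1|\ge \binom{y}{m-2}$, so $|\partial\mathcal{A}|\ge \binom{y}{m-1}+\binom{y}{m-2}=\binom{y+1}{m-1}$. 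A short calculation using $|\mathcal{A}|=|\mathcal{A}_0|+|\mathcal{A}_1|\ge \binom{y+1}{m}$ (where the inequality $|\mathcal{A}_0| \ge \binom{y}{m}$ uses shiftedness together with the fact that $\mathcal{A}_1^+ \subseteq \mathcal{A}_0^+$) plus monotonicity of $t\mapsto \binom{t}{m-1}$ closes the induction, giving $|\partial \mathcal{A}|\ge \binom{x}{m-1}$.

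The main technical obstacle is the bookkeeping with the generalized binomial coefficient $\binom{x}{k}=x(x-1)\cdots(x-k+1)/k!$ for real (possibly non-integer) $x$. One must verify the identity $\binom{y}{m-1}+\binom{y}{m-2}=\binom{y+1}{m-1}$ and the monotonicity and convexity properties for real $x\ge m$ used to pass from the additive inequality on $|\mathcal{A}_0|,|\mathcal{A}_1|$ to the single bound in terms of $x$. Once this is done carefully, the iteration in the first paragraph requires no further work, and the theorem follows.
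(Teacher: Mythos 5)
First, a framing note: the paper does not prove this statement --- it is quoted as a known theorem (the Lov\'asz form of Kruskal--Katona, cited to Lov\'asz's book) --- so I am judging your argument on its own. Your reduction to $q=1$ by iterating the shadow, and the compression step reducing to shifted families, are both standard and fine. The problem is in the inductive step for shifted families, and it is a genuine gap, not bookkeeping. You split on the \emph{largest} element $n$. For a left-compressed family the containment you get is $\mathcal{A}_1\subseteq\partial\mathcal{A}_0$ (replace $n$ by some smaller $i\notin A$), \emph{not} $\partial\mathcal{A}_0\subseteq\mathcal{A}_1$; the latter would require replacing a small element by the larger $n$, which compression does not provide. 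Consequently you cannot control $|\mathcal{A}_0|$ from above in terms of $|\mathcal{A}_1|$, and the bound $|\partial\mathcal{A}|\ge|\mathcal{A}_1|+|\partial\mathcal{A}_1|$ can be vacuous: for $\mathcal{A}=\{\{1,2\}\}$ on $\{1,2,3\}$ (shifted), $\mathcal{A}_1=\emptyset$ and your right-hand side is $0$, while the theorem demands $|\partial\mathcal{A}|\ge\binom{2}{1}=2$. Moreover, the ``short calculation'' at the end runs in the wrong direction: from $|\mathcal{A}|=\binom{x}{m}\ge\binom{y+1}{m}$ you get $x\ge y+1$, hence $\binom{y+1}{m-1}\le\binom{x}{m-1}$, so the bound $|\partial\mathcal{A}|\ge\binom{y+1}{m-1}$ you derived is \emph{weaker} than the target $\binom{x}{m-1}$. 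To close the induction you would need $x\le y+1$, i.e.\ the upper bound $|\mathcal{A}_0|\le\binom{y}{m}$, which is exactly what the wrong-way containment prevents you from proving.

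The repair is Frankl's argument, which splits on the \emph{smallest} element instead: set $\mathcal{A}(1)=\{A\setminus\{1\}:1\in A\in\mathcal{A}\}$ and $\mathcal{A}(\bar1)=\{A\in\mathcal{A}:1\notin A\}$. Left-compression now gives the useful containment $\partial\bigl(\mathcal{A}(\bar1)\bigr)\subseteq\mathcal{A}(1)$. If $|\mathcal{A}(1)|<\binom{x-1}{m-1}$, then $|\mathcal{A}(\bar1)|>\binom{x}{m}-\binom{x-1}{m-1}=\binom{x-1}{m}$, and the inductive hypothesis applied to $\mathcal{A}(\bar1)$ forces $|\mathcal{A}(1)|\ge|\partial(\mathcal{A}(\bar1))|\ge\binom{x-1}{m-1}$, a contradiction; hence $|\mathcal{A}(1)|\ge\binom{x-1}{m-1}$. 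Then
\[
|\partial\mathcal{A}|\;\ge\;|\mathcal{A}(1)|+|\partial(\mathcal{A}(1))|\;\ge\;\binom{x-1}{m-1}+\binom{x-1}{m-2}\;=\;\binom{x}{m-1},
\]
using Pascal's rule (valid for real $x$ with the polynomial definition of $\binom{x}{k}$). With this replacement of your inductive step, the rest of your outline (compression, then $q$-fold iteration using monotonicity of $t\mapsto\binom{t}{k}$ for $t\ge k$) goes through.
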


\begin{prop}\label{lem:isoper}
Let $X \subseteq \mathcal{L}_d$ or $X\subseteq \mathcal{L}_{d-1}$.
\begin{itemize}
\item[\rm (i)] If $|X|\leq d/4$, then $|N(X)|\geq d|X| - |X|^2/2$.

\item[\rm (ii)] If $|X|\leq d^{10}$, then $|N(X)|\geq d|X|/12$.

\item[\rm (iii)] If $|X|=2^{-\gO(d)}N$, then $|N(X)| \ge (1+\gO(1))|X|$.

\end{itemize}
\end{prop}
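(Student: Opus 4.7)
The plan is to derive all three bounds from the Lov\'asz version of Kruskal--Katona (Theorem \ref{thm.Lovasz}). Since $X$ lies entirely in one layer, it can be viewed as either a family of $d$-subsets of $[n]=[2d-1]$ or of $(d-1)$-subsets. In the first case $N(X)\subseteq\cL_{d-1}$ is precisely the lower shadow of $X$; in the second case, taking complements in $[n]$ (which swaps $\cL_{d-1}$ and $\cL_d$) converts $N(X)$ into the lower shadow of a $d$-uniform family of the same cardinality as $X$. So in both situations, writing $|X|=\binom{x}{d}$ for the unique real $x\geq d$, Theorem \ref{thm.Lovasz} delivers the key inequality
\[
|N(X)|\;\geq\;\binom{x}{d-1}\;=\;|X|\cdot\frac{d}{x-d+1}.
\]
Each of (i)--(iii) then reduces to controlling $x$ in terms of $|X|$.

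For (i) I would bypass Kruskal--Katona and argue directly, since this gives a cleaner constant: any two distinct vertices in $\cL_d$ (or in $\cL_{d-1}$) share at most one $\cB_d$-neighbor, because their common neighbors are the $(d-1)$-subsets of their intersection (respectively the $d$-supersets of their union), of which there is at most one. Inclusion--exclusion then yields $|N(X)|\geq d|X|-\binom{|X|}{2}\geq d|X|-|X|^2/2$.

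For (ii), the task is to verify that $|X|\leq d^{10}$ forces $x\leq d+11$. This holds because $\binom{d+11}{d}=\binom{d+11}{11}\geq d^{11}/11!>d^{10}$ for $d$ large, and $\binom{x}{d}$ is strictly increasing in $x$ for $x\geq d$. Plugging $x-d+1\leq 12$ into the key inequality gives $|N(X)|\geq d|X|/12$.

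For (iii), the main step is to convert the exponential smallness $|X|=2^{-\Omega(d)}N$ into a linear-in-$d$ gap between $x$ and $n=2d-1$. A standard Stirling estimate gives $\binom{\alpha d}{d}=\Theta(d^{-1/2})\,2^{g(\alpha)d}$ with $g(\alpha)=\alpha\log\alpha-(\alpha-1)\log(\alpha-1)$ strictly increasing on $(1,\infty)$ and $g(2)=2$. Choosing $\delta>0$ small enough that $2-g(2-\delta)$ is smaller than the constant in the exponent of $|X|/N=2^{-\Omega(d)}$ then yields $|X|\leq\binom{(2-\delta)d}{d}$ for $d$ large, hence $x\leq(2-\delta)d$; the key inequality gives $|N(X)|/|X|\geq d/((1-\delta)d+1)\geq 1+\Omega(1)$. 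The only calculation requiring a little care is this Stirling/entropy estimate in (iii); the other two parts are essentially immediate from the key inequality together with elementary binomial bounds.
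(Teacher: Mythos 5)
Your proof is correct and follows essentially the same route as the paper: part (i) by direct counting based on the fact that two distinct vertices in one layer have at most one common neighbor (the paper's telescoping sum $d+(d-1)+\cdots$ is the same bound), and parts (ii)--(iii) by writing $|X|=\binom{x}{d}$ and applying the Lov\'asz form of Kruskal--Katona to get $|N(X)|\ge |X|\cdot d/(x-d+1)$, then bounding $x$. You simply make explicit two details the paper leaves implicit --- the complementation step reducing the $\cL_{d-1}$ case to a lower shadow, and the Stirling/entropy estimate showing $x\le(2-\delta)d$ in (iii) --- both of which check out.
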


\begin{proof} 
(i)$|N(S)|\geq d + (d-1)+\cdots + (d - |S|+1)=d|S| - |S|(|S|-1)/2$.\\
(ii) Let $|S|=\binom{x}{d}$ for some real $x$. Note that in this case, $x\leq d +11$. By \Cref{thm.Lovasz}, we have 
$|N(S)|\geq \binom{x}{d-1}=\binom{x}{d}\frac{d}{x-d+1}=|S|\frac{d}{x-d+1}\geq d|S|/12$.
\\
(iii) If $|X|=2^{-\gO(d)}N$, then $|X|={x \choose d}$ for some $x=(2-\gO(1))d$. By \Cref{thm.Lovasz}, ${x \choose d-1}/{x \choose d}=1+\gO(1)$.
\end{proof}

\subsection{The Graph Container}\label{subsec.graph container}

This section recalls or proves some variants of the graph container lemma due to Sapozhenko~\cite{Sap87}. An excellent exposition on this topic is given in \cite{GS}. Recall that $\cH(g)=\{X \sub V: X \mbox{ 2-linked}, |N(X)|=g\}$. 

\begin{lem}\label{lem:contain1}
For all $g\in\mathbb{N}$, there exists a family $\mathcal{V}=\mathcal{V}(g)\subseteq 2^V$ with 
\[
|\mathcal{V}|\leq 2^{\gO(g \log^2 d/d + d)},
\]
such that for any $X\in\cH(g)$, $\mathcal{V}$ contains a set that mutually covers $X$.
\end{lem}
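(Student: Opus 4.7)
The plan is to implement a standard Sapozhenko-style container argument: for each $X \in \mathcal{H}(g)$, produce a small mutual cover $Y$ via Corollary~\ref{cor:cover}, observe that $Y$ inherits enough connectivity from $X$ to be enumerated efficiently, and then let $\mathcal{V}$ be the collection of all such potential $Y$'s, bounding its size through Lemma~\ref{lem:numlinkset}.

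First I would apply Corollary~\ref{cor:cover} to each $X \in \mathcal{H}(g)$ to obtain a mutual cover $Y \subseteq V$ with $|Y| \le T := \lceil g(1+\ln d)/d\rceil = O(g\log d /d)$. The key structural observation is that such a $Y$ is automatically $4$-linked in $\mathcal{B}_d$ (i.e.\ $\mathcal{B}_d^{4}[Y]$ is connected): given $y_1, y_2 \in Y$, pick $x_i \in N(y_i)\cap X$ (using $Y \sub N(X)$), connect $x_1$ to $x_2$ inside $X$ via a sequence $x_1 = z_0, z_1, \ldots, z_k = x_2$ with $d_{\mathcal{B}_d}(z_j, z_{j+1}) \le 2$ (using $2$-linkedness of $X$), and for each $z_j$ pick some $y'_j \in N(z_j)\cap Y$ (using $X \sub N(Y)$). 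Consecutive $y'_j$ and $y'_{j+1}$ then satisfy $d_{\mathcal{B}_d}(y'_j, y'_{j+1}) \le 1 + 2 + 1 = 4$, so $y_1, y'_0, y'_1, \ldots, y'_k, y_2$ is a path in $\mathcal{B}_d^{4}$ staying inside $Y$.

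Next I would define $\mathcal{V}$ to be the collection of all $4$-linked subsets of $V$ of size at most $T$. Since $\mathcal{B}_d$ is $d$-regular, counting walks of length at most $4$ shows that $\mathcal{B}_d^{4}$ has maximum degree $\Delta \le O(d^4)$. Applying Lemma~\ref{lem:numlinkset} to $\mathcal{B}_d^{4}$, the number of $t$-vertex $4$-linked subsets containing a fixed root vertex is at most $(e\Delta)^{t-1} = 2^{O(t\log d)}$. Summing over $t \in \{1,\ldots,T\}$ and multiplying by the $N \le 2^{2d}$ possible root vertices yields
\[
|\mathcal{V}| \;\le\; N\,\sum_{t=1}^{T} 2^{O(t\log d)} \;\le\; 2^{O(d + T\log d)} \;=\; 2^{O(d + g\log^2 d/d)},
\]
which matches the target bound. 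By construction, the $Y$ produced in the first step lies in $\mathcal{V}$ and mutually covers $X$, so the family $\mathcal{V}$ has the required covering property.

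The only non-mechanical ingredient in this plan is the passage from $2$-linkedness of $X$ to $4$-linkedness of $Y$; I expect that to be the one step requiring genuine attention. Everything else — the max-degree estimate for $\mathcal{B}_d^{4}$, the invocation of the tree-branching bound in Lemma~\ref{lem:numlinkset}, and the geometric-sum computation — is routine bookkeeping.
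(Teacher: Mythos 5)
Your proposal is correct and follows essentially the same route as the paper's proof: obtain a small mutual cover via Corollary~\ref{cor:cover}, note that it is $4$-linked, take $\mathcal{V}$ to be all $4$-linked sets of the appropriate size, and count them with Lemma~\ref{lem:numlinkset}. The only difference is that you spell out the $4$-linkedness of $Y$ in detail, which the paper states as an unproved observation; your argument for it is valid.
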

\begin{proof}
Given a set $X\in\cH(g)$, by \Cref{cor:cover}, there exists a  set $Y\subseteq V$ such that $Y$ mutually covers $X$ and $|Y|\leq \frac{|N(X)|}{d}\cdot (1 + \ln d)=\frac{g}{d}\cdot (1 + \ln d)$. Moreover, observe that $Y$ is 4-linked (i.e. $(\cB_d)^4[Y]$ is connected where $(\cB_d)^4$ is the fourth power of $\cB_d$) as $X$ is 2-linked and $Y$ mutually covers $X$.
We therefore take $\mathcal{V}$ to be the collection of all $4$-linked subsets of $V$ of size at most $\frac{g}{d}\cdot (1 + \ln d)$.
Then by \Cref{lem:numlinkset},
\[
|\mathcal{V}|\leq N\sum_{\ell\leq\frac{g}{d}\cdot (1 + \ln d)}(ed^4)^{\ell}=2^{O(g\log^2 d/d + d)}.
\]
\end{proof}

Let $\psi=\psi(d)>0$. Following \cite{GS, JK}, define a \textit{$\psi$-approximating pair} for $X \sub V$ to be a pair $(F,S) \in 2^V \times 2^V$ satisfying
\beq{approx1} F \sub N(X), S \supseteq X;\enq
\beq{approx2} d_{V \setminus F} (u) \le \psi \quad \forall u \in S;\enq
and
\beq{approx3} d_S(v) \le \psi \quad \forall v \in V \setminus F.\enq

\begin{lem}\label{lem:contain2}
Let $\psi\leq d/2$. For each $Y\subseteq V$ there exists a family $\mathcal{W}=\mathcal{W}(Y, g)\subseteq 2^V\times 2^V$ with 
\[
|\mathcal{W}|\leq 2^{O(g \log d/\psi)}
\]
such that any $X\subseteq V$ mutually covered by $Y$ and $|N(X)|=g$ has a $\psi$-approximating pair in $\mathcal{W}$.
\end{lem}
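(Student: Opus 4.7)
I plan to construct $\mathcal{W}$ by an iterative refinement in the spirit of Sapozhenko's container lemma \cite{Sap87} (and the variants in \cite{GS, JK}). For each $X$ mutually covered by $Y$ with $|N(X)|=g$, a deterministic algorithm will produce the associated approximating pair $(F(X), S(X)) \in \mathcal{W}$ using $Y$ together with a short auxiliary input, and $|\mathcal{W}|$ is then bounded by the number of possible auxiliary inputs.

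First I would initialize $(F_0, S_0) = (Y, N(Y))$; mutual cover gives $F_0 = Y \sub N(X)$ and $S_0 = N(Y) \supseteq X$, so \eqref{approx1} holds at the start, and the iteration will be designed to preserve these containments. At each step the algorithm scans in a canonical (say lexicographic) order for the first \emph{violator}: a vertex $u \in S_{t-1}$ with $d_{V\sm F_{t-1}}(u) > \psi$ violating \eqref{approx2}, or a vertex $v \in V\sm F_{t-1}$ with $d_{S_{t-1}}(v) > \psi$ violating \eqref{approx3}. It then resolves the violation by one of two local ``bulk'' moves selected by a single auxiliary bit: either add all $>\psi$ offending neighbors of the violator to $F$ (safe when the violator lies in $X$, since then its neighbors lie in $N(X)$), or remove the $\ge \psi$ offending vertices from $S$ (safe when the violator lies outside $X$ or $N(X)$, since then the offending vertices are certified to be outside $X$). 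Because the violator and its offending neighbors are determined canonically from $(F_{t-1}, S_{t-1})$, the only data recorded per iteration is the branch bit.

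The decisive estimate is that the algorithm terminates in $O(g\log d /\psi)$ iterations. Each bulk iteration either grows $|F|$ by at least $\psi$ (add branch) or shrinks $|S|$ by at least $\psi$ (remove branch). Since $|F| \le |N(X)| = g$ throughout and $|S_0| = |N(Y)| \le d|Y| = O(g\log d)$ by \Cref{cor:cover}, there can be at most $O(g/\psi)$ add steps and $O(g\log d /\psi)$ remove steps, for $O(g\log d /\psi)$ total. Combined with the $O(1)$ bits per iteration, this gives $|\mathcal{W}| \le 2^{O(g\log d /\psi)}$, as desired.

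The main obstacle will be arranging every refinement to genuinely make $\psi$-progress. The two ``awkward'' cases --- a violator $u \in S \sm X$ with $d_{V\sm F}(u) > \psi$, or a violator $v \in N(X) \sm F$ with $d_S(v) > \psi$ --- do not admit a clean bulk move of size $\psi$ on their own, since the natural local operation (remove $u$, or add $v$) changes $|S|$ or $|F|$ by only one. These cases must be either deferred and collected into combined bulk steps once sufficiently many have accumulated, or eliminated by a pre-processing pass that exploits the expansion of $\cB_d$ from \Cref{lem:isoper} to bound the number of such residual singletons. This is the technical heart of Sapozhenko-type container arguments and is precisely where the expansion (rather than the global symmetries) of the host graph matters, which is why the approach should also go through for broader classes of bipartite graphs with comparable expansion profiles.
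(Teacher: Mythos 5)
The paper omits this proof entirely (deferring to \cite[Lemma 7.5]{JK}), so I am comparing your proposal against the standard Sapozhenko-type argument. Your skeleton --- iterative refinement by bulk moves of size $\psi$, with a transcript of $O(g\log d/\psi)$ bits --- is the right one, but the two ``awkward cases'' you flag and leave open are not a deferrable technicality: as written, the scheme is neither reconstructible nor terminating within the claimed budget. First, reconstruction: you claim one branch bit per iteration suffices because the violator is ``determined canonically from $(F_{t-1},S_{t-1})$.'' But the first violator of \eqref{approx2} in $S_{t-1}$ need not lie in $X$, and for such a violator no safe $\psi$-bulk move exists; the only safe repair (delete that single vertex from $S$) makes unit progress, so you may spend up to $|N(Y)|$ iterations --- far more than $O(g\log d/\psi)$ bits --- before ever reaching a violator in $X$. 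If instead you restrict attention to violators lying in $X$, their identities are no longer determined by $(F_{t-1},S_{t-1})$ and must be recorded explicitly. Second, the violators $v\in N(X)\setminus F$ of \eqref{approx3} genuinely cannot be handled one at a time: there can be up to order $dg/\psi$ heavy vertices, and neither ``accumulating singletons into combined bulk steps'' nor an expansion-based preprocessing obviously repairs this; expansion is in fact not used in this lemma at all.

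The actual proof resolves both points by reorganizing the algorithm. Stage 1: repeatedly pick the first $u\in X$ with $d_{V\setminus F}(u)>\psi$ and set $F:=F\cup N(u)$; record the \emph{set} $U$ of chosen vertices. Each step adds more than $\psi$ new elements to $F\subseteq N(X)$, so $|U|\le g/\psi$; since $U\subseteq X\subseteq N(Y)$ and $|N(Y)|\le d|Y|\le dg$ (mutual covering gives $Y\subseteq N(X)$), there are at most $\binom{dg}{\le g/\psi}=2^{O(g\log d/\psi)}$ possibilities for $U$, and $F'=\bigcup_{u\in U}N(u)$ is reconstructible from $U$. Then \emph{define} $S_0:=\{u: d_{V\setminus F'}(u)\le\psi\}$: this is determined by $F'$, contains $X$, satisfies \eqref{approx2} by fiat --- so violators of \eqref{approx2} outside $X$ never need to be touched --- and has $|S_0|\le 2g$ since each of its elements has at least $d-\psi\ge d/2$ neighbours in $F'$. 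Stage 2: repeatedly pick the first $v\notin N(X)$ with $d_S(v)>\psi$ and set $S:=S\setminus N(v)$ (safe, as $N(v)\cap X=\emptyset$); each step deletes more than $\psi$ vertices of $S_0$, so there are at most $2g/\psi$ steps, and the chosen $v$'s form a set of that size inside $N(S_0)$, again $2^{O(g\log d/\psi)}$ possibilities. The step you are missing is the endgame: once Stage 2 terminates, \emph{every} remaining $v$ with $d_S(v)>\psi$ automatically lies in $N(X)$, so the whole set $\{v: d_S(v)>\psi\}$ --- determined by $S$, hence free to record --- can be added to $F$ in one shot without violating \eqref{approx1}. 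This ordering of the operations, not expansion, is what eliminates your second awkward case.
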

\nin The proof of Lemma~\ref{lem:contain2} is almost identical to the proof of \cite[Lemma 7.5]{JK}, so we omit.
\begin{lem}\label{lem:container}
Let $\psi \le d/2$. There is a family $\cU=\cU(g) \sub 2^V \times 2^V$ with
\[|\cU|\le 2^{O(g\log^2 d/d + g\log d/\psi+d)}\]
such that every $X \in \cH(g)$ has a $\psi$-approximating pair in $\cU$.
\end{lem}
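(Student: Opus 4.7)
The plan is simply to compose the two container lemmas already in place. Lemma \ref{lem:contain1} produces a small family $\mathcal{V}(g)$ of candidate mutual covers, and Lemma \ref{lem:contain2}, once a cover $Y$ is fixed, produces a family $\mathcal{W}(Y,g)$ of $\psi$-approximating pairs for every set $X$ that is mutually covered by $Y$ and has $|N(X)|=g$. The natural choice is therefore
\[
\cU \;:=\; \bigcup_{Y \in \mathcal{V}(g)} \mathcal{W}(Y,g).
\]

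The first step will be to verify coverage. Given $X \in \cH(g)$, Lemma \ref{lem:contain1} hands us some $Y \in \mathcal{V}(g)$ mutually covering $X$; applying Lemma \ref{lem:contain2} with that specific $Y$ then produces a $\psi$-approximating pair $(F,S) \in \mathcal{W}(Y,g) \subseteq \cU$ for $X$, as required.

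The second step is the size bound, which is just the union bound followed by substitution of the two given estimates:
\[
|\cU| \;\le\; |\mathcal{V}(g)| \cdot \max_{Y \in \mathcal{V}(g)} |\mathcal{W}(Y,g)| \;\le\; 2^{O(g\log^2 d/d + d)} \cdot 2^{O(g\log d/\psi)} \;=\; 2^{O(g\log^2 d/d + g\log d/\psi + d)}.
\]
I do not expect any genuine obstacle here: the lemma is essentially a tautological composition of Lemmas \ref{lem:contain1} and \ref{lem:contain2}. The only conceptual point is that the "for each $Y$" quantifier in Lemma \ref{lem:contain2} is uniform enough to let us range freely over the entire cover family from Lemma \ref{lem:contain1} without incurring overhead beyond multiplying the two family sizes, so the exponents simply add.
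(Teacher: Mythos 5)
Your proposal is correct and is exactly the paper's proof: the authors also define $\cU(g)=\bigcup_{Y\in\cV(g)}\cW(Y,g)$ and obtain the size bound by multiplying the two family sizes from Lemmas \ref{lem:contain1} and \ref{lem:contain2}. Nothing further is needed.
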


\begin{proof}
Take $\cU(g)=\bigcup_{Y \in \cV(g)}\cW(Y,g)$ where $\cV(g)$ and $\cW(Y,g)$ are the families in Lemmas~\ref{lem:contain1}- \ref{lem:contain2}.
\end{proof}

For a set $X\subseteq V$, define $X_1:=X \cap \cL_{d-1}$, and $X_2:=X \cap \cL_d$. The next observation says that a $\psi$-approximating pair for $X$ naturally induces $\psi$-approximating pairs for $X_1$ and $X_2$.

\begin{obs}\label{prop:SFappro1}
Let $(F,S)$ be a $\psi$-approximating pair for $X \sub V$. Let $F_1:=F \cap \cL_d$ and $S_1:=S \cap \cL_{d-1}$. 
Then $(F_1, S_1)$ is a $\psi$-approximating pair for $X_1$; i.e., $(F_1,S_1)$ satisfies \eqref{approx1}, \eqref{approx2}, and \eqref{approx3}. The same holds for $X_2$ with $F_2 :=F \cap \cL_{d-1}$ and $S_2:=S \cap \cL_d$.
\end{obs}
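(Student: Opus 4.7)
The plan is to verify the three defining conditions \eqref{approx1}, \eqref{approx2}, \eqref{approx3} for $(F_1,S_1)$ directly, and then invoke the obvious symmetry ($\cL_{d-1} \leftrightarrow \cL_d$) to handle $(F_2,S_2)$. The key observation throughout is that $\cB_d$ is bipartite with parts $\cL_{d-1}$ and $\cL_d$, so for any $v \in \cL_{d-1}$ we have $N(v) \subseteq \cL_d$, and vice versa. This layer-disjointness means that intersecting $F$ and $S$ with the appropriate layers does not lose any incidences relevant to $X_1$.

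For condition \eqref{approx1}: I would note that $N(X_1) \subseteq \cL_d$ while $N(X_2) \subseteq \cL_{d-1}$, so from $F \subseteq N(X) = N(X_1) \cup N(X_2)$ and $F_1 = F \cap \cL_d$ we get $F_1 \subseteq N(X_1)$. For $S_1 \supseteq X_1$, just intersect $S \supseteq X$ with $\cL_{d-1}$ and use $X_1 = X \cap \cL_{d-1}$.

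For condition \eqref{approx2}: take any $u \in S_1 \subseteq S \cap \cL_{d-1}$. Its neighbors lie entirely in $\cL_d$, so $N(u) \cap (V \setminus F_1) = N(u) \cap (\cL_d \setminus F_1) = N(u) \cap (\cL_d \setminus F) = N(u) \cap (V \setminus F)$, and therefore $d_{V \setminus F_1}(u) = d_{V \setminus F}(u) \le \psi$ by \eqref{approx2} applied to $(F,S)$. For condition \eqref{approx3}: given $v \in V \setminus F_1$, split on which layer contains $v$. If $v \in \cL_d$, then $v \notin F$ (since $F_1 = F \cap \cL_d$), so $d_S(v) \le \psi$, and the bipartite structure gives $N(v) \cap S_1 = N(v) \cap S$, yielding $d_{S_1}(v) \le \psi$. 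If $v \in \cL_{d-1}$, then $N(v) \subseteq \cL_d$, which is disjoint from $S_1 \subseteq \cL_{d-1}$, so $d_{S_1}(v) = 0$.

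The statement for $X_2$ with $(F_2, S_2) = (F \cap \cL_{d-1}, S \cap \cL_d)$ then follows by the identical argument with the roles of the two layers swapped. There is essentially no obstacle here: this is a bookkeeping verification that exploits the bipartiteness of $\cB_d$, and the whole point of the observation is to record this so that later proofs can reduce questions about pairs $(F,S)$ to questions about their single-layer pieces $(F_i, S_i)$.
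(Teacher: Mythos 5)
Your verification is correct: each of the three conditions for $(F_1,S_1)$ follows exactly as you describe from the bipartiteness of $\cB_d$ (neighborhoods of $\cL_{d-1}$-vertices lie in $\cL_d$ and vice versa), and the case split in \eqref{approx3} is handled properly. The paper states this as an Observation and omits the proof entirely; your argument is precisely the routine check the authors intended.
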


\begin{lem}
For $(F_i,S_i)$ ($i=1,2$) as above,
\beq{S.F.gap} |S_i|\le |F_i|+2|N(X_i)|\psi/(d-\psi).\enq
In particular, if $\psi \ll d$ then
\beq{sf.ub} |S_i| \le (1+o(1))|N(X_i)|.\enq
\end{lem}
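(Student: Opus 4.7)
The plan is a two-way count of edges between $S_i$ and $F_i$ inside the bipartite graph $\cB_d$, using the $\psi$-approximating pair data on the $S_i$-side and plain $d$-regularity on the $F_i$-side. First I would invoke Observation \ref{prop:SFappro1}: the restricted pair $(F_i, S_i)$ is itself a $\psi$-approximating pair for $X_i$, so \eqref{approx2} still holds and gives $d_{V \setminus F_i}(u) \le \psi$ for every $u \in S_i$. Since $\cB_d$ is $d$-regular and $S_i, F_i$ sit on opposite sides of the bipartition (so $N(u) \sub \cL_d$ for $u \in S_1 \sub \cL_{d-1}$, and symmetrically for $S_2$), this upgrades to $d_{F_i}(u) \ge d-\psi$. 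Summing over $u\in S_i$ then yields $(d-\psi)|S_i| \le |\nabla(S_i,F_i)|$.

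Second, I count the same edge set from the $F_i$-side using $d$-regularity: $|\nabla(S_i,F_i)| = \sum_{v\in F_i} d_{S_i}(v) \le d|F_i|$. Combining and rearranging,
\[
|S_i| \le \frac{d}{d-\psi}|F_i| = |F_i| + \frac{\psi\,|F_i|}{d-\psi},
\]
and since $F_i \sub N(X_i)$ by \eqref{approx1}, we conclude $|S_i| \le |F_i| + \psi|N(X_i)|/(d-\psi)$, which is in fact stronger than \eqref{S.F.gap} (the factor of $2$ in the stated bound is a conservative cushion and is not needed by this argument). For \eqref{sf.ub}, when $\psi \ll d$ we have $d/(d-\psi) = 1 + o(1)$, so $|S_i| \le (1+o(1))|F_i| \le (1+o(1))|N(X_i)|$.

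The argument is a routine double count, so I do not expect a real obstacle; the only things to verify are that condition \eqref{approx2} correctly restricts from $(F,S)$ to $(F_i,S_i)$ (which is exactly the content of Observation \ref{prop:SFappro1}) and that the bipartite incidence structure allows the degree budget $d$ to be spent entirely against the opposite side, so that the inequality $d_{F_i}(u) \ge d-\psi$ is sharp on the correct part.
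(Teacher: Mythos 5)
Your proof is correct and in fact gives a slightly stronger bound than the lemma states; it is a streamlined variant of the paper's double count rather than an identical one. The paper counts the larger edge set $\nabla(S_i, N(X_i))$: its lower bound uses \eqref{approx2} exactly as you do, but its upper bound $|\nabla(S_i,N(X_i))|\le d|F_i|+\psi|N(X_i)\setminus F_i|$ additionally invokes \eqref{approx3}, which leaves the error term $|(N(X_i)\setminus F_i)\cup(S_i\setminus X_i)|$ in \eqref{SFbd1}, and a second edge count out of $N(X_i)$ (namely \eqref{SFbd2}) is then needed to control it. By restricting the count to $\nabla(S_i,F_i)$ you need only $d_{F_i}(u)\ge d-\psi$ on the $S_i$-side and $d$-regularity on the $F_i$-side, so \eqref{approx3} and the auxiliary estimate disappear entirely, and you obtain $|S_i|\le \tfrac{d}{d-\psi}|F_i|$, improving the constant $2$ in \eqref{S.F.gap} to $1$. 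The one point that genuinely needs checking --- that \eqref{approx2} survives the restriction from $(F,S)$ to $(F_i,S_i)$ --- holds because all neighbors of $u\in S_1\subseteq\cL_{d-1}$ lie in $\cL_d$, so $N(u)\cap F=N(u)\cap F_1$; this is exactly Observation \ref{prop:SFappro1}, which you correctly cite. Both routes yield \eqref{sf.ub}, which is all that is used downstream.
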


\begin{proof} 
This proof of \eqref{S.F.gap} is almost identical to the proof of \cite[Lemma 5.3]{GS} (with slightly different parameters).
We use $X, F, S$ for $X_i, F_i, S_i$ for simplicity. First observe that
\[(d|S|-\psi|S \setminus X|=) \; d|X|+(d-\psi)|S \setminus X| \le |\nabla(S,N(X))|\le d|F|+\psi|N(X) \setminus F|.\]
By comparing the first and the last expressions, we have
\beq{SFbd1} |S| \le |F|+\psi|(N(X) \setminus F) \cup (S \setminus X)|/d.\enq
Also observe that
\[|\nabla(N(X))| \ge |\nabla(N(X) \setminus F)|+|\nabla(F, S \setminus X)| \ge (d-\psi)|(N(X) \setminus F) \cup (S \setminus X)|, \]
which gives
\beq{SFbd2} |(N(X) \setminus F) \cup (S \setminus X)| \le |N(X)|d/(d-\psi).\enq
Now \eqref{S.F.gap} follows from the combination of \eqref{SFbd1} and \eqref{SFbd2}, and \eqref{sf.ub} follows from \eqref{S.F.gap} by noticing $|F_i| \stackrel{\eqref{approx1}}{\le} |N(X_i)|$.
\end{proof}

\subsection{Entropy} \label{Entropy}
We briefly recall relevant entropy background; see e.g. \cite{McEliece} for more detailed introduction.

Let $X, Y$ be discrete random variables. The binary entropy function of $X$ is
\[ H(X)=\sum_x p(x) \log {\frac{1}{p(x)}},\]
where $p(x)=\pr(X=x)$ (and $p \log \frac{1}{p} :=0$ for $p=0$).
The \emph{conditional entropy of $X$ given $Y$} is
\beq{condent}
H(X|Y)=\sum_y p(y)\sum_x p(x|y)\log \frac{1}{p(x|y)}
\enq
(where $p(x|y)=\pr(X=x|Y=y)$).

\begin{lem}\label{entropyprop} For $H(\cdot)$ the binary entropy function, the following properties hold.
\beq{entropyprop1} \mbox{$H(X) \le \log |\mbox{Range}(X)|$, with equality iff $X$ is uniform from its range;}\enq
\beq{entropyprop1.5} H(X|Y) \le H(X) ;\enq
\beq{entropyprop2} \mbox{$H(X,Y)=H(X)+H(Y|X)$;}\enq
\beq{entropyprop3} \mbox{$H(X_1\dots X_n|Y) \le \sum H(X_i|Y)$ (note $(X_1\dots X_n)$ is a discrete r.v.);} \enq
\beq{entropyprop4} \mbox{if $Z$ is determined by $Y$, then
$H(X|Y) \le H(X|Z)$}; \enq
\beq{entropyprop5} \mbox{if $Z$ is determined by $X$, then
$H(X,Z|Y) = H(X|Y)$.} \enq
\end{lem}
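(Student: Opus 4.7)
The plan is to derive all six items directly from the definitions and the associated chain rule, using only Jensen's inequality (equivalently, the log-sum inequality / Gibbs' inequality $\sum p_i \log(p_i/q_i) \ge 0$) as the single analytic input. All items are classical, so I would structure the exposition so that later items reduce to earlier ones; I do not expect any real obstacle, only bookkeeping around the convention $0\log 0 := 0$ on null events.

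First I would establish \eqref{entropyprop1}: writing $H(X)=\sum_x p(x)\log(1/p(x))$ and applying Jensen to the concave function $\log$ with weights $p(x)$ yields $H(X)\le\log\sum_{x:\,p(x)>0} 1=\log|\mathrm{Range}(X)|$, with equality iff all positive $p(x)$ are equal, i.e.\ $X$ is uniform on its support. Next, \eqref{entropyprop2} is a one-line calculation: expand $H(X,Y)=\sum_{x,y}p(x,y)\log(1/p(x,y))$, split $p(x,y)=p(x)p(y|x)$, and separate the two logs to recover $H(X)+H(Y|X)$, matching the definition \eqref{condent}.

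For \eqref{entropyprop1.5} and \eqref{entropyprop3}, the cleanest route is via the nonnegativity of mutual information. Specifically,
\[
I(X;Y):=H(X)-H(X|Y)=\sum_{x,y}p(x,y)\log\frac{p(x,y)}{p(x)p(y)}\ge 0
\]
follows from Gibbs' inequality, giving \eqref{entropyprop1.5}. For \eqref{entropyprop3} I would iterate the conditional chain rule
\[
H(X_1\ldots X_n\mid Y)=\sum_{i=1}^{n} H(X_i\mid X_1\ldots X_{i-1},Y),
\]
obtained from \eqref{entropyprop2} applied to conditional distributions, and then apply the conditional version of \eqref{entropyprop1.5} to drop the earlier $X_j$'s, yielding $\sum_i H(X_i\mid Y)$.

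Finally, the two ``determinism'' statements \eqref{entropyprop4} and \eqref{entropyprop5} follow because a function of a random variable contributes no extra entropy. For \eqref{entropyprop4}, if $Z$ is determined by $Y$ then $(X,Y)$ and $(X,Y,Z)$ have the same distribution, so
\[
H(X|Z)-H(X|Y)=H(X|Z)-H(X|Y,Z)=I(X;Y\mid Z)\ge 0
\]
by the conditional Gibbs inequality; hence conditioning on the finer information $Y$ cannot increase entropy above the value obtained from the coarser $Z$. For \eqref{entropyprop5}, if $Z=\psi(X)$ then $p(x,z\mid y)=p(x\mid y)\mathbf 1[z=\psi(x)]$, so the joint distribution of $(X,Z)$ given $Y$ is supported on the graph of $\psi$ and the double sum in \eqref{condent} applied to $(X,Z)$ collapses to $H(X\mid Y)$.
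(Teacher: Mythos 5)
Your proposal is correct. The paper does not actually prove \Cref{entropyprop} --- it lists these facts as standard background and refers the reader to a textbook --- and your derivations (Jensen/Gibbs for \eqref{entropyprop1} and \eqref{entropyprop1.5}, the chain rule for \eqref{entropyprop2} and \eqref{entropyprop3}, and the collapse of the conditional distribution onto the graph of the determining function for \eqref{entropyprop4} and \eqref{entropyprop5}) are exactly the standard arguments one would find there.
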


We also need the following version of \emph{Shearer's Lemma} \cite{Sh}.

\begin{lem}\label{lem:Sh}
Let $X=(X_1, \dots, X_N)$ be a random vector, and $\alpha:2^{[N]}\rightarrow \mathbb R^+$
satisfies
\beq{fractiling}
\sum_{A\ni i}\alpha_A =1 \quad \forall i\in [N].
\enq
Then for any partial order $\prec$ on $[N]$,
\beq{Shearer}
H(X)\leq \sum_{A\subseteq [N]}\alpha_AH\big(X_A|(X_i:i \prec A)\big),
\enq
where $X_A=(X_i:i\in A)$ and $i\prec A$ means $i\prec a\  \forall a\in A$.
\end{lem}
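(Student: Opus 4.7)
The plan is to prove Shearer's Lemma by reducing both sides of \eqref{Shearer} to a common expression indexed by a total order extending $\prec$. First, I would fix an arbitrary linear extension $<$ of $\prec$ on $[N]$ and apply the chain rule \eqref{entropyprop2} iteratively to obtain the baseline identity
\[
H(X) \;=\; \sum_{i=1}^N H\bigl(X_i \mid (X_j : j<i)\bigr).
\]

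Next, for each $A \subseteq [N]$ I would decompose $H\bigl(X_A \mid (X_j : j \prec A)\bigr)$ using the same chain rule with respect to the restriction of $<$ to $A$:
\[
H\bigl(X_A \mid (X_j : j\prec A)\bigr) \;=\; \sum_{i \in A} H\bigl(X_i \mid (X_j : j\prec A),\; (X_j : j\in A,\ j<i)\bigr).
\]
The crucial observation is that the conditioning set on the right-hand side is contained in $\{j : j < i\}$: if $j \prec A$ then in particular $j \prec i$ (since $i \in A$), and $\{j \in A : j<i\} \subseteq \{j : j<i\}$ trivially. Since conditioning on additional variables can only decrease entropy---the conditional version of \eqref{entropyprop1.5}, which follows by applying \eqref{entropyprop1.5} fiberwise over the existing conditioning variables (equivalently, by combining \eqref{entropyprop2} with nonnegativity of mutual information)---each summand is bounded below by $H\bigl(X_i \mid (X_j : j<i)\bigr)$.

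Combining these pieces and swapping the order of summation yields
\[
\sum_{A\subseteq[N]} \alpha_A\, H\bigl(X_A \mid (X_j : j\prec A)\bigr) \;\ge\; \sum_{i=1}^N H\bigl(X_i \mid (X_j : j<i)\bigr) \sum_{A \ni i} \alpha_A \;=\; \sum_{i=1}^N H\bigl(X_i \mid (X_j : j<i)\bigr) \;=\; H(X),
\]
where the penultimate equality uses the fractional tiling condition \eqref{fractiling} and the last equality is the identity from the first step.

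There is no genuine obstacle in this argument; it is purely formal once the two chain-rule expansions are lined up against each other. The only point worth emphasizing is why one must pass from the partial order $\prec$ to a linear extension $<$: the partial order does not by itself support a chain-rule decomposition of $H(X)$, so introducing a total order is exactly what enables the term-by-term comparison of each $H\bigl(X_A\mid(X_j:j\prec A)\bigr)$ with the contributions $H\bigl(X_i\mid(X_j:j<i)\bigr)$ that sum to $H(X)$.
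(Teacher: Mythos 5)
Your proof is correct, and it is the standard argument for this form of Shearer's lemma: expand $H(X)$ by the chain rule along a linear extension of $\prec$, expand each $H\bigl(X_A\mid(X_i:i\prec A)\bigr)$ the same way, observe that every conditioning set appearing in the latter is contained in the corresponding down-set $\{j: j<i\}$ so that dropping to it can only increase entropy in the right direction, and then sum using the fractional tiling condition \eqref{fractiling}. The paper does not prove this lemma at all --- it is quoted from the reference [Sh] --- so there is nothing to compare against; your write-up would serve as a complete proof, and all the steps (the conditional chain rule and the conditional form of \eqref{entropyprop1.5}) are correctly identified and correctly oriented.
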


\subsection{Polymer models and the cluster expansion}\label{subsec.polymer intro} 
This section gives a brief introduction to two tools from statistical physics, \textit{polymer models} and \textit{the cluster expansion}, in the language of graph theory. For more general exposition and applications of polymer models, we refer interested readers to~\cite{fernandez2007cluster, JP, kotecky1986cluster}.
\vspace{5pt}

\nin \textit{Abstract polymer model}. 
Let $H_{\cP}$ be a graph  defined on a finite set $\cP$, such that every vertex has a loop edge and there is no multiple edge.
For historical reasons in physics, the vertices $\gamma \in \cP$ are called \textit{polymers}.
Two polymers $\gamma, \gamma'$ are called \textit{adjacent}, denoted by $\gamma\sim \gamma'$, if there is an edge $\gamma\gamma'$ in $H_{\cP}$.
In particular, every polymer is adjacent to itself.
We equip each polymer $\gamma$ with a complex-valued weight $\we(\gamma)$, and refer the weighted graph $(H_{\cP}, \we)$ as \textit{the polymer model}. 
Denote by $\Omega_{\cP}$ the collection of independent sets of $H_{\cP}$ (including the empty set), where loops are allowed.
The \textit{polymer model partition function} 
\begin{equation}\label{def:partitionfunction}
  \Xi(\cP, \we)=\sum_{\Lambda\in \Omega_{\cP}}\prod_{\gamma\in \Lambda}\we(\gamma)
\end{equation}
is essentially the weighted independence polynomial of $(H_{\cP}, \we)$.
Sometimes, by abuse of notation, we also refer to $(\cP,\we)$ or $\Xi(\cP, \we)$ as the polymer model.
\vspace{5pt}

\nin \textit{Cluster expansion}. 
For an ordered multiset $\Gamma=(\gamma_1, \gamma_2, \ldots, \gamma_k)$ of polymers,
we define the \textit{incompatibility graph} $H_{\cP}[\Gamma]$ to be the simple graph defined on $\Gamma$ with the edge set $E=\{\gamma_i\gamma_j: \gamma_i\sim \gamma_j \text{ in } H_{\cP}\}$.
We say an ordered multiset $\Gamma$ of polymers is a \textit{cluster} if it is non-empty, and the incompatibility graph $H_{\cP}[\Gamma]$ is connected.
For example, for two adjacent polymers $\gamma, \gamma'$, the ordered multiset $\Gamma=(\gamma, \gamma', \gamma)$ is a cluster with $H_{\cP}[\Gamma]=K_3$, where $K_3$ is the complete graph on three vertices.

For a simple graph $H$, let $\phi(H)$ be the \textit{Ursell function} of $H$, defined by
\[
\phi(H)=\frac{1}{|V(H)|!}\sum \{(-1)^{e(F)}: \mbox{$F$ is a connected spanning subgraph of $H$}\}.\]

We remark that, from the above definition,
\beq{ursell} \mbox{for any fixed $k \in \NN$, if $|V(H)|\le k$ then $\phi(H)=O_k(1)$.}\enq
The \textit{weight function of a cluster} $\Gamma$ is defined using the Ursell function as
\begin{equation}\label{def:cluweight}
   \we(\Gamma)=\phi(H_{\cP}[\Gamma])\prod_{\gamma\in \Gamma}\we(\gamma).
\end{equation}

Let $\mathcal{C}$ be the set of all clusters on $\cP$. The \textit{cluster expansion} is the formal power series of the logarithm
of the partition function $\Xi(\cP, w)$, which takes the form\footnote{For details of the cluster expansion, we refer  readers to Chapter 5 of \cite{friedli2017statistical}.}
\begin{equation}\label{eq:cluexp}
 \ln\Xi(\cP, \we) = \sum_{\Gamma\in\mathcal{C}}\we(\Gamma). 
\end{equation}

Note that the cluster expansion is an infinite series, despite the finiteness of the polymer model.
A sufficient condition for the convergence of the cluster expansion is given by Koteck\'{y} and Preiss~\cite{kotecky1986cluster} in 1986.

\begin{thm}[Convergence of the cluster expansion]\label{KPconv}
Let $f: \cP \rightarrow [0, \infty)$ and $g: \cP \rightarrow [0, \infty)$ be two functions. Suppose that for all polymers $\gamma\in\cP$,
\begin{equation}\label{eq:kot-thm1}
\sum_{\gamma'\sim \gamma}|\we(\gamma')|\exp\left(f(\gamma')+g(\gamma')\right)\leq f(\gamma),
\end{equation}
then the cluster expansion~(\ref{eq:cluexp}) converges absolutely. 
Moreover, for a cluster $\Gamma\in \cC$, let $g(\Gamma)=\sum_{\gamma\in\Gamma}g(\gamma)$ and write $\Gamma\sim \gamma$ if there exists $\gamma'\in \Gamma$ so that $\gamma\sim \gamma'$. Then for all polymers $\gamma$,
\begin{equation}\label{eq:kot-thm2}
\sum_{\Gamma\in \mathcal{C}, \Gamma\sim \gamma}|\we(\Gamma)|\exp\left(g(\Gamma)\right)\leq f(\gamma).
\end{equation}
\end{thm}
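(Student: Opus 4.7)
The plan is to prove~\eqref{eq:kot-thm2} directly by induction on cluster size; absolute convergence of the cluster expansion~\eqref{eq:cluexp} then follows by summing $|w(\Gamma)|$ over clusters $\Gamma$ containing a fixed polymer $\gamma$ and noting that $\Gamma \ni \gamma$ implies $\Gamma \sim \gamma$, combined with finiteness of $\cP$. The main tool is the tree-graph inequality (Penrose / Brydges--Federbush): for any connected simple graph $H$ on a finite vertex set $V$,
\[
|V|!\,|\phi(H)| \;\le\; \#\bigl\{\tau : \tau \text{ is a spanning tree of } K_V \text{ with } E(\tau)\subseteq E(H)\bigr\}.
\]
Applied to $H=H_{\cP}[\Gamma]$, this rewrites the Ursell weight $\phi(H_{\cP}[\Gamma])$ as (the sign-symmetrized version of) a sum over labelled trees on $\Gamma$ whose edges record incompatibilities, which is the shape best suited for an inductive estimate.

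For the inductive step, define, for each polymer $\gamma$ and each $n\ge 1$, the truncated sum
\[
A_n(\gamma) \;:=\; \sum_{\substack{\Gamma \in \cC,\; \Gamma \sim \gamma \\ |\Gamma| \le n}} |\we(\Gamma)|\,\exp\bigl(g(\Gamma)\bigr),
\]
and prove by induction on $n$ that $A_n(\gamma) \le f(\gamma)$. The base case $n=1$ is exactly hypothesis~\eqref{eq:kot-thm1} restricted to single-polymer clusters (for which $\phi=1$), together with the fact that $e^{f(\gamma')}\ge 1$. For the inductive step, use the tree bound to root each cluster $\Gamma\sim\gamma$ at some polymer $\gamma_0\in\Gamma$ adjacent to $\gamma$, then express the contribution of $\Gamma$ as a product over the rooted tree branches emanating from $\gamma_0$. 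Along each branch hanging off a vertex $\gamma'\in\Gamma$, the inductive hypothesis $A_{n-1}(\gamma')\le f(\gamma')$ bounds the total contribution of the subcluster supported on that branch. This is precisely the factor that the KP hypothesis~\eqref{eq:kot-thm1} allows to be absorbed via
\[
\sum_{\gamma''\sim \gamma'} |\we(\gamma'')|\exp\bigl(f(\gamma'')+g(\gamma'')\bigr) \;\le\; f(\gamma').
\]
After summing over all branch structures at every internal node, everything collapses to a single application of~\eqref{eq:kot-thm1} at the root $\gamma$, giving $A_n(\gamma)\le f(\gamma)$. Sending $n\to\infty$ yields~\eqref{eq:kot-thm2}.

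The main obstacle is the bookkeeping in the inductive step: one must handle the combinatorial factor $1/|V(H_{\cP}[\Gamma])|!$ hidden in the Ursell function against the overcounting of rooted labelled trees, and arrange that exactly one factor of $e^{f(\gamma')}$ is freed at each branching polymer $\gamma'$ so that the inductive bound plugs directly into~\eqref{eq:kot-thm1}. The $e^{g(\Gamma)}$ factors split multiplicatively along the tree, which causes no trouble, but the $e^{f(\gamma')}$ factors must be introduced at each branch vertex and then consumed when one applies~\eqref{eq:kot-thm1}---this is where keeping precise track of which polymers are roots versus leaves of subtrees is essential. Once these exponential weights are balanced correctly along the tree, the induction closes cleanly, \eqref{eq:kot-thm2} follows, and absolute convergence of~\eqref{eq:cluexp} is immediate.
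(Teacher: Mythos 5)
First, a point of reference: the paper does not prove this statement at all --- Theorem~\ref{KPconv} is the Koteck\'y--Preiss convergence criterion, quoted verbatim from \cite{kotecky1986cluster} and used as a black box. So there is no ``paper's proof'' to compare against; I can only assess your sketch on its own terms.

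Your outline follows the standard modern route (essentially Ueltschi's simplification of Koteck\'y--Preiss, as presented e.g.\ in Chapter 5 of \cite{friedli2017statistical}): bound the Ursell function via the tree-graph (Penrose) inequality, prove \eqref{eq:kot-thm2} by induction on the number of polymers in the cluster using truncated sums $A_n(\gamma)$, and deduce absolute convergence of \eqref{eq:cluexp} from \eqref{eq:kot-thm2}, finiteness of $\cP$, and $e^{g(\Gamma)}\ge 1$. All of these structural choices are correct: the base case is indeed \eqref{eq:kot-thm1} restricted to singleton clusters, the factor $e^{f(\gamma')}$ is indeed the quantity that must be generated at each branch vertex by summing the inductive bound over the (arbitrarily many) subtrees hanging there --- via $\sum_{m\ge 0} f(\gamma')^m/m! = e^{f(\gamma')}$ --- and consumed by one application of \eqref{eq:kot-thm1}, and the final deduction of convergence is sound.

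That said, as written this is a plan rather than a proof. The entire mathematical content of the theorem lives in the inductive step you defer: one must (a) pass from ``$\Gamma\sim\gamma$'' to a sum over pinned clusters $\Gamma\ni\gamma_0$ with $\gamma_0\sim\gamma$ and control the resulting overcounting when $\Gamma$ contains several polymers adjacent to $\gamma$; (b) match the $1/k!$ inside $\phi$ against the count of rooted labelled spanning trees and the multinomial factors arising when the remaining $k-1$ polymers are distributed among the subtrees of the root; and (c) verify that the inductive hypothesis applies to each subtree as a cluster in its own right (its incompatibility graph is connected because a tree branch is connected). You explicitly acknowledge these as ``the main obstacle'' and then assert that ``the induction closes cleanly.'' It does --- this is a correct and well-trodden argument --- but asserting it is not proving it. To turn this into an acceptable proof you would need to write out the pinned-sum estimate
\[
\sum_{\Gamma\ni\gamma_0}|\we(\Gamma)|\exp(g(\Gamma))\;\le\;|\we(\gamma_0)|\exp\bigl(f(\gamma_0)+g(\gamma_0)\bigr)
\]
with the tree decomposition and the factorial bookkeeping made explicit, and then sum over $\gamma_0\sim\gamma$ using \eqref{eq:kot-thm1}.
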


\section{Proof of Lemma \ref{lem.step1}}\label{sec.step1}

This section closely follows the arguments originated in \cite{Kahn} and developed in \cite{EG} (and also adapted in \cite{JK}) with the additional idea of "rotating" $\cB_d$. The arguments in \cite{Kahn,EG} heavily rely on the nice property of the Hamming cube that it can be decomposed into two half cubes with a perfect matching between them (i.e. $Q_n=Q_{n-1} \square K_2$). In their argument, both the ranked structure of the $Q_{n-1}$ and the existence of the perfect matching between the two copies of $Q_{n-1}$ are crucial. The less nice structure of $\cB_d$ blocks one from a straightforward extension of this argument, but in Section \ref{subsec.rotation} we show that $\cB_d$ still possesses a portion of the structure we desire.
	
	Lemma \ref{lem.step1} follows from Lemma \ref{lem.step1'} below. Given $f \in C_q(\cB_d)$, say $uv \in E$ is \textit{ideal} if $(f(N_u), f(N_v))$ is a principal partition. 
	
	\begin{lem}\label{lem.step1'} 
	Let $\bbf$ be a uniformly random coloring chosen from $C_q(\cB_d)$.
	For any $e \in E$,
		\beq{eq.lem3.1}\pr_\bbf(\mbox{$e$ is not ideal})=2^{-\gO(d)}.\enq
	\end{lem}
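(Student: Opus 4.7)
The plan is to adapt the entropy method of Kahn~\cite{Kahn} and Engbers--Galvin~\cite{EG} to $\cB_d$, using the ``rotation'' structure of Section~\ref{subsec.rotation} as a substitute for the product decomposition $Q_n = Q_{n-1}\square K_2$ on which those works rely. By edge-transitivity of $\cB_d$ under coordinate permutations, it suffices to fix an arbitrary edge $e = uv$ with $u \in \cL_d$, $v \in \cL_{d-1}$.

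Let $\bbf$ be uniformly random in $C_q(\cB_d)$, so $H(\bbf) = \log c_q(\cB_d)$, and write $p = \pr_\bbf(e \text{ not ideal})$. The identity $H(\bbf \mid e \text{ not ideal}) = \log p + \log c_q(\cB_d)$ reduces the claim $p \leq 2^{-\Omega(d)}$ to
\[
H(\bbf \mid e \text{ not ideal}) \leq \log c_q(\cB_d) - \Omega(d).
\]
The trivial lower bound $c_q(\cB_d) \geq (q/2)^N$ from ground-state colorings (for any principal $(A,B)$ one may independently assign each $\cL_d$-vertex a color from $A$ and each $\cL_{d-1}$-vertex a color from $B$, giving a proper coloring) further reduces the task to $H(\bbf \mid e \text{ not ideal}) \leq N\log(q/2) - \Omega(d)$.

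For the upper bound I follow the template of \cite{EG}: properness gives $\bbf_x \in \cQ \setminus \bbf(N_x)$ for each $x \in V$, hence $H(\bbf_x \mid \bbf_{N_x}) \leq \log(q - |\bbf(N_x)|)$, and whenever $|\bbf(N_x)| \geq q/2$ the per-vertex contribution is at most $\log(q/2)$. The event $\{e \text{ not ideal}\}$ asserts that $(\bbf(N_u), \bbf(N_v))$ is not principal, so at least one of the following holds: (i) $|\bbf(N_u)| > q/2$ or $|\bbf(N_v)| > q/2$; (ii) some color of $\cQ$ appears in neither $\bbf(N_u)$ nor $\bbf(N_v)$; (iii) some color appears in both. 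In each case there is a local $\Omega(1)$ entropy deficit at some vertex in $\{u, v\} \cup N_u \cup N_v$ relative to the ground-state bound. The role of the rotation structure of Section~\ref{subsec.rotation} is to \emph{amplify} this to $\Omega(d)$: by averaging Shearer-type inequalities (\Cref{lem:Sh}) over the family of coordinate-permutation rotations of $\cB_d$, each of the $\Omega(d)$ vertices in $N_u \cup N_v$ contributes an independent $\Omega(1)$ deficit in some rotated entropy decomposition, and these sum to the required $\Omega(d)$ total gap.

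The main obstacle is this amplification: a naive single-vertex bound at $e$ yields only $O(1)$ deficit, so the rotation apparatus of Section~\ref{subsec.rotation} is essential for spreading the non-principality constraint across $\Omega(d)$ distinct vertices. Once it is in hand, the remainder of the argument is a careful entropy bookkeeping in the spirit of \cite{EG, JK}.
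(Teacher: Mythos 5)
Your reduction fails at the second step, and the failure is quantitative, not cosmetic. Writing $p=\pr_\bbf(e\text{ not ideal})$, the identity $H(\bbf\mid e\text{ not ideal})=\log p+\log c_q(\cB_d)$ is correct, but replacing $\log c_q(\cB_d)$ by the crude lower bound $N\log(q/2)$ throws away a term that dwarfs the savings you are after: by \Cref{realMT} (see also the computation \eqref{L1.compute}, or just count colorings whose flaw set is an arbitrary sparse subset of one layer), $\log c_q(\cB_d)-N\log(q/2)=\Theta\bigl(N(1-2/q)^d\bigr)=2^{\Omega(d)}$, since $N=2^{2d(1-o(1))}$ and $(1-2/q)^d\ge 2^{-d}$. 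Consequently, even when $p=2^{-\Theta(d)}$ (which is the truth), $H(\bbf\mid e\text{ not ideal})=N\log(q/2)+2^{\Omega(d)}-\Theta(d)>N\log(q/2)$, so the inequality $H(\bbf\mid e\text{ not ideal})\le N\log(q/2)-\Omega(d)$ that you set out to prove is simply false. To run a conditioning argument of this shape you would need an upper bound on the conditional entropy accurate to within additive $O(d)$ of $\log c_q(\cB_d)$ itself, i.e.\ you would already need the asymptotics of $c_q(\cB_d)$ to essentially the precision the whole paper is designed to obtain.

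The paper's proof never conditions on the bad event and does not attempt to extract $\Omega(d)$ of savings from the $O(d)$ vertices near $e$. It defines $\eps\ge\pr(e\text{ not ideal})$ via the sets $M_v,M_{v'}$, and exploits symmetry so that \emph{every} one of the $|V^*|=\frac{d}{2n}N=\Theta(N)$ matched pairs $vv'$ contributes the same $\eps\cdot\Theta(1)$ deficit inside a single Shearer decomposition (\Cref{lem:Sh}) built from the rotation of \Cref{subsec.rotation} --- one fixed affine shift producing the matching $v\mapsto v'$ and the partial order $N_w\prec M_v\cup M_{v'}\prec v,v'$, not an average over many coordinate permutations as you suggest. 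Comparing the resulting upper bound with $H(\bbf)\ge N\log(q/2)$ gives $\eps\le O(d^3/N)+O(1/d)\cdot H(\bar\bbf_{M_v},\bar\bbf_{M_{v'}}\mid\bar\bbf_{N_w})$; the trivial bound on that conditional entropy yields only $\eps=O(1/d)$, and reaching $2^{-\Omega(d)}$ requires the bootstrap of \Cref{Lem4.2}, which shows the conditional entropy is $O(\eps\log(1/\eps))$. The exponential decay ultimately comes from the term $d^3/N$ with $N=2^{\Theta(d)}$, not from summing $\Omega(1)$ deficits over $\Omega(d)$ vertices of $N_u\cup N_v$. Your ``amplification'' step supplies no mechanism by which deficits at distinct neighbors of $e$ add within one entropy decomposition, and even granting it, an $\Omega(d)$ total deficit relative to $N\log(q/2)$ would be insufficient for the reason in the previous paragraph. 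The bootstrapping lemma, which is where most of the work in the actual proof lies, is entirely absent from your outline.
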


	\begin{proof}[Derivation of Lemma \ref{lem.step1} from Lemma \ref{lem.step1'}]

		Given $f$, say a path is \textit{ideal} if all the edges in the path are ideal. Fix any $u \in V$, and write $K_u=K_u(\bbf)$ for the set of vertices that can be reached from $u$ via an ideal path. We may assume $u \in \cL_{d-1}$ by symmetry. 
For $v \ne u$, let $Q^*_{uv}$ be the event $\{v \in K_u\}$. Observe that
			\[\pr(Q^*_{uv}) \ge 1-2^{-\gO(d)} \mbox{ for any $v \ne u$;}\]
\nin indeed, with $P$ a shortest path in $\cB_d$ from $u$ to $v$ (so has length $\le n$), by \Cref{lem.step1'} we have
			\[\pr(Q^*_{uv}) \ge \pr(\mbox{$P$ ideal})\ge 1-n2^{-\gO(d)}=1-2^{-\gO(d)}.\]

		By the above observation, we have $\mathbb E| u \cup K_u|\ge (1-2^{-\gO(d)})|V|$, so by Markov's Inequality, there is a constant $c>0$ such that
		\[\pr(|u \cup K_u|<|V|(1-2^{-cd}))=2^{-\gO(d)}.\]
Note that, by the definition of $K_u$, $f$ agrees with $(f(N_u), \cC \setminus f(N_u))$ at all the vertices in $u \cup K_u$, and $(f(N_u), \cC \setminus f(N_u))$ is principal unless $K_u =\emptyset$. Now Lemma \ref{lem.step1} follows.\end{proof}

\subsection{Structure of $\cB_d$}\label{subsec.rotation}
	
	We first define "levels" of vertices in $\cB_d$. 
		To make the notion more intuitive, we begin by ``rotating" $\cB_d$: for each vertex $x\in V$, let $$x^\rot := x + (\underbrace{1,\ \dots,\  1}_{\text{$d-1$ entries}}, \underbrace{0,\ 0,\ \dots,\ 0}_{\text{ $d$ entries}})$$ 
	(where addition is taken modulo 2). Let $V^\rot=\{x^\rot:x \in V\}$. Notice that this rotation preserves Hamming distance between pairs of vertices; in particular, vertices $x$ and $y$ are adjacent if and only if $x^{\rot}$ and $y^{\rot}$ differ in exactly one entry. Therefore, $\cB_d$ and $\cB_d^\rot:=Q_n[V^\rot]$ are isomorphic, so it suffices to prove \Cref{lem.step1'} for $\cB_d^\rot$ (and we will do so).

The $k^\text{th}$ level of $\cB_d^\rot$ is now defined as in \cite{EG}:	
	\[
		L_k := \left\{x\in V^\rot\ :\   \sum_{i=0}^{n-1}x_i = k\right\}.
	\]
Observe that the levels of $\cB_d^\rot$ ranges from 0 to $n-1$.
To obtain a decomposition of $\cB_d^\rot$ similar to the decomposition of $Q_n=Q_{n-1} \square K_2$, consider the partition $(V_0,V_1)$ of $V^\rot$ where $V_0=\{x \in V^\rot:x_n=0\}$ and $V_1=\{x \in V^\rot:x_n=1\}$. Observe that $|V_0|=|V_1|=N/2$, and $\nabla(V_1,V_2)$ is a (not necessarily perfect) matching (since $\cB_d^\rot$ is a subgraph of $Q_n=Q_{n-1} \square K_2$). Moreover,
\[\mbox{there is a perfect matching between $L_k \cap V_0$ and $L_k \cap V_1$ if $k$ is even,}\]
and
\[\mbox{$\nabla(L_k \cap V_0,L_k \cap V_1)=\emptyset$ if $k$ is odd}.\] 
Indeed, for any $v \in L_k \cap V_0$, 
\beq{def.v prime} v'=v + (0,0,\dots, 0,1)\enq
 is in $L_k \cap V_1$ (we call $v'$ \textit{the mate of $v$} in this case) iff $k$ is even. We denote $V^*=\{x \in L_k \cap V_0:k \text{ even}\}$, and note for future reference that 
		\begin{equation}\label{eq:VStarSize}
			|V^*| = \tbinom{n-1}{d-1} = \tfrac{d}{n}\tbinom{n}{d} = \tfrac{d}{2n}N.
		\end{equation}

\nin For $v \in V^*$, let \[M_v:=N_v \setminus \{v'\} (\sub V_0),\] and define $M_{v'}$ similarly. Finally, for each $v \in V^*$ with the Hamming weight $|v|\ge 4$, we associate 
\beq{def.w} \mbox{a $w = w(v) \in V^*$ with $|w|=|v|-4$ and $w$ being connected to $v$ by a path of length 4 in $\cB_d^\rot[V_0]$.}\enq
This is possible because, in fact, for any $v \in L_k \cap V_0$ and $\underline 0 \in V_0$, their preimages in $\cB_d$, i.e., $v^\rot$ and $\underline 0^\rot$, are connected by a path of length $|v^\rot +\underline 0^\rot|=|v|=k$ in $\cB_d$.

\subsection{Proof of \Cref{lem.step1'}}\label{subsec.pf of step1} In this section, we use $\bar f$ for $f(\cdot)$ to shorten the notation. Again, the argument in this section, which builds upon the decomposition of $\cB_d^\rot$ into $V_0 \cup V_1$ discussed in \Cref{subsec.rotation}, closely follows \cite{Kahn,EG}, and we try to be brief while pointing out differences that is worth noting.

Let $v\in V^*$ (so $v\in L_i$ for some even $i$). For $v'$ and $w=w(v)$ defined in \eqref{def.v prime} and \eqref{def.w}, observe that
	\begin{equation}\label{eq:sub1}
	M_v\cup M_{v'} \subseteq L_{i-1}\cup L_{i+1},
	\end{equation}
and	
	\begin{equation}\label{eq:sub2}
	N_w \subseteq L_{i-5}\cup L_{i-4}\cup L_{i-3}.
	\end{equation}

	\nin Following \cite{Kahn}, we define the following order on the indices of the levels: $1\prec 0\prec 3\prec 2\prec 5\prec 4\cdots.$ To turn this into a partial order on $V$, we say that $x\prec y$ if $i\prec j$, where $x\in L_i$ and $y\in L_j$.
	
	With this partial order, we have $\{i-1,i+1\}\prec i$ and $\{i-5, i-4, i-3\} \prec \{i-1 ,i+1\}$ for even indices $i$. Thus \eqref{eq:sub1} and \eqref{eq:sub2} translate into the following:	
	\begin{equation}\label{eq:order1}
		N_w\prec M_v\cup M_{v'} \prec v,v' \quad \text{for each $v \in V^*$}
	\end{equation}
	(where we write $X\prec Y$ if $x\prec y$ for \emph{all} $x\in X$ and $y\in Y$).

We first point out that for a uniformly random $q$-coloring $\bbf$, by \eqref{entropyprop1} we have the trivial bound:	
	\beq{eq:trivH} H(\bbf)\ge N\log	\left(\hq\right).\enq 		
\nin	To complete the proof of \Cref{lem.step1'}, we wish to give an upper bound on $H(\bbf)$ that would be smaller than the lower bound \eqref{eq:trivH} if $\pr(\mbox{$e$ is not ideal})$ is large, allowing us to conclude that $\pr(\mbox{$e$ is not ideal})=2^{-\gO(d)}$.

We may assume that $e=vv'$ for some $v \in V^*$ by symmetry. 
Observe that
		\beq{lem:epsBreakdown} 
		\eps:= 1-\hspace{-.3ex}\sum_{\text{$(A,B)$ principal}} \pr(\bar\bbf_{M_v}=A,\   \bar\bbf_{M_{v'}}=B)\ge 1-\hspace{-.3ex}\sum_{\text{$(A,B)$ principal}} \pr(\bar\bbf_{N_v}=A,\   \bar\bbf_{N_{v'}}=B)=\pr(\mbox{$e$ is not ideal});\enq
therefore to prove \Cref{lem.step1'}, it will suffice to show that $\eps = 2^{-\Omega(d)}$.

	We are now ready to compute an upper bound for $H(\bbf)$. 
Note that the family $\{M_v\cup M_{v'}: v\in V^*\}$ together with $d$ copies of each edge $vv'$ with $v\in V^*$ covers each vertex of $V$ exactly $d$ times, so by \Cref{lem:Sh}, we have 
	\[H(\bbf)\le \frac{1}{d} \left(d \sum_{v \in V^*} H\big(\bbf_{v,v'}\big|(\bbf_i:i\prec v,v')\big)+\sum_{v \in V^*}H\big(\bbf_{M_v,M_{v'}}\big|(\bbf_i:i\prec M_v,M_{v'})\big)\right),\]
which can be relaxed to
	\begin{equation}\label{18} H(\bbf)\le  \sum_{v \in V^*} H(\bbf_{v,v'}|\bbf_{M_v,M_{v'}})+\frac{1}{d}\sum_{v \in V^*}H(\bbf_{M_v,M_{v'}}|\bbf_{N_w})
	\end{equation}
using \eqref{eq:order1}.

		For the first term on the rhs of \eqref{18}, 
	\beq{eq:term1} \begin{split}
		H(\bbf_{v,v'}|\bbf_{M_v,M_{v'}})
		&\stackrel{\eqref{entropyprop4}}{\le} H(\bbf_{v,v'}|\bar\bbf_{M_v},\bar\bbf_{M_{v'}})		\\
		&\stackrel{\eqref{condent}}{=} \sum_{A_0,A_1}  H\big(\bbf_{v,v'}\,\big|\ \bar\bbf _{M_v}=A_0,\, \bar\bbf_{M_v'}=A_1\big)\cdot \pr(\bar\bbf_{M_v}=A_0,\, \bar\bbf_{M_v'}=A_1),\\
	\end{split}\enq
where the sum is taken over \emph{all} possible $A_0,A_1\subseteq \cQ$. Note that
\beq{eq:term1'} H\big(\bbf_{v,v'}\,\big|\ \bar\bbf _{M_v}=A_0,\, \bar\bbf_{M_v'}=A_1\big) \stackrel{\eqref{entropyprop1}}{\le} \log\big[(q-|A_0|)(q-|A_1|)-|\cQ \setminus(A_0\cup A_1)|\big]\enq
since $v \sim v'$ so they cannot take the same color.
	
	Next we treat the second term on the rhs of \eqref{18}. For $v\in V^*$ with $|v|\le 3$ (the number of such vertices is at most ${n \choose \le 3}=O(d^3)$), the vertex $w=w(v)$ is not defined, and we simply apply the naive bound
	\[H(\bbf_{M_v,M_{v'}}|\bbf_{N_w})\ =\ H(\bbf_{M_v,M_{v'}})\ \le\ 2(d-1)\log q.\]
	
	For $v\in V^*$ with $|v|\ge 4$, we bound the conditional entropy in two pieces, corresponding to (i) the choice of color sets for $M_v$ and $M_{v'}$, and (ii) the assignment of colors to specific vertices once the sets of colors are determined. Specifically, we have
	\begin{align}
		H(\bbf_{M_v,M_{v'}}|\bbf_{N_w})& \stackrel{\eqref{entropyprop4}}{\le} H(\bbf_{M_v,M_{v'}}\big|\bar\bbf_{N_w})\nonumber\\
		&\stackrel{\eqref{entropyprop5}}{=}H(\bbf_{M_v,M_{v'}}, \bar\bbf_{M_v}, \bar\bbf_{M_{v'}}\big|\bar \bbf_{N_w})\nonumber\\
		&\stackrel{\eqref{entropyprop1.5},\eqref{entropyprop2}}{\le} H(\bar\bbf_{M_v}, \bar\bbf_{M_{v'}}\big|\bar \bbf_{N_w})+H(\bbf_{M_v,M_{v'}}\big| \bar\bbf_{M_v}, \bar\bbf_{M_{v'}}).\label{21}\
	\end{align}
	
The second term in \eqref{21} is
	\begin{align}
		H(\bbf_{M_v,M_{v'}}\big| \bar\bbf_{M_v}, \bar\bbf_{M_{v'}}) 
		& \stackrel{\eqref{condent}}{=}\sum_{A_0,A_1} 
		H(\bbf_{M_v,M_{v'}}\big|\bar \bbf_{M_v}=A_0,\,\bar \bbf_{M_v'}=A_1)
		\cdot\pr(\bar \bbf_{M_v}=A_0,\,\bar \bbf_{M_v'}=A_1)\nonumber\\
		& \stackrel{\eqref{entropyprop1}}{\le} \sum_{A_0,A_1}\log(|A_0|^{d-1}|A_1|^{d-1})\cdot \pr(\bar \bbf_{M_v}=A_0,\,\bar \bbf_{M_v'}=A_1).\label{22}
\end{align}

\begin{remark}\label{stark} There is no simple improvement possible to this trivial bound in \eqref{22} since there are no edges between $M_v$ and $M_{v'}$. This is in \textit{stark contrast} to the situation in $Q_n$, where there is a perfect matching between $M_v$ and $M_{v'}$, allowing an entropy savings whenever $A_0\cap A_1$ is nonempty (since we cannot choose the same color for both vertices in any of the matching edges). Ultimately, the loss of this small entropy savings is the reason why we are unable to establish \Cref{lem.step1'} for odd $q$.
\end{remark}

Now, plugging all our bounds (\eqref{eq:term1}, \eqref{eq:term1'}, \eqref{21}, and \eqref{22}) into \eqref{18} and simplifying slightly, we have
	
	\begin{align}
		H(\bbf)\ \ &\le \ \ O(d^3)\  + \ \frac{|V^*|}{d}H(\bar\bbf_{M_v}, \bar\bbf_{M_{v'}}\big|\bar \bbf_{N_w}) 
		\ \ +\ \ |V^*|\hspace{-.6ex}\sum_{A_0,A_1} 	\log(h_{A_0,A_1})\cdot \pr(\bar \bbf_{M_v}=A_0,\,\bar \bbf_{M_v'}=A_1),\label{23}
	\end{align}
	where $		h_{A_0,A_1}:= \Big((q-|A_0|)(q-|A_1|)-|\cQ \setminus(A_0\cup A_1)|\Big)\cdot \big(|A_0||A_1|\big)^{1-1/d}.$

		To bound the final sum, we bound $h_{A_0,A_1}$ as follows: first, if $(A_0,A_1)$ is principal, then (recalling $q$ is even)
	\begin{align*}
		h_{A_0,A_1} 
		= \Big(\left(\frac{q}{2}\right)^2-0\Big)\cdot \left(\left(\frac{q}{2}\right)^2\right)^{1-1/d} 
		= \left(\frac{q}{2}\right)^{4-2/d}.
	\end{align*}
	If $(A_0,A_1)$ is not principal, then $h_{A_0,A_1}$ is substantially smaller; to see this, we begin by writing
	\[h_{A_0,A_1}\le \Big((q-|A_0|)(q-|A_1|)-|\cQ \setminus(A_0\cup A_1)|\Big)\cdot \big(|A_0||A_1|\big)\]
	(this is true for all $A_0,A_1$, but it will simplify our computations slightly in the non-principal case). Now, if
	 $(A_0,A_1)$ is not principal but $A_0\cup A_1 = \cQ$, then $|A_0|\neq q/2$ or $|A_1|\neq q/2$. Without loss of generality, say that $|A_0|\neq q/2$. So $(q-|A_0|)|A_0|\leq  \left(q/2\right)^2-\Theta(1)$, and $(q-|A_1|)|A_1|\leq \left(q/2\right)^2 $, giving
	\[h_{A_0,A_1}\le \big((q-|A_0|)|A_0|\big)\cdot\big((q-|A_1|)|A_1|\big)\le \left(\frac{q}{2}\right)^4-\Theta(1).\]
	(Notice that the same statement would \emph{not} be true for odd $q$, as we could take a non-principal partition $(A_0,A_1)$ with $|A_0|, |A_1| = \lceil q/2\rceil$, giving the same bound as in the principal case.)
	And if $(A_0,A_1)$ is not principal and $A_0\cup A_1 \neq \cQ$, then 
	\[h_{A_0,A_1}\le \Big((q-|A_0|)(q-|A_1|)-\Theta(1)\Big)\cdot \big(|A_0||A_1|\big) \le \left(\frac{q}{2}\right)^4-\Theta(1)\]
	in this case as well.

	So in either case, if $(A_0,A_1)$ is not principal, then $h_{A_0,A_1}\le \left(q/2\right)^4-\Theta(1)$, which we will rewrite as $h_{A_0,A_1}\le\left(q/2\right)^{4-2/d}-\gd$ where $\gd=\Theta(1)$ (since $q$ is a constant and $d\rightarrow\infty$). 
Recalling the definition of $\eps$ from \eqref{lem:epsBreakdown}, we may bound the sum in \eqref{23} as follows:
	\[\begin{split}\sum_{A_0,A_1} 	\log(h_{A_0,A_1})\cdot \pr(\bar \bbf_{M_v}=A_0,\,\bar \bbf_{M_v'}=A_1)
		&\ \le \eps\log\left(\left(\frac{q}{2}\right)^{4-2/d}-\gd\right)+(1-\eps)\log \left(\left(\frac{q}{2}\right)^{4-2/d}\right) \\
		&\ =\log\left(\left(\frac{q}{2}\right)^{4-2/d}\right) -\eps \log\left(1-\gd/\left(\frac{q}{2}\right)^{4-2/d}\right)^{-1}\\
		&\ =\log\left(\left(\frac{q}{2}\right)^{4-2/d}\right) -\eps \cdot \Theta(1).
	\end{split}\]	
	Inserting this into \eqref{23}, we have
	\beq{27} H(\bbf) \le O(d^3)+\frac{|V^*|}{d}H(\bar\bbf_{M_v}, \bar\bbf_{M_{v'}}\big|\bar \bbf_{N_w}) +|V^*|\left(\log\left(\left(\frac{q}{2}\right)^{4-2/d}\right) -\eps\cdot \Theta(1)\right).\enq

On the other hand, \eqref{eq:VStarSize} gives $|V^*|\log\left(\left(q/2\right)^{4-2/d}\right) = N\log\left(q/2\right)$. Then combining \eqref{eq:trivH} and \eqref{27}, and solving for $\eps$, we obtain
	\begin{equation}\label{eq:epsSolved}
		\eps\ \le\  O(d^3/N)+O\left(1/d\right)\cdot H(\bar\bbf_{M_v}, \bar\bbf_{M_{v'}}\big|\bar \bbf_{N_w}).
	\end{equation}

	Thus all that remains in order to bound $\eps$ is to analyze the entropy term in \eqref{eq:epsSolved}. Ignoring the conditioning, a naive upper bound is
	\begin{equation}\label{eq:trivHC}
	H(\bar\bbf_{M_v}, \bar\bbf_{M_{v'}}\big|\bar \bbf_{N_w}) \stackrel{\eqref{entropyprop1}}{\le} 2q.
	\end{equation}
Substituting into \eqref{eq:epsSolved}, together with recalling that $N = 2^{d(1-o(1))}$ gives
\beq{eps.o(1)} \eps =O(1/d)~ (=o(1)).\enq 
	
	We will strengthen our bound on $H(\bar\bbf_{M_v}, \bar\bbf_{M_{v'}}\big|\bar \bbf_{N_w})$ via the following key lemma\footnote{This lemma is based on Lemma~4.2 in Engbers and Galvin's paper \cite{EG}, and the proof is very similar. However, there is a small subtle error in the original proof of Lemma~4.2, which can be resolved by slight modifications to the argument, analogous to those made here.} (and the fact that $\eps$ is small). 
	
	\begin{lem}\label{Lem4.2}
		For any principal $(A,B)$,
		\beq{Lem4.2.first} \pr\left(\bar\bbf_{M_{v}}=A,\  \bar\bbf_{M_{v'}}=B\, \big|\, \bar\bbf_{N_{w}}=A\right)\ge 1-\frac{6\eps}{\pr\left(\bar\bbf_{N_{w}}=A\right)},\enq
		and also
		\beq{Lem4.2.second}\sum_{\text{A not principal}} \pr(\bar\bbf_{N_{w}}=A) \le \eps.\enq
	\end{lem}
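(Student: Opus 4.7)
The plan is to prove both inequalities by combining (i) vertex- and edge-transitivity of $\cB_d$ (inherited from coordinate permutations of $Q_n$), with (ii) a short ``principal propagation'' argument along the length-$4$ path in $\cB_d^\rot[V_0]$ from $w$ to $v$ guaranteed by \eqref{def.w}. The second inequality \eqref{Lem4.2.second} will come out as a one-line observation, and the first \eqref{Lem4.2.first} will follow from a small union bound over bad events along that path.

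I begin with \eqref{Lem4.2.second}. If the edge $vv'$ is ideal then $(\bar\bbf_{N_v},\bar\bbf_{N_{v'}})$ is a principal partition, so in particular $\bar\bbf_{N_v}$ is a principal subset of $\cQ$. Contrapositively, $\pr(\bar\bbf_{N_v}\text{ not principal}) \le \pr(vv'\text{ not ideal}) \le \eps$, the last inequality being one of the bounds already recorded in \eqref{lem:epsBreakdown}. Because $\cB_d^\rot \cong \cB_d$ is vertex-transitive and $\bbf$ is uniform on $C_q(\cB_d)$, the distributions of $\bar\bbf_{N_w}$ and $\bar\bbf_{N_v}$ agree, which immediately yields \eqref{Lem4.2.second}.

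For \eqref{Lem4.2.first}, write $w=u_0,u_1,u_2,u_3,u_4=v$ for the length-$4$ path supplied by \eqref{def.w}. The key propagation observation is: on the event
\[
\{\bar\bbf_{N_w}=A\}\ \cap\ \{\text{all four edges }u_i u_{i+1}\text{ are ideal}\}\ \cap\ \{(\bar\bbf_{M_v},\bar\bbf_{M_{v'}})\text{ is principal}\},
\]
one automatically has $(\bar\bbf_{M_v},\bar\bbf_{M_{v'}})=(A,B)$. Indeed, each ideal edge $u_i u_{i+1}$ forces $\bar\bbf_{N_{u_{i+1}}}=\cQ\setminus\bar\bbf_{N_{u_i}}$, so after four (an even number of) flips $\bar\bbf_{N_v}=A$; then $\bar\bbf_{M_v}\subseteq\bar\bbf_{N_v}=A$ combined with $|\bar\bbf_{M_v}|=q/2=|A|$ (supplied by principality of $(\bar\bbf_{M_v},\bar\bbf_{M_{v'}})$) upgrades the inclusion to the equality $\bar\bbf_{M_v}=A$, and hence $\bar\bbf_{M_{v'}}=B$. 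A union bound finishes the job: edge-transitivity of $\cB_d$ gives $\pr(e\text{ not ideal})\le \eps$ for every edge $e$, so ``some path edge is not ideal'' contributes at most $4\eps$, while ``$(\bar\bbf_{M_v},\bar\bbf_{M_{v'}})$ is not principal'' contributes exactly $\eps$ by the definition of $\eps$ in \eqref{lem:epsBreakdown}. Therefore
\[
\pr\bigl(\bar\bbf_{N_w}=A,\ (\bar\bbf_{M_v},\bar\bbf_{M_{v'}})\neq (A,B)\bigr) \le 5\eps \le 6\eps,
\]
and dividing through by $\pr(\bar\bbf_{N_w}=A)$ yields \eqref{Lem4.2.first}.

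I do not anticipate any serious obstacle; the argument is essentially mechanical once the propagation observation is in hand. The two small details worth spelling out carefully are (a) edge-transitivity of $\cB_d$, which follows from the fact that coordinate permutations of $Q_n$ act transitively on the edges of $\cB_d$; and (b) the precise way in which the principality assumption on $(\bar\bbf_{M_v},\bar\bbf_{M_{v'}})$ upgrades the inclusion $\bar\bbf_{M_v}\subseteq A$ to the equality $\bar\bbf_{M_v}=A$. The slack between the $5\eps$ that one actually obtains and the $6\eps$ stated in the lemma is harmless.
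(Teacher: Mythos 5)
Your proof is correct and follows essentially the same route as the paper's: propagate idealness along the length-$4$ path from $w$ to $v$ and union-bound the failure events, each of probability at most $\eps$ by the symmetry of $\cB_d$. The only (harmless) difference is that you pass from $\bar\bbf_{N_v}=A$ to $(\bar\bbf_{M_v},\bar\bbf_{M_{v'}})=(A,B)$ directly via principality of $(\bar\bbf_{M_v},\bar\bbf_{M_{v'}})$ rather than through the edge $vv'$, which is why you obtain $5\eps$ where the paper's bookkeeping gives $6\eps$.
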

	
	\begin{proof}
	Let $w, w_1, w_2, w_3, v$ be a path of length $4$ from $w$ to $v$ in $\cB_d^\rot$. 
Write $Q_{v,A}$ and $R_{v,A}$ for the events $\{\bar \bbf_{N_v}=A\}$ and $\{\bar \bbf_{M_v}=A\}$, respectively. (The event $R_{v,A}$ will be defined only for $v \in V^*$ and its mate, so we don't have to worry about the definition of $M_v$ for other $v$'s.) 
	For two events $P,Q$ we use the shorthand notation $PQ$ for $P \wedge Q$, and $\bar P$ for the complementary event of $P$. We also use $Q_{vv',(A,B)}$ for $Q_{v,A}Q_{v',B}$, and similarly for $R_{vv',(A,B)}$.
	
	With this notation, to prove the first statement of the lemma, we wish to bound the probability
	\[
		\pr\left(R_{vv',(A,B)}\big|\, Q_{w,A}\right)\ =\ 1-\pr\left(\bar R_{vv',(A,B)},\big|\, Q_{w,A}\right) \ =\ 1-\frac{\pr\left(\bar R_{vv',(A,B)} Q_{w,A}\right)}{\pr\left(Q_{w,A}\right)}.
	\]
To bound the probability in the numerator, we begin by observing that
	\beq{bd.3}\bar R_{vv',(A,B)} Q_{w,A} \sub \bar R_{vv',(A,B)} Q_{vv',(A,B)} \vee \bar Q_{vv',(A,B)} Q_{w,A}.\enq
	This does not rely on any special facts about these events; in general, $PQ\ =\ PQ \wedge (R\vee \bar R)\ \sub\ PR \vee \bar RQ$.
	
Moreover, we have
	\beq{bd.1} \pr(\bar R_{vv',(A,B)} Q_{vv',(A,B)}) \le \eps.\enq
	To see this, notice that the event $\bar R_{vv',(A,B)} Q_{vv',(A,B)}$ is equivalent to saying that $(\bar\bbf_{M_{v}},\bar\bbf_{M_{v'}}) \neq (A,B)$, but $(\bar\bbf_{N_{v}},\bar\bbf_{N_{v'}}) = (A,B)$. Since $\bar\bbf_{M_{v}}\sub \bar\bbf_{N_{v}}$ and $\bar\bbf_{M_{v'}}\sub \bar\bbf_{N_{v'}}$, this means that there must be at least one color missing from $\bar\bbf_{M_{v}}\cup\bar\bbf_{M_{v'}}$; thus this event implies that $(\bar\bbf_{M_{v}},\bar\bbf_{M_{v'}})$ is not principal. Recalling that $\eps$ is simply the probability that $(\bar\bbf_{M_{v}},\bar\bbf_{M_{v'}})$ is not principal, this gives \eqref{bd.1}.

	Now to bound the probability of the remaining event, $\bar Q_{vv',(A,B)}Q_{w,A}$, we observe
	\[\bar Q_{vv',(A,B)}Q_{w,A} \sub Q_{w,A}\bar Q_{w_1,B} \vee Q_{w_1,B}\bar Q_{w_2,A} \vee Q_{w_2,A}\bar Q_{w_3,B} \vee Q_{w_3,B}\bar Q_{v,A} \vee Q_{v,A} \bar Q_{v',B},\]
	and 
	\beq{bd.2} \mbox{each of the 5 events on the rhs occurs with probability less than $\eps$}\enq since  
$Q_{w_1,B}\bar Q_{w_2,A}$ implies that $w_1w_2$ is not ideal, etc. Therefore,
	\[\begin{split}
		\pr(\{\bar\bbf_{M_v}=A, \bar\bbf_{M_{v'}}=B\}^c \mid \bar\bbf_{N_w}=A)&=\frac{\pr(\bar R_{vv',(A,B)}Q_{w,A})}{\pr(\bar\bbf_{N_w}=A)} \stackrel{\eqref{bd.3},\eqref{bd.1},\eqref{bd.2}}{\le} \frac{6\eps}{\pr(\bar\bbf_{N_w}=A)}
	\end{split}\]
which concludes \eqref{Lem4.2.first}.
	Also, $\pr(\bar\bbf_{N_{w}}=A) \ge \pr(\bar\bbf_{N_{w}}=A,\, \bar\bbf_{N_{w'}}=\cC \setminus A)$ implies
	\[\begin{split}
		\sum_{\text{$A$ principal}} \pr(\bar\bbf_{N_{w}}=A) &\ge \sum_{\text{$A$ principal}}\pr(\bar\bbf_{N_{w}}=A,\, \bar\bbf_{N_{w'}}=\cC \setminus A)\\
		&=\sum_{\text{$(A,B)$ principal}} \pr(\bar\bbf_{N_{w}}=A,\, \bar \bbf_{N_{w'}}=B)\\
		&= \pr(\text{$e$ is ideal}) \geq 1-\eps
	\end{split}\]
	where the last inequality follows from \eqref{lem:epsBreakdown}.
	\end{proof}

The rest of the proof is just a special case of the proof of \cite[Theorem 1.4]{EG} so we will try to be brief.	
The entropy term in \eqref{eq:epsSolved} can be splited as
	\beq{28.5}\begin{split}
		H(\bar\bbf_{M_v}, \bar\bbf_{M_{v'}}\big|\bar \bbf_{N_w})\ &\stackrel{\eqref{condent}}{=}\sum_{\text{$A$ principal}} H(\bar\bbf_{M_v}, \bar\bbf_{M_{v'}}\big|\bar \bbf_{N_w}=A)\cdot \pr(\bar \bbf_{N_w}=A)\\
		&+\sum_{\text{$A$ not principal}} H(\bar\bbf_{M_v}, \bar\bbf_{M_{v'}}\big|\bar \bbf_{N_w}=A)\cdot \pr(\bar \bbf_{N_w}=A).
	\end{split}\enq	
The second sum in \eqref{28.5} is
	\beq{29} \sum_{\text{$A$ not principal}} H(\bar\bbf_{M_v}, \bar\bbf_{M_{v'}}\big|\bar \bbf_{N_w}=A)\cdot \pr(\bar \bbf_{N_w}=A) \stackrel{\eqref{Lem4.2.second},\eqref{entropyprop1},\eqref{entropyprop1.5}}{\le} 2q\eps;\enq
for the first sum in \eqref{28.5}, note that, by symmetry, $\pr(\bar \bbf_{N_w}=A)$ is equal for each principal $A \sub \cQ$, so by \eqref{Lem4.2.second} and \eqref{eps.o(1)}, we have
	\beq{lb}\pr(\bar \bbf_{N_w}=A)=\gO(1).\enq
Next, for each principal $A$,
	\[\begin{split}
		H(\bar\bbf_{M_v}, \bar\bbf_{M_{v'}}\big|\bar \bbf_{N_w}=A) 
		\stackrel{\eqref{condent}}{=}\sum_{A_0, A_1}-\pr(\bar\bbf_{M_{v}}=A_0,\  \bar\bbf_{M_{v'}}=A_1\big| \bar\bbf_{N_{w}}=A)\cdot \log\big[\pr(\bar\bbf_{M_{v}}=A_0,\  \bar\bbf_{M_{v'}}=A_1\big| \bar\bbf_{N_{w}}=A)\big],
	\end{split}\]
and we use the fact that  $-p\log p\leq H(p)$ for any constant $p \in (0,1)$ to obtain 
	\beq{30} H(\bar\bbf_{M_v}, \bar\bbf_{M_{v'}}\big|\bar \bbf_{N_w}=A) \le \sum_{A_0, A_1} H\Big(\pr(\bar\bbf_{M_{v}}=A_0,\  \bar\bbf_{M_{v'}}=A_1\, \big|\, \bar\bbf_{N_{w}}=A)\Big).\enq
If $(A_0,A_1)=(A,\cQ \setminus A)$, then \Cref{Lem4.2} gives
	\[\pr(\bar\bbf_{M_{v}}=A_0,\  \bar\bbf_{M_{v'}}=A_1\, \big|\, \bar\bbf_{N_{w}}=A)
	\ge 1- \frac{6\eps}{\pr(\bar\bbf_{N_{w}}=A)}\stackrel{\eqref{eps.o(1)},\eqref{lb}}{\ge} 1/2.\]
On the other hand, if $(A_0, A_1) \ne (A, \cQ \setminus A)$ then by \Cref{Lem4.2}
	\[\pr(\bar\bbf_{M_{v}}=A_0,\  \bar\bbf_{M_{v'}}=A_1\, \big|\, \bar\bbf_{N_{w}}=A)
	\le \frac{6\eps}{\pr(\bar\bbf_{N_{w}}=A)}\le 1/2.\]
Since $H(p)$ is increasing for $p\leq 1/2$, and is symmetric about $1/2$, we may bound each of the entropy terms in the rhs of \eqref{30} by $H(6\eps/\pr(\bar\bbf_{N_{w}}=A))$. Thus we may bound the first sum in \eqref{28.5} as follows:
	\beq{32}\begin{split}
		\sum_{\text{$A$ principal}}H(\bar\bbf_{M_v}, \bar\bbf_{M_{v'}}\big|\bar \bbf_{N_w}=A)\cdot \pr(\bar \bbf_{N_w}=A)\ 
		&\ \le 2^{2q} \sum_{\text{$A$ principal}}H\left(\frac{6\eps}{\pr(\bar \bbf_{N_w}=A)}\right)\cdot\pr(\bar \bbf_{N_w}=A)\\
		&=  O(1)\cdot H\left(\frac{6\eps}{\pr(\bar \bbf_{N_w}=A)}\right)\\	
		& \le 	C \eps \log(C/\eps)
	\end{split}\enq
for some constant $C$, where the last inequality uses the fact that $H(x)\le 2x \log (1/x)$ for $x \le 1/2$ (and \eqref{lb}).
	
	Finally, we combine \eqref{28.5}, \eqref{29}, and \eqref{32} to obtain
\[		H(\bar\bbf_{M_v}, \bar\bbf_{M_{v'}}\big|\bar \bbf_{N_w})
		\le 2q\eps + C \eps \log(C/\eps),\]
which, together with \eqref{eq:epsSolved}, gives
\[	\eps\ \le\  O(d^3/N)+O\left(1/d\right)\cdot \big[2q\eps + C \eps \log(C/\eps)\big].\]
Then as $N= 2^{\Theta(d)}$, we solve to obtain $\eps =2^{-\gO(d)}$, completing the proof of \Cref{Lem4.2}.

\section{Proof of Lemma \ref{lem.step2}}\label{sec.step2}

\subsection{Warming-up}\label{subsec.entropy function} Before we start in earnest, we recall (e.g. from \cite{KPq}) how the entropy function gives an upper bound on $c_q(G)$ for any $d$-regular bipartite graph $G$ (with, say, a bipartition $\cD \cup \cU$). The argument in this section will be used in a refined form in the proof in \Cref{subsec.step2.pf}. For $\bbf \in C_q(G)$ chosen uniformly at random, we have
\beq{case1.f} \begin{split}\log c_q(G) \stackrel{\eqref{entropyprop1}}{=} H(\bbf)&\stackrel{\eqref{entropyprop2}}{=}H(\bbf_\cD)+H(\bbf_\cU|\bbf_\cD)\le \sum_{u \in \cU}\left[\frac1dH(\bbf_{N_u})+H(\bbf_u|\bbf_{N_u})\right];
\end{split}\enq
here the inequality uses
Lemma~\ref{lem:Sh} with
\beq{alphaS}
\alpha_S=\left\{\begin{array}{ll}
1/d&\mbox{if $S=N_u$ for some $u\in \cU$;}\\
0&\mbox{otherwise}
\end{array}\right.
\enq
for the first term, and \eqref{entropyprop3} for the second term.

Notice that we can always give the following simple upper bound on the terms in \eqref{case1.f}; this bound is too weak to be useful in general, but in the proof that follows, we will use it on some small sets of vertices.

\begin{prop}\label{entropy.bm}
For $\bbf$ drawn from \emph{any} probability distribution on $C_q(G)$ and $u \in V$, 
\beq{eq.Tu.def}(T(u):=) \; \frac1dH(\bbf_{N_u})+H(\bbf_u|\bbf_{N_u}) \le \log (q/2)^2 +O(1/d).\enq
\end{prop}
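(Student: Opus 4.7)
The plan is to introduce the auxiliary random variable $Z := \bbf(N_u)$, the (random) \emph{set} of colors used on $N_u$, and reduce everything to a conditional analysis on the value of $Z$. Since $Z$ is a function of $\bbf_{N_u}$, property \eqref{entropyprop5} gives $H(\bbf_{N_u}) = H(\bbf_{N_u}, Z)$, which by the chain rule \eqref{entropyprop2} equals $H(Z) + H(\bbf_{N_u} \mid Z)$; similarly, \eqref{entropyprop4} gives $H(\bbf_u \mid \bbf_{N_u}) \le H(\bbf_u \mid Z)$. Substituting into the definition of $T(u)$,
\[
T(u) \;\le\; \frac{1}{d}\,H(Z) \;+\; \frac{1}{d}\,H(\bbf_{N_u} \mid Z) \;+\; H(\bbf_u \mid Z).
\]

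I will absorb the first term into the $O(1/d)$ error: since $Z$ takes values in $2^{\cQ}$, \eqref{entropyprop1} gives $H(Z)\le q = O(1)$, hence $H(Z)/d = O(1/d)$. For the other two terms, I expand each conditional entropy via \eqref{condent} and bound each conditional piece by the range of the variable in question. Given $\{Z = A\}$, the coloring on $N_u$ uses only colors from $A$, and because $G$ is bipartite the set $N_u$ is independent (so there are no further constraints among its entries); thus the range of $\bbf_{N_u}$ on $\{Z = A\}$ has size at most $|A|^d$, yielding $H(\bbf_{N_u} \mid Z = A) \le d\log|A|$ by \eqref{entropyprop1}. Likewise $\bbf_u$ must avoid $A$, so $\bbf_u \in \cQ \setminus A$ and $H(\bbf_u \mid Z=A) \le \log(q - |A|)$. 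Averaging over $A$,
\[
\frac{1}{d}\,H(\bbf_{N_u} \mid Z) + H(\bbf_u \mid Z) \;\le\; \sum_{A \sub \cQ} \pr(Z=A)\,\log\bigl[|A|(q-|A|)\bigr].
\]

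The final ingredient is the arithmetic inequality $k(q-k) \le (q/2)^2$ for every integer $0 \le k \le q$, which bounds every summand above by $\log(q/2)^2$. Combining with the $O(1/d)$ bound on $H(Z)/d$ yields $T(u) \le \log(q/2)^2 + O(1/d)$, as required. I do not expect any genuine obstacle: all steps use basic entropy identities from Lemma~\ref{entropyprop} and the trivial bound $k(q-k) \le (q/2)^2$, and the hypothesis that $\bbf$ is drawn from an \emph{arbitrary} distribution on $C_q(G)$ enters only through $\bbf$ being a proper coloring (so that $\bbf_u \notin Z$ deterministically).
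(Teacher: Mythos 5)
Your proposal is correct and is essentially identical to the paper's proof: both condition on the color set $\bbf(N_u)$, bound $H(\bbf_{N_u}\mid \bbf(N_u)=C)\le d\log|C|$ and $H(\bbf_u\mid\bbf(N_u)=C)\le\log(q-|C|)$, apply $x(q-x)\le(q/2)^2$, and absorb $H(\bbf(N_u))/d=O(1/d)$ into the error term. (The remark about $N_u$ being independent is unnecessary — the range bound $|A|^d$ follows directly from the definition of $\bbf(N_u)=A$ — but it is harmless.)
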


\begin{proof}
Recall that $\bbf(N_u)=\{\bbf_v: v\in N_u\}$, i.e., the set of colors used on $\bbf_{N_u}$.
Clearly, $\bbf(N_u)$ is determined by $\bbf_{N_u}$, and then we have
\begin{eqnarray}
T(u)
&\stackrel{\eqref{entropyprop2}}{=}&\tfrac{1}{d}[H(\bbf(N_u))+H(\bbf_{N_u}|\bbf(N_u))]+H(\bbf_u|\bbf_{N_u})\nonumber\\
&\stackrel{\eqref{entropyprop4}}{\le}&\tfrac{1}{d}H(\bbf(N_u))+\tfrac{1}{d}H(\bbf_{N_u}|\bbf(N_u))+H(\bbf_u|\bbf(N_u)).\label{def_tu}
\end{eqnarray}
Note that for each possible value $C$ of $f(N_u)$,
\begin{eqnarray*}
H(\bbf_u|\bbf(N_u)=C) &\leq & \log(q-|C|),\\
H(\bbf_{N_u}|\bbf(N_u)=C) &\leq & d\log|C|.
\end{eqnarray*}
Since $\log x+\log (q-x)\leq \log \qq$ for $x \in \mathbb N$,
the last two terms in \eqref{def_tu} are bounded by
\[
\sum_C\pr(\bbf(N_u)=C)\left[\frac{1}{d}H(\bbf_{N_u}|\bbf(N_u)=C)+H(\bbf_u|\bbf(N_u)=C)\right]\leq \log \qq,
\]
yielding the proposition (since $H(\bbf(N_u))=O(1)$).
\end{proof}

\subsection{Proof overview} 
In Step 1 (\Cref{lem.step1}), we showed that the vast majority of colorings in $C_q(\cB_d)$ admit a ground state with exponentially small flaws. Our goal in this section is to bound the number of such colorings that disagree with their ground state exactly at a particular set $X$ (ranging over all 2-linked sets $X$ with moderately-sized neighborhoods). More precisely, we wish to bound the quantity
\beq{quantity}
\sum_{X \in \cH(g)} \chi_{A,B}(X),
\enq
for all $g$ and $(A,B)$ as in \Cref{lem.step2}.
As is typical as in this line of work, directly bounding the number of colorings $f$ in $\chi_{A,B}(X)$ is a difficult task. The main idea of the current proof, which is inspired by \cite{KPq, misqn}, is to bound the number of colorings $f$ that admit a given pair $(F,S)$ as a $\psi$-approximating pair of $X_{A,B}(f)$. (Note: \Cref{sec.prelim} has definitions and background on $\psi$-approximating pairs.) 

Roughly speaking, for a given pair $(F,S)$, we bound the entropy of a random coloring $\mathbf{f}$ corresponding to $(F,S)$, breaking our analysis into three cases. In each case, we use the chain rule of conditional entropy (as in \eqref{case1.f} above) to first ``expose" part of the coloring, then use this to find some ``entropy savings" as we color the remaining vertices. The exact order in which we expose the vertices of the coloring will be different in each case; very roughly, the three cases will depend on the difference in size between $F$ and $S$ and on the amount of error obtained in approximating the flaw $X$ by the pair $(F,S)$. In each case, $(F,S)$ provides a different type of "resources" that we can use to obtain entropy savings. 

We emphasize that the containers $(F,S)$ are crucially used to lead entropy savings; a ``vanilla" entropy approach (as in \Cref{subsec.entropy function}) gives a much weaker bound, and a careful combination of entropy and containers is key in this step.

We also remark that, as pointed out in \Cref{sec.intro}, the proof in this section can easily be extended to odd $q$.

\subsection{Notation and proof setup}

Throughout this section, we fix an integer $g$ and a principal partition $(A,B)$ as in \Cref{lem.step2}. In order to use approximating pairs, we must specify the parameter $\psi$; let $\psi$ be an arbitrary number with
\beq{psi.bd} \log^3 d \ll \psi \ll d/\log^2 d,\enq 
and let $\cU(g)$ be the corresponding set of pairs $(F,S)$ guaranteed by Lemma~\ref{lem:container}. 

For a pair $(F, S)\in \cU(g)$, we define 
\[
\mathcal{I}_{F, S}:=\{X\in\cH(g): \mbox{$(F,S)$ is a $\psi$-approximating pair of $X$}\}.
\]
Observe that with this notation, \eqref{quantity} can be broken down according to approximating pairs as follows:
\beq{qtyBreakdown}
	\sum_{X \in \cH(g)} \chi_{A,B}(X) = \sum_{(F,S)\in\cU(g)}\ \sum_{X\in \cI_{F,S}} \chi_{A,B}(X)
\enq
Before bounding this sum (the goal of this section), we will break it down one step further  according to the size of a particular set related to each $X$ (yielding \eqref{qtyFinalBdown} below). To do so, we will need some additional notation; we also take this opportunity to introduce a variety of notational conventions for this section.

Given a pair $(F,S)\in \cU(g)$ and a set $X\in \cI_{F,S}$, we will adopt the following:

\begin{itemize}
	\item We use $\cD$ and $\cU$ for $\cL_{d-1}$ and $\cL_d$ (for notational simplicity) 
	\item $X_1:=X \cap \cD$, \ $S_1:=S \cap \cD$, and $F_1:=F \cap \cU$
	\item $X_2:=X \cap \cU$, \ $S_2:=S \cap \cU$, and $F_2:=F \cap \cD$
	\item $x_1:=|X_1|$, \ $s_1:=|S_1|$, and $f_1:=|F_1|$
	\item $x_2:=|X_2|$, \ $s_2:=|S_2|$, and $f_2:=|F_2|$
	\item $\eps:=1/\log^{1.5} d$ 
	\item $\eps':=1/\log^2 d$
	\item We assume that $|N(X_1)| \ge |N(X_2)|$ (thus $|N(X_1)| \ge g/2$), as the other case may be handled identically by switching the roles of $\cU$ and $\cD$
	\item $g_1:=|N(X_1)|$; note that the previous convention guarantees $g_1 \in [g/2,g]$
\end{itemize} 
Now for any fixed $g_1\in [g/2, g]$, we define the set of colorings
\beq{f.chosen}
\cF_{F, S}(g_1):=\big\{f : \left(\mbox{$f \in \chi_{A,B}(X)$ for some $X\in\mathcal{I}_{F, S}$}\right) \,\wedge\, \left(|N(X_1)|=g_1\right)\big\}. \enq

The key of the proof is to establish an upper bound for each $|\cF_{F, S}(g_1)|$; concretely, we can break down the sum \eqref{qtyBreakdown} as follows

\beq{qtyFinalBdown}\sum_{X\in\cH(g)}\chi_{A, B}(X)\ = \
	\sum_{(F,S)\in\cU(g)}\ \sum_{X\in \cI_{F,S}} \chi_{A,B}(X)\ =\ \sum_{(F, S)\in \cU(g)}\sum_{g_1\in[g/2, g]}2\cdot |\cF_{F, S}(g_1)|
\enq
(note: the factor of 2 above comes from switching the roles of $\cU$ and $\cD$ in the case where $|N(X_1)| \le |N(X_2)|$). The remainder of this section is dedicated to bounding the terms of this sum, and we will divide our analysis into three cases, detailed below.

As a final note, given a coloring $f$, we call a vertex $v$ is \textit{good} if $f$ agrees with $(A,B)$ at $v$ (and \textit{bad} otherwise). 
We will often use the easy fact that for a random coloring $\bbf$ drawn from \textit{any} probability distribution on $C_q(\cB_d)$, and any $v\in V$,
\beq{triv.bd} \mbox{$H(\bbf_v\,|\,v \text{ is good}) \le \log (q/2)$.}
\enq

\subsection{Proof of \Cref{lem.step2}}\label{subsec.step2.pf}

We now bound the terms $|\cF_{F, S}(g_1)|$ in \eqref{qtyFinalBdown}. Our strategies will vary depending on the values of $g_1 - f_1$ and $f_1 - s_1$, and we will give a different bound on $|\cF_{F, S}(g_1)|$ in each of the three following cases:
\begin{itemize}
	\item[\textbf{Case 1.}] $g_1-f_1<\eps g_1$
	\item[\textbf{Case 2.}] $g_1-f_1 \ge \eps g_1$\, and\, $f_1-s_1 < \eps' g_1$
	\item[\textbf{Case 3.}]	$f_1-s_1 \ge \eps' g_1$
\end{itemize}

\mbox{}\\\noindent\textbf{Case 1. $g_1-f_1<\eps g_1$.} 

We first claim that given a pair $(F_1,S_1)$, 
\beq{NX.cost} \mbox{the number of possibilities for $N(X_1)$  is $2^{o(g_1)} ~ (=2^{o(g)})$.} \enq
 Indeed, \eqref{approx1} implies that $N(X_1)$ is a subset of $N(S_1)$, which is a set of size at most $ds_1 \stackrel{\eqref{sf.ub}}{\le} (1+o(1))dg_1$. Since $F_1 \sub N(X_1)$, $N(X_1)$ is fully determined by $N(X_1) \setminus F_1$, which has size $g_1-f_1<\eps g_1$. Therefore, the number of choices for $N(X_1) \setminus F_1$ (and hence, for $N(X_1)$) is at most 
\[{(1+o(1))dg_1 \choose \le \eps g_1}\le \exp_2[O(\eps g_1\log(d/\eps))]
{=}2^{o(g_1)}\]
(where the final equality is from $\eps:=1/\log^{1.5} d$). Note that $N(X_1)$ does not necessarily determine $X_1$; it only gives the \textit{closure} of $X_1$, i.e., \[[X_1]:=\{v \in \cD:N(v) \sub N(X_1)\}.\] 

Now, fix $G_1$ to be any of the $2^{o(g_1)}$ possible choices for $N(X_1)$, and let $\bbf$ be uniformly chosen from among corresponding colorings, that is, from the set
\[\cF_{F, S}(g_1)\,\cap\, \{\mbox{$f \in \chi_{A,B}(X)$ :  $X\in\mathcal{I}_{F, S}$ and $|N(X_1)|=G_1$}\}.\]  

We bound the entropy of $\mathbf{f}$ as follows:
\begin{eqnarray}
H(\bbf)&\stackrel{\eqref{entropyprop2}}{=}& H(\bbf_\cD)+H(\bbf_\cU|\bbf_\cD)\nonumber\\
&\le & \sum_{u \in G_1 \cup S_2}\frac1dH(\bbf_{N_u}) + \sum_{v \in \cD}\left(1-\frac{d_{G_1 \cup S_2}(v)}{d}\right)H(\bbf_v)+\sum_{u\in\cU}H(\bbf_u|\bbf_{N_u})\nonumber\\
&\le & \sum_{u \in G_1 \cup S_2}T(u)+\sum_{u \in \cU \setminus (G_1 \cup S_2)} H(\bbf_u)+\sum_{v \in \cD}\left(1-\frac{d_{G_1 \cup S_2}(v)}{d}\right)H(\bbf_v),\label{case1.bd}
\end{eqnarray}
where the first inequality uses Lemma~\ref{lem:Sh} with $\alpha_S=1/d$ if $S=N_u$ for some $u \in G_1 \cup S_2$, and $\alpha_v=1-d_{G_1 \cup S_2}(v)/d$ for each singleton $v \in \cD$ (so that \eqref{fractiling} is satisfied).
We will bound each term of \eqref{case1.bd} individually.\\

\nin (i) \textit{First term of \eqref{case1.bd}:} We define the set
\begin{equation}\label{xminus}
[X_1]^-=\{u \in V: N(u) \sub [X_1]\},
\end{equation} 
and note that $[X_1]^-\subseteq G_1$. , 
If $u \in [X_1]^- \cup (S_2 \setminus G_1)$, then we use the naive bound in \Cref{entropy.bm} to obtain

\beq{case 1. first term 1} T(u) \le \log \qq +O(1/d).\enq

For the remaining vertices $u\in G_1\cup S_2$ (that is, those vertices in $G_1\setminus [X_1]^-$), we can achieve a stronger bound on $T(u)$:

\begin{prop}\label{prop5.3} 
	For $u \in G_1 \setminus [X_1]^-$               with $d'(u):=d-d_{[X_1]}(u)$,
\[ \frac{1}{d}H(\bbf_{N_u}|\bbf(N_u))+H(\bbf_u|\bbf(N_u))=\log \qq-\gO(d'(u)/d). \]
\end{prop}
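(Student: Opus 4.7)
The plan is to derive a pointwise entropy bound parametrized by the color set $C:=\bbf(N_u)$ and then average. I would first extract two structural facts from the hypothesis $u\in G_1\setminus [X_1]^-$. Because $u\in G_1=N(X_1)$, the vertex $u$ is adjacent to some $x\in X_1$, and $x\in\cD$ is bad, so $\bbf_x\in A$; hence every realized color set satisfies $|C\cap A|\ge 1$. Because $u\notin [X_1]^-$, the set $V_B':=N_u\setminus [X_1]$ has size $d'(u)\ge 1$; each $v\in V_B'$ lies outside $X_1$ (since $X_1\subseteq[X_1]$ trivially), so $v$ is good and $\bbf_v\in B$ almost surely, giving $|C\cap B|\ge 1$.

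Next I would apply subadditivity of conditional entropy. For each $C$ with $\pr(\bbf(N_u)=C)>0$, the color restrictions above yield
\[
H\bigl(\bbf_{N_u}\mid\bbf(N_u)=C\bigr)\ \le\ \sum_{v\in V_B'}\log|B\cap C|+\sum_{v\in N_u\setminus V_B'}\log|C|\ =\ d'(u)\log|B\cap C|+(d-d'(u))\log|C|.
\]
Combining with $H(\bbf_u\mid\bbf(N_u)=C)\le\log(q-|C|)$, and writing $a=|C\cap A|$, $b=|C\cap B|$, $\beta=d'(u)/d$, the left-hand side of the proposition is bounded pointwise (in $C$) by
\[
\beta\log b+(1-\beta)\log(a+b)+\log(q-a-b).
\]

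The final step is purely algebraic. I would rewrite this expression as
\[
\log\bigl[(a+b)(q-a-b)\bigr]\ -\ \beta\log\bigl(1+a/b\bigr),
\]
and bound each piece separately: $x(q-x)\le(q/2)^2$ attained at $x=q/2$ gives $\log[(a+b)(q-a-b)]\le\log\qq$, while $a\ge 1$ combined with $b\le|B|=q/2$ gives $a/b\ge 2/q$, so $\log(1+a/b)\ge\log(1+2/q)$. Since $q$ is fixed, $\log(1+2/q)$ is a positive constant, producing savings of $\gO(\beta)=\gO(d'(u)/d)$. Averaging over $C$ preserves the inequality and finishes the proof.

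The main obstacle I anticipate is ensuring that \emph{both} $a\ge 1$ and $b\ge 1$ are guaranteed under the hypothesis: dropping either (say, allowing $a=0$ with $b=q/2$) leaves the first term at its unpenalized maximum $\log\qq$ and erases the $\gO(\beta)$ savings entirely. This is precisely why the two ingredients of $u\in G_1\setminus[X_1]^-$ are both essential — $u\in G_1$ supplies the bad neighbor (yielding $a\ge 1$), and $u\notin[X_1]^-$ supplies the forced-good neighbors in $V_B'$ (yielding $b\ge 1$).
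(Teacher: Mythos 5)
Your proof is correct and follows essentially the same route as the paper's: condition on the color set $C=\bbf(N_u)$, use $u\sim X_1$ to force $C\cap A\neq\emptyset$ and $u\notin[X_1]^-$ to force a nonempty set of good neighbors whose colors lie in $B\cap C$, then bound the per-vertex ranges. The only difference is cosmetic — the paper splits into principal and non-principal $C$ (getting $\Omega(1)$ savings in the latter case), whereas your single algebraic identity $\log[(a+b)(q-a-b)]-\beta\log(1+a/b)$ handles both cases uniformly.
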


\begin{proof}
Note that if a color set $C \sub \cQ$ is not principal, i.e., $c:=|C|\neq q/2$, then
\beq{nonprinc}\begin{split} \frac{1}{d}H(\bbf_{N_u}|\bbf(N_u)=C)+H(\bbf_u|\bbf(N_u)=C)& \le \log c +\log (q-c) =\log \qq-\gO(1).\end{split}\enq
Next, suppose $\bbf(N_u)=C$ for some principal $C$, i.e., $c= q/2$. Note that $u \in G_1 \setminus [X_1]^-$ implies $u \sim X_1$ and $u \sim \cD \setminus X_1$, so in particular $C$ intersects both $A$ and $B$. Let $C' = C \cap B ~ (\ne \emptyset)$ and $c'=|C'| (\le c-1)$. Note that each vertex in $N_u \setminus [X_1]$ is good, so it has at most $c'$ color choices. For vertices in $[X_1]$, we apply the naive upper bound $c~ (=q/2)$ for the number of possible color choices. So we have
\beq{princ}\begin{split} \frac{1}{d}H(\bbf_{N_u}|\bbf(N_u)=C)+H(\bbf_u|\bbf(N_u)=C)& \le \frac{1}{d}\left[(d-d'(u))\log c+d'(u)\log(c')\right]+\log (q-c)\\
&\le \log c+\frac{d'(u)}{d}\log(1-1/c)+\log(q-c)\\
&=\log\qq-\gO(d'(u)/d).\end{split}\enq
Combining \eqref{nonprinc} and \eqref{princ}, we have
\[ \begin{split} \frac{1}{d}H(\bbf_{N_u}|\bbf(N_u))+H(\bbf_u|\bbf(N_u))&=\sum_C\pr(\bbf(N_u)=C)\left[\frac{1}{d}H(\bbf_{N_u}|\bbf(N_u)=C)+H(\bbf_u|\bbf(N_u)=C)  \right]\\
&\le \log \qq-\gO(d'(u)/d). \end{split} \]

\vspace{-.6cm}

\end{proof}

To finish bounding the first term of \eqref{case1.bd}, it remains to estimate the sizes of the sets $[X_1]^- \cup (S_2 \setminus G_1)$ (where we must use a naive bound) and $G_1\setminus [X_1]^-$ (where we may use the stronger bound from the above proposition). And this will follow from the expansion of the graph; note that by Lemma \ref{lem:isoper} (iii) (and the bound on $g$ in \Cref{lem.step2}.), 
\beq{bound on g} |N(X_1)|\ge (1+\gO(1))|[X_1]|\ge (1+\gO(1))|X_1|. \enq

Let $x_1^-:=|[X_1]^-|$. By \eqref{def_tu} and \Cref{prop5.3},
\beq{case 1. first term 2}\begin{split}\sum_{u \in G_1 \setminus [X_1]^-}T(u) &\le \sum_{u \in G_1 \setminus [X_1]^-} \left[ \frac{1}{d}H(\bbf(N_u))+\log \qq-\gO(d'(u)/d)\right]\\
&\le O(g/d) +(g_1-x_1^-)\log \qq -\gO(1/d)\cdot\sum_{u \in G_1 \setminus [X_1]^-}(d-d_{[X_1]}(u))\\
&=O(g/d)+(g_1-x_1^-)\log \qq-\gO(1/d)\cdot d(g_1-|[X_1]|)\\
&\stackrel{\eqref{bound on g}}{=}(g_1-x_1^-)\log\qq-\gO(g).\end{split}\enq
Note by \eqref{sf.ub} that $|G_1 \cup S_2|\leq g_1+(1 + o(1))g_2\leq (1 + o(1))g$.
Combining \eqref{case 1. first term 1} and \eqref{case 1. first term 2}, we have
\[\sum_{u \in G_1\cup S_2} T(u) \le |G_1 \cup S_2|\log \qq + O(g/d)-\gO(g)= |G_1 \cup S_2|\log \qq - \gO(g).\]

\nin (ii) \textit{Second term of \eqref{case1.bd}}: If $u \not \in S_2$ then $u$ is good, so $H(\bbf_u) \le \log (q/2)$. Therefore,
\[\sum_{u \in \cU \setminus (G_1 \cup S_2)} H(\bbf_u) \le (N/2-|G_1 \cup S_2|)\log(q/2).\]

\nin (iii) \textit{Last term of \eqref{case1.bd}}:  Observe that for any $v \in \cD$,
\beq{eq.1}\left(1-\frac{d_{G_1 \cup S_2}(v)}{d}\right)H(\bbf_v) \le \left(1-\frac{d_{G_1 \cup S_2}(v)}{d}\right)\log\left(\frac{q}{2}\right);\enq
indeed, if $v \in [X_1]$, then $d_{G_1}(v)=d$ so the inequality holds (with equality); if $v \not\in [X_1]$, then $v$ is good, so $H(f_v) \le \log(q/2)$. By this observation, the last term of \eqref{case1.bd} is at most
\[\frac{1}{d}\sum_{v \in \cD} (d-d_{G_1 \cup S_2}(v))\log\left(\frac{q}{2}\right) \le \log \left(\frac{q}{2}\right) \cdot \frac{1}{d} \cdot d\left(\frac N2-|G_1 \cup S_2|\right)=\log\left(\frac{q}{2}\right)\left(\frac N2-|G_1 \cup S_2|\right).\]
Combining the bounds in (i)-(iii), \eqref{case1.bd} is at most
\beq{step2.conclusion1}|G_1 \cup S_2|\log\left(\frac{q}{2}\right)^2-\gO(g)+\left(\frac N2-|G_1 \cup S_2|\right)\log \left(\frac{q}{2}\right) +\left(\frac N2-|G_1 \cup S_2|\right)\log \left(\frac{q}{2}\right)=N\log\left(\frac{q}{2}\right)-\gO(g).\enq
Finally, the combination of \eqref{NX.cost} and \eqref{step2.conclusion1} gives that in Case 1, i.e., $g_1 - f_1 < \eps g_1$, the number of colorings in $\cF_{F, S}(g_1)$ is at most \[
2^{N\log(q/2)-\gO(g)}.\]

\noindent \textbf{Case 2. $g_1-f_1 \ge \eps g_1$ and $f_1-s_1 < \eps' g_1$}  

 Recall that we are given $(F,S)$, thus $(F_1,S_1)$ and $(F_2,S_2)$. Note that for any coloring $f\in \cF_{F, S}(g_1)$,
\beq{s gives g} \mbox{if $f_{S_1}$ is specified, then this gives $N(X_1)$,}\enq
because $f_{S_1}$ gives $X_1$ by $X_1=\{v\in S_1: f_v\in A\}$, thus $N(X_1)$. Now let $\bbf$ be uniformly chosen from $\cF_{F, S}(g_1)$ ; our plan for Case 2 is to \textit{first} disclose information on $\bbf_{S_1}$ to specify $N(X_1)$ (not $\bbf_{N(X_1)}$), and then take advantage of the fact that $N(X_1) \setminus F_1$ is somewhat large.

 We first specify $\bbf$ on $F_1 \cup S_1$, whose entropy cost is
\[\begin{split}
H(\bbf_{F_1 \cup S_1})&\stackrel{\eqref{entropyprop2}}{=}H(\bbf_{S_1})+H(\bbf_{F_1}|\bbf_{S_1})\\
&\stackrel{(\dagger)}{\le} \frac1d \sum_{u \in F_1} H(\bbf_{N_u \cap S_1})+\sum_{v \in S_1} \left(1-\frac{d_{F_1}(v)}{d} \right)H(\bbf_v)+\sum_{u \in F_1} H(\bbf_u|\bbf_{N_u \cap S_1})\\
&= \sum_{u \in F_1} \left[ \frac1d H(\bbf_{N_u \cap S_1})+H(\bbf_u|\bbf_{N_u \cap S_1})\right]+\sum_{v \in S_1} \left(1-\frac{d_{F_1}(v)}{d} \right)H(\bbf_v)
\end{split}\]
where for $(\dagger)$ we apply Lemma \ref{lem:Sh} with $\alpha_T=1/d$ if $T=N_u \cap S_1$ for some $u \in F_1$ and $\alpha_v=1- d_{F_1}(v)/d$ for each singleton $v \in S_1$ (so that \eqref{fractiling} is satisfied). Note that, for each $u \in F_1$ and a possible value $C$ for $\bbf(N_u \cap S_1)$, a similar argument as in the proof of Proposition \ref{entropy.bm} gives that
\[\begin{split}
\frac1d H(\bbf_{N_u \cap S_1})+H(\bbf_u|\bbf_{N_u \cap S_1})
&\le \frac1d H(\bbf(N_u \cap S_1))+ \frac1d H(\bbf_{N_u \cap S_1}|\bbf(N_u \cap S_1)) +H(\bbf_u|\bbf(N_u \cap S_1))\\
& \le  O(1/d) + \log(q/2)^2 .
\end{split}\]
Also note by Observation~\ref{prop:SFappro1} and \eqref{approx2} that
\[\sum_{v \in S_1} \left(1-\frac{d_{F_1}(v)}{d} \right)H(\bbf_v) \le \sum_{v \in S_1} \left(\psi/d\right)H(\bbf_v) =O\left(g\psi/d\right),\]
thus
\beq{tired'} H(\bbf_{F_1 \cup S_1}) \le f_1\log(q/2)^2+O(g\psi/d).\enq

In what follows we use the random variable $\tilde \bbf:=\bbf\mid f_{F_1 \cup S_1}$ for the distribution of $\bbf$ conditioned on the event that we have specified colors on $F_1 \cup S_1$. (Eventually we give an upper bound on $\tilde \bbf_{(\cD \cup \cU) \setminus (F_1 \cup S_1)}$ that is valid regardless of the coloring of $F_1 \cup S_1$.) We use $G_1$ for $N(X_1)$ given by the color specification on $S_1$ (see \eqref{s gives g}). Below is our key lemma.

\begin{lem}\label{end}
For any $u \in G_1 \setminus F_1$,
\beq{end.st}\frac{1}{d} H(\tilde \bbf_{N_u \setminus S_1})+H(\tilde \bbf_u|\tilde \bbf_{N_u\setminus S_1}) \le \log \qq-\gO(1).\enq
\end{lem}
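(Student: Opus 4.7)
The plan is to decompose the joint entropy by introducing the \emph{color set} $T := \tilde\bbf(N_u \setminus S_1)$ as an auxiliary variable, and then to exploit the fact that $T$ lies entirely in $B$ while $u$ has an already-exposed neighbor whose color lies in $A$.

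First I will collect the relevant structural facts about $u$. Since $u \in G_1 \setminus F_1 \subseteq \cU$, all of $N_u$ lies in $\cD$. The condition $u \notin F$ combined with \eqref{approx3} gives $d_{S_1}(u) \le \psi$, so $|N_u \setminus S_1| \ge d - \psi > 0$. Because $X \subseteq S$ by \eqref{approx1}, we have $X_1 \subseteq S_1$, and therefore every vertex in $N_u \setminus S_1$ lies in $\cD \setminus X_1$; such a vertex is \emph{good}, so its color lies in $B$. Thus $T \subseteq B$, and in particular $t := |T| \in \{1, \dots, q/2\}$. Finally, $u \in N(X_1)$ furnishes a vertex $v_0 \in X_1 \cap N_u$, which lies in $S_1$, so its color $a_0 := \tilde\bbf_{v_0}$ is fixed by the conditioning; since $v_0$ is \emph{bad}, $a_0 \in A$.

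Next I will apply the chain rule via $T$. Using that $T$ is determined by $\tilde\bbf_{N_u \setminus S_1}$, together with \eqref{entropyprop4}, we get
\[
\tfrac{1}{d}H(\tilde\bbf_{N_u \setminus S_1}) + H(\tilde\bbf_u \mid \tilde\bbf_{N_u \setminus S_1}) \le \tfrac{1}{d}H(T) + \tfrac{1}{d}H(\tilde\bbf_{N_u \setminus S_1} \mid T) + H(\tilde\bbf_u \mid T).
\]
The first term is $O(1/d)$ since $T \subseteq B$ has constant-size range. For a fixed value $T = C$ of size $t$, the bound $H(\tilde\bbf_{N_u \setminus S_1} \mid T = C) \le |N_u \setminus S_1|\log t \le d\log t$ is immediate. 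For $H(\tilde\bbf_u \mid T = C)$, note that $\tilde\bbf_u$ must differ from every color appearing on $N_u$ --- in particular from $a_0 \in A$ and from the $t$ colors of $T \subseteq B$, which together constitute $t+1$ distinct forbidden colors --- leaving at most $q - 1 - t$ choices, hence $H(\tilde\bbf_u \mid T = C) \le \log(q - 1 - t)$.

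The last step is an elementary integer optimization: on $\{1, \dots, q/2\}$ the function $t \mapsto t(q - 1 - t)$ is maximized at $t = q/2$ with value $(q/2)(q/2 - 1) = \qq \,(1 - 2/q)$. Averaging the preceding per-$C$ estimate over $T$ yields
\[
\tfrac{1}{d}H(\tilde\bbf_{N_u \setminus S_1}) + H(\tilde\bbf_u \mid \tilde\bbf_{N_u \setminus S_1}) \le o(1) + \log\bigl(\qq\,(1 - 2/q)\bigr) = \log\qq - \gO(1),
\]
since $q \ge 4$ is a fixed constant. The argument is short; the one genuine ``idea'' is that the disjointness of $A$ and $B$ --- together with $v_0$ being \emph{bad} while all of $N_u \setminus S_1$ is \emph{good} --- forces $\{a_0\}$ and $T$ to be disjoint, and this is precisely what keeps $t(q - 1 - t)$ strictly below $\qq$ and supplies the $\gO(1)$ savings. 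I do not anticipate any serious obstacle beyond carefully tracking which variables are ``exposed'' versus conditioned on in the entropy decomposition.
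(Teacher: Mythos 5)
Your proof is correct. It opens with the same decomposition as the paper's (the bound \eqref{end.re}: expose the color set $\tilde\bbf(N_u\setminus S_1)$ first, absorbing $\frac1d H(\tilde\bbf(N_u\setminus S_1))$ into $O(1/d)$), but extracts the $\gO(1)$ saving differently. The paper runs a three-case analysis on the value $C$ of the color set (non-principal $C$; principal $C$ missing a color used on $N_u\cap S_1$; principal $C$ containing all colors on $N_u\cap S_1$), and in the last case the saving comes from the \emph{neighbors}: good vertices of $N_u\setminus S_1$ have only $|C\cap B|\le c-1$ choices, which yields a gain of $-(d'/d)\log(1-1/c)$ and therefore uses \eqref{approx3} quantitatively, via $d'=|N_u\setminus S_1|\ge d-\psi=(1-o(1))d$. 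You instead note that $C\subseteq B$ always (since $X_1\subseteq S_1$ makes all of $N_u\setminus S_1$ good), so $t=|C|\le q/2$, and you take the saving at $u$ \emph{itself}: the bad neighbor $v_0\in X_1\cap N_u\subseteq S_1$ supplied by $u\in G_1=N(X_1)$ forbids an additional color in $A$, disjoint from $C$, so $H(\tilde\bbf_u\mid \tilde\bbf(N_u\setminus S_1)=C)\le\log(q-1-t)$, and $t(q-1-t)\le(q/2)(q/2-1)=\qq(1-2/q)$ uniformly in $t$. This collapses the paper's case analysis into a single estimate and only needs $N_u\setminus S_1\neq\emptyset$ rather than $|N_u\setminus S_1|\ge d-\psi$. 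Both arguments rest on the same two structural facts ($N_u\setminus S_1$ is good, and $u$ has a bad neighbor inside $S_1$), so this is the same approach with a cleaner and slightly more robust endgame rather than a fundamentally different proof.
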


\begin{proof}
Note that the lhs of \eqref{end.st} is at most
\beq{end.re}\frac{1}{d}H(\tilde \bbf(N_u \setminus S_1))+\frac{1}{d}H(\tilde \bbf_{N_u \setminus S_1}|\tilde \bbf (N_u \setminus S_1))+H(\tilde \bbf_u|\tilde \bbf(N_u \setminus S_1)),\enq
and if $\tilde \bbf(N_u \setminus S_1)=C$ for a non-principal color set $C$, then (with $c:=|C|$)
\beq{nonprinc'} \frac{1}{d}H(\tilde \bbf_{N_u \setminus S_1}|\tilde \bbf(N_u \setminus S_1)=C)+H(\tilde \bbf_u|\tilde \bbf(N_u \setminus S_1)=C) \le \log c +\log (q-c) =\log \qq-\gO(1).\enq
Now, suppose $\tilde \bbf(N_u \setminus S_1)=C$ for a principal color set $C$. Note that we have already specified colors on $S_1$, and if some vertices in $S_1 \cap N_u$ use colors not in $C$, then we again  have
\beq{princ''} \frac{1}{d}H(\tilde \bbf_{N_u \setminus S_1}|\tilde \bbf(N_u \setminus S_1)=C)+H(\tilde \bbf_u|\tilde \bbf(N_u \setminus S_1)=C) \le \log (q/2)+\log (q/2-1)= \log \qq-\gO(1).\enq

So now assume that $\tilde \bbf(N_u \setminus S_1)=C$ and that $C$ includes all the colors on $N_u \cap S_1$.  Note that $|N_u \setminus S_1| \ge d-\psi$ by \eqref{approx3}, so in particular, $u \in G_1 \setminus [X_1]^-$. (Recall the definition of $[X_1]^-$ from \eqref{xminus}.)
This implies $u \sim X_1$ and $u \sim \cD \setminus X_1$, so $C$ must intersect both of $A$ and $B$. Let $C' = C \cap B ~ (\ne \emptyset)$ and $c'=|C'| (\le c-1)$. Note that the vertices in $N_u \setminus S_1$ are all good, thus each vertex in $N_u \setminus S_1$ has at most $c'$ choices for colors. Therefore, with $d':=|N_u \setminus S_1| (\ge d-\psi)$, we have
\beq{princ'}\begin{split} \frac{1}{d}H(\tilde \bbf_{N_u \setminus S_1}|\tilde \bbf(N_u \setminus S_1)=C)+H(\tilde \bbf_u|\tilde \bbf(N_u\setminus S_1)=C)& \le \frac{d'}{d}\log(c')+\log (q-c)\\
&\le \log c+(d'/d)\log(1-1/c)+\log(q-c)\\
&=\log\qq-\gO(1).\end{split}\enq
Combining \eqref{nonprinc'}, \eqref{princ''}, and \eqref{princ'}, we have that the last two terms of \eqref{end.re} are at most
\[ \begin{split} 
\sum_C\pr(\tilde \bbf(N_u\setminus S_1)=C)\left[\frac{1}{d}H(\tilde \bbf_{N_u\setminus S_1}|\tilde \bbf(N_u \setminus S_1)=C)+H(\tilde \bbf_u|\tilde \bbf(N_u \setminus S_1)=C)  \right]\le \log \qq-\gO(1). \end{split} \]
Therefore, \eqref{end.re} is at most $\log \qq-\gO(1)$ (by noting that its first term is $O(1/d)$).
\end{proof}
~

Now we consider
\beq{tired}\begin{split} 
H(\tilde \bbf_{(\cD \cup \cU) \setminus (F_1 \cup S_1)}) = H(\tilde \bbf_{\cD \setminus S_1})+H(\tilde \bbf_{\cU \setminus F_1}|\tilde \bbf_{\cD \setminus S_1}).
\end{split}\enq
We bound the first term on the rhs by applying Lemma \ref{lem:Sh} with $\alpha_S=1/d$ if $S=N_u \setminus S_1$ for an $u \in (G_1 \cup S_2) \setminus F_1$, and some $\alpha_v$ for $v \in \cD \setminus S_1$ so that \eqref{fractiling} is satisfied. Noting that $H(\tilde \bbf_v) \le \log(q/2)$ for each vertex $ v \in \cD \setminus S_1$ (since they are good),
\beq{slack.bd2}\begin{split}H(\tilde \bbf_{\cD \setminus S_1}) &\le \frac{1}{d}\sum_{u \in (G_1 \cup S_2)\setminus F_1} H(\tilde \bbf_{N_u \setminus S_1})+\sum_{v \in \cD \setminus S_1} \left(1-\frac{d_{(G_1 \cup S_2)\setminus F_1}(v)}{d}\right) \log\left(\frac q2\right). \end{split}\enq
To bound the second sum of \eqref{slack.bd2}, observe that
\[\begin{split} \sum_{v \in \cD \setminus S_1} d_{(G_1 \cup S_2) \setminus F_1}(v)&=|\nabla(\cD \setminus S_1, (G_1 \cup S_2) \setminus F_1)|\\
& \ge |\nabla(\cD,G_1 \cup S_2)|-|\nabla(\cD, F_1)|-|\nabla(S_1,\cU \setminus F_1)|\\
& \stackrel{\eqref{approx2}}{\ge} d|G_1 \cup S_2|-df_1-\psi s_1 \stackrel{\eqref{sf.ub}}{=} d|G_1 \cup S_2|-df_1-O(\psi g),
\end{split}\]
so \eqref{slack.bd2} is at most (recalling that $|\cD|=N/2$)
\beq{slack.bd2'}\frac{1}{d}\sum_{u \in (G_1 \cup S_2)\setminus F_1} H(\tilde \bbf_{N_u \setminus S_1})+\log\left(\frac q2\right)\left(\frac{N}{2}-|G_1 \cup S_2| + f_1 - s_1\right) + O\left(\frac{\psi g}{d}\right).\enq

\nin For the second term in the rhs of \eqref{tired}, again noting that $H(\tilde \bbf_u) \le \log (q/2)$ for each vertex $u \in \cU \setminus S_2$ (since they are good),
\beq{slack.bd3}\begin{split}H(\tilde \bbf_{\cU \setminus F_1}|\tilde \bbf_{\cD \setminus S_1}) &\le \sum_{u \in (G_1 \cup S_2)\setminus F_1} H(\tilde \bbf_u|\tilde \bbf_{N_u \setminus S_1}) +|\cU \setminus (G_1 \cup S_2)|\log\left(\frac q2\right)\\
&= \sum_{u \in (G_1 \cup S_2)\setminus F_1} H(\tilde \bbf_u|\tilde \bbf_{N_u \setminus S_1}) +\log\left(\frac q2\right)\left(\frac{N}{2}-|G_1 \cup S_2|\right).\end{split}\enq
~
The sum of the first terms of \eqref{slack.bd2'} and \eqref{slack.bd3} are at most, by Proposition \ref{entropy.bm} and Lemma \ref{end},
\beq{too.tired} \begin{split}&\sum_{u \in (G_1 \cup S_2) \setminus F_1} \left[\frac{1}{d} H(\tilde \bbf_{N_u \setminus S_1})+H(\tilde \bbf_u|\tilde \bbf_{N_u \setminus S_1})\right]\\
& \le (g_1-f_1)\left(\log\left(\frac q2\right)^2-\gO(1)\right)+|S_2 \setminus G_1|\left(\log\left(\frac q2\right)^2+O\left(\frac 1d\right)\right)\\
& \le \log\left(\frac q2\right)^2(|G_1 \cup S_2|  - f_1)-\bO(g_1 - f_1) + O\left(\frac gd\right)\\
& \le \log\left(\frac q2\right)^2(|G_1 \cup S_2|  - f_1)-\gO(\eps g),
\end{split}\enq
where for the last inequality we use $\eps:=1/\log^{1.5} d$ and the fact that $g_1-f_1 \ge \eps g_1$ in Case 2.
Combining \eqref{slack.bd2'}, \eqref{slack.bd3}, and \eqref{too.tired} (and observing that $\psi g/d \ll \eps g$), we can bound \eqref{tired} by
\beq{summary} \begin{split} 
\log\left(\frac q2\right)^2\left(\frac{N}{2}-f_1\right) +\log\left(\frac q2\right)(f_1-s_1) -\gO(\eps g)=\log\left(\frac q2\right)^2\left(\frac{N}{2}-f_1\right) -\gO(\eps g)
\end{split}\enq
where the equality uses another assumption in Case 2, i.e., $f_1-s_1 < \eps' g_1$, and $\eps' \ll \eps$.

Finally, combining \eqref{tired'} and \eqref{summary}, we have
\beq{step2.conclusion2}\begin{split}H(\bbf)&=H(\bbf_{F_1 \cup S_1})+H(\bbf_{(\cD \cup \cU) \setminus (F_1 \cup S_1)}|\bbf_{F_1 \cup S_1})\\
& \le H(\bbf_{F_1\cup S_1})+\sum_{\rho}\pr(\bbf_{F_1\cup S_1}=\rho)H(\bbf_{(\cD \cup \cU) \setminus (F_1 \cup S_1)}|\bbf_{F_1 \cup S_1}=\rho)\\
& \le f_1\log(q/2)^2 + O\left(g\psi/d\right)+\log(q/2)^2\left(N/2-f_1\right)-\gO(\eps g) \\
&=N\log (q/2)-\gO(g/\log^{1.5} d),\end{split}\enq
Therefore, in Case 2, the number of colorings in $\cF_{F, S}(g_1)$ is at most \[
2^{N\log (q/2)-\gO(g/\log^{1.5} d)}.\]

\noindent\textbf{Case 3. $f_1-s_1 \ge \eps' g_1$.}

Let $\bbf$ be uniformly chosen from $\cF_{F, S}(g_1)$.
Recall that $(F_1\cup S_2)^+=(F_1\cup S_2) \cup N(F_1 \cup S_2)$.
 Then with $T(u)$ as in \eqref{eq.Tu.def}, we have
\begin{eqnarray}
H(\bbf_{(F_1\cup S_2)^+})&=&H(\bbf_{N(F_1 \cup S_2)})+H(\bbf_{F_1 \cup S_2}\mid\bbf_{N(F_1 \cup S_2)})\nonumber\\
&\stackrel{(\dagger)}{\le}& \sum_{u \in F_1 \cup S_2} \frac{1}{d}H(\bbf_{N_u})+\sum_{v \in N(F_1 \cup S_2)}\left(1-\frac{d_{F_1 \cup S_2}(v)}{d}\right)H(\bbf_v) + \sum_{u \in F_1 \cup S_2}H(\bbf_{u}\mid \bbf_{N_u})\nonumber\\
&=&\sum_{u \in F_1 \cup S_2} T(u)+\sum_{v \in N(F_1 \cup S_2)}\left(1-\frac{d_{F_1 \cup S_2}(v)}{d}\right)H(\bbf_v)\label{step3termM}
\end{eqnarray}
where for $(\dagger)$ we apply Lemma \ref{lem:Sh} with $\alpha_S=1/d$ if $S=N_u$ for some $u \in F_1\cup S_2$ and $\alpha_v=1- d_{F_1\cup S_2}(v)/d$ for each singleton $v \in N(F_1 \cup S_2)$ (so that \eqref{fractiling} is satisfied). 

Let $F_0=\{u \in F_1: N(u) \sub S_1\}$. The proof of below proposition is identical to that of Proposition \ref{prop5.3}, thus we omit.
\begin{prop}\label{Prop5.4} For any $u \in F_1 \setminus F_0$ with $d'(u):=d-d_{S_1}(u)$,
\[\frac1d H(\bbf_{N_u}|\bbf(N_u))+H(\bbf_u|\bbf({N_u}))\le\log(q/2)^2-\gO(d'(u)/d).\]
\end{prop}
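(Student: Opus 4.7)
The plan is to mirror the proof of Proposition \ref{prop5.3} with the closure $[X_1]$ replaced by $S_1$ throughout. I would first decompose the conditional entropy over the possible values $C$ of $\bbf(N_u)$:
\[
\frac{1}{d}H(\bbf_{N_u}|\bbf(N_u)) + H(\bbf_u|\bbf(N_u)) = \sum_C \pr(\bbf(N_u) = C)\left[\frac{1}{d}H(\bbf_{N_u}|\bbf(N_u) = C) + H(\bbf_u|\bbf(N_u) = C)\right],
\]
and bound each summand separately. The non-principal case is handled exactly as in Proposition \ref{prop5.3}: if $c := |C| \ne q/2$, the naive estimates $\frac{1}{d}H(\bbf_{N_u}|\bbf(N_u) = C) \le \log c$ and $H(\bbf_u|\bbf(N_u) = C) \le \log(q - c)$, combined with $\log c + \log(q - c) \le \log(q/2)^2 - \gO(1)$, give even more than is needed.

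For the principal case $c = q/2$, the key structural fact to verify is that $C$ must intersect both $A$ and $B$, which is where the hypothesis $u \in F_1 \setminus F_0$ enters. Since $u \in F_1 \sub N(X_1)$ by \eqref{approx1}, there is some $x \in N(u) \cap X_1$ with $f_x \in A$; and since $u \not\in F_0$, the set $N(u) \setminus S_1$ is nonempty, and any $v$ there lies in $\cD \setminus X_1$ and is therefore good, so $f_v \in B$. Hence $C' := C \cap B$ is a proper nonempty subset of $C$ with $c' := |C'| \le c - 1$.

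With this in hand, the $d'(u) = d - d_{S_1}(u)$ good vertices in $N_u \setminus S_1$ each contribute at most $\log c'$ (their colors are confined to $C'$), while the $d_{S_1}(u)$ vertices in $N_u \cap S_1$ get the trivial bound $\log c$. Combined with $H(\bbf_u | \bbf(N_u) = C) \le \log(q - c)$, this yields
\[
\frac{1}{d} H(\bbf_{N_u}|\bbf(N_u)=C) + H(\bbf_u|\bbf(N_u)=C) \le \log c + \tfrac{d'(u)}{d}\log(1 - 1/c) + \log(q - c) = \log(q/2)^2 - \gO(d'(u)/d).
\]
Averaging over $C$ and folding in the $O(1/d)$ contribution from $\tfrac{1}{d}H(\bbf(N_u))$ finishes the bound.

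There is no serious obstacle: the argument is a verbatim adaptation of the one for Proposition \ref{prop5.3}, the only substantive check being that the approximating-pair data supply the same "two-sided adjacency" (namely that $u$ has neighbors in both $X_1$ and $\cD \setminus X_1$) that the closure $[X_1]$ provided in the earlier proof. This is immediate from the definitions $F_1 \sub N(X_1)$ and $F_0 = \{u \in F_1 : N(u) \sub S_1\}$, so the proof carries over directly.
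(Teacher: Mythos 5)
Your proposal is correct and is exactly the argument the paper intends: the paper omits the proof of this proposition, stating it is identical to that of Proposition~\ref{prop5.3}, and you have carried out that adaptation, correctly verifying the one substantive point that $u\in F_1\setminus F_0$ (via $F_1\subseteq N(X_1)$ and $N(u)\not\subseteq S_1\supseteq X_1$) supplies neighbors of $u$ in both $X_1$ and $\cD\setminus X_1$, so a principal $C$ must meet both $A$ and $B$. The only cosmetic remark is that the stated bound does not include the $\tfrac1d H(\bbf(N_u))$ term, so no ``folding in'' of that $O(1/d)$ is needed here; it is accounted for separately in the main text via \eqref{def_tu}.
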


Note that
\beq{prop5.4.note}\sum_{u \in F_1 \setminus F_0} d'(u) =|\nabla(F_1,\cD \setminus S_1)|\ge df_1-ds_1\ge d\eps' g_1.\enq
Using Proposition \ref{Prop5.4} and \eqref{prop5.4.note}, we obtain
\[
\begin{split}
\sum_{u \in F_1 \setminus F_0}T(u)&\stackrel{\eqref{def_tu}}{\le}\sum_{u \in F_1 \setminus F_0}\left[\frac{1}{d}H(\bbf(N_u))+\frac{1}{d}H(\bbf_{N_u}|\bbf(N_u))+H(\bbf_u|\bbf(N_u))\right]\\
&\leq O(g/d) + (f_1 - f_0)\log(q/2)^2-\gO(\eps' g).
\end{split}
\]
On the other hand, using the naive bound in Proposition \ref{entropy.bm}, we have
\[
\sum_{u \in F_0 \cup (S_2 \setminus F_1)} T(u)\le O(g/d) +(f_0+|S_2 \setminus F_1|)\log(q/2)^2.
\]
Combining the above two bounds, the first term of \eqref{step3termM} is 
\beq{step3.term1}
\sum_{u \in F_1\cup S_2} T(u) \leq |F_1\cup S_2|\log(q/2)^2-\gO(\eps' g).
\enq

For the second term of \eqref{step3termM}, using the fact that $H(\bbf_v) \le \log(q/2)$ for all $v \in \cD \setminus S_1$ (since they are all good), we have
\beq{step3.term2}
\begin{split}
\sum_{v \in N(F_1 \cup S_2)}\left(1-\frac{d_{F_1 \cup S_2}(v)}{d}\right)H(\bbf_v)
&\le \sum_{v\in S_1}\left(1-\frac{d_{F_1 \cup S_2}(v)}{d}\right)H(\bbf_v) + \sum_{v \in N(F_1 \cup S_2) \setminus S_1}\left(1-\frac{d_{F_1 \cup S_2}(v)}{d}\right)\log\left(\frac{q}{2}\right)\\
&\stackrel{\eqref{approx2}}{\le} O(\psi g/d)+ \sum_{v \in N(F_1 \cup S_2) \setminus S_1}\left(1-\frac{d_{F_1 \cup S_2}(v)}{d}\right)\log\left(\frac{q}{2}\right).
\end{split}
\enq
Observe that
\[
\sum_{v \in N(F_1 \cup S_2) \setminus S_1}d_{F_1 \cup S_2}(v)
=|\nabla(F_1 \cup S_2,N(F_1 \cup S_2) \setminus S_1)|
\ge d|F_1 \cup S_2|-ds_1.
\]
Then \eqref{step3.term2} is at most
\beq{step3.term20}
(|N(F_1 \cup S_2)|-|F_1 \cup S_2|)\log(q/2)+O(\psi g/d).
\enq
Combining \eqref{step3termM}, \eqref{step3.term1}, and \eqref{step3.term20} (and observing that $\psi g/d \ll \eps' g$), we have
\[H(\bbf_{(F_1\cup S_2)^+})\le |(F_1\cup S_2)^+|\log(q/2)-\gO(\eps' g).\]

Finally, using the fact that $H(\bbf_v)\le \log(q/2)$ for all $v \notin S_1 \cup S_2$ (since such $v$'s are good),
\[
H(\bbf)\le H(\bbf_{(F_1\cup S_2)^+})+\sum_{v \notin (F_1\cup S_2)^+} H(\bbf_v)
\le N\log (q/2)-\gO(g/\log^2 d). \]
Therefore, in Case 3, the number of colorings in $\cF_{F, S}(g_1)$ is at most \[
2^{N\log (q/2)-\gO(g/\log^{2} d)}.\]

\nin \textit{Conclusion.} 
To sum up, we have shown that for any $(F, S)$ and $g_1$, the number of colorings in $\cF_{F, S}(g_1)$ is at most 
\[2^{N\log (q/2)-\gO(g/\log^{2} d)}=(q/2)^N 2^{-\gO(g/\log^2d)}.\]
Note by Lemma \ref{lem:container} and \eqref{psi.bd} that
\[
|\cU(g)|\leq 2^{O(g\log^2 d/d+g\log d/\psi +d)} = 2^{o(g/\log^2 d)}.
\]
Therefore, for any principal partition $(A, B)$ and $g \in [d^{10}, 2^{-\beta' d}N]$, we have
\[
\begin{split}
\sum_{X\in\cH(g)}\chi_{A, B}(X)
&\stackrel{\eqref{qtyFinalBdown}}{=}\sum_{(F, S)\in \cU(g)}\sum_{g_1\in[g/2, g]}2\cdot |\cF_{F, S}(g_1)|
\le 2\cdot 2^{o(g/\log^2 d)} \cdot g\cdot(q/2)^N2^{-\gO(g/\log^2d)}\\
&\le (q/2)^N 2^{-\gO(g/\log^2 d)},
\end{split}
\]
which completes the proof of Lemma~\ref{lem.step2}.

\section{Polymer models on $\cB_d$}\label{sec.polymer models}

We first define a polymer model on $\cB_d$ and introduce relevant definitions. Recall the definitions in \Cref{subsec.polymer intro}. The set of polymers is defined to be
\begin{equation}\label{def:polymer}
\cP:=\left\{\gamma\subseteq V: \gamma \text{ is non-empty, 2-linked, and } |N(\gamma)|\leq 2^{-\beta d}N\right\}
\end{equation}
where $\beta$ is as in \Cref{lem.step1}. Define $H_{\cP}$ to be the graph on $\cP$ where two polymers $\gamma$, $\gamma'$ are  adjacent iff $\gamma\cup \gamma'$ is a $2$-linked set (so every vertex has a loop). Recall that $\Omega_{\cP} (\ni \emptyset)$ is the collection of independent sets of $H_{\cP}$ with loops allowed.

Fix a principal partition $(A, B)$ of $\cQ$. 
In this section we tentatively drop the assumption that $q$ is even, as all proofs here works simultaneously for both even and odd $q$.
We equip each polymer in $\cP$ with the weight function (recall $\chi_{A,B}$ from \eqref{def.chi})
\begin{equation}\label{def:wefun}
\we_{A,B}(\gamma):=\frac{|\chi_{A, B}(\gamma)|}{(|A|\cdot|B|)^{N}} = \frac{|\chi_{A, B}(\gamma)|}{(\lfloor q/2 \rfloor\cdot\lceil q/2 \rceil)^{N}}.
\end{equation}
For future reference, observe that, writing $f_{\gamma^+}$ for $f$ restricted on $\gamma^+$ and $\hat{\chi}_{A, B}(\gamma):=\{ f_{\gamma^+} : f\in \chi_{A, B}(\gamma)\}$,
we can rewrite \eqref{def:wefun} as
\beq{eq.wefun.reform}
\omega_{A,B}(\gamma)=\frac{|\hat{\chi}_{A, B}(\gamma)|}{|A|^{|\gamma^+\cap \mathcal{L}_d|}\cdot|B|^{|\gamma^+\cap\mathcal{L}_{d-1}|}}.
\enq

Now define the polymer model associate to a principal partition $(A, B)$ as
\[
\Xi_{A,B}:=\Xi(\cP, \we_{A, B})=\sum_{\Lambda\in\Omega_{\cP}}\prod_{\gamma\in\Lambda}\we_{A, B}(\gamma).
\]
For a cluster $\Gamma=(\gamma_1, \gamma_2, \ldots, \gamma_k)$ where $\gamma_i\in \cP$, define the \textit{size} of $\Gamma$ to be $\lVert \Gamma\rVert=\sum_{i=1}^k|\gamma_i|$.
For any $k\geq 1$, the \textit{$k$-th term of the cluster expansion} of the polymer model $(\cP, \we_{A, B})$ is
\begin{equation}\label{def:LAB}
L_{A, B}(k):=\sum_{\Gamma\in \mathcal{C},\  \lVert \Gamma\rVert=k}\we_{A,B}(\Gamma)
\end{equation}
(where $\we_{A,B}(\Gamma)$ is defined as in \eqref{def:cluweight})
and then by \eqref{eq:cluexp},
\begin{equation}\label{def:Lsum}
 \ln\Xi_{A,B} = \sum_{\Gamma\in\mathcal{C}}\we_{A,B}(\Gamma) = \sum_{k=1}^{\infty}L_{A, B}(k). 
\end{equation}
\subsection{Verifying Koteck\'{y}--Preiss Condition}

For every polymer $\gamma\in\cP$, consider a new weight function
\[
\tilde{\we}_{A,B}(\gamma):=\we_{A,B}(\gamma)\exp(|\gamma|/d)~(>\we_{A,B}(\gamma))
\]
and the corresponding polymer model partition function
\begin{equation}\label{def:genepmodel}
\tilde{\Xi}_{A,B}:=\Xi(\cP, \tilde{\we}_{A, B})=\sum_{\Lambda\in\Omega_{\cP}}\prod_{\gamma\in\Lambda}\tilde{\we}_{A, B}(\gamma).
\end{equation}
In the next lemma we verify Koteck\'{y}--Preiss Condition for $(\cP, \tilde \we_{A, B})$ (thus for $(\cP, {\we}_{A,B})$). This stronger result will be used in the proof of Lemma~\ref{lem:defectsize}.
\begin{lem}\label{lem:KP}
There exists functions $f: \cP \rightarrow [0, \infty)$ and $g: \cP \rightarrow [0, \infty)$ such that for each principal partition $(A, B)$ and all polymers $\gamma\in\cP$,
\begin{equation}\label{eq:kot-thm}
\sum_{\gamma'\sim \gamma}\we_{A, B}(\gamma')\exp(|\gamma'|/d)\exp\left(f(\gamma')+g(\gamma')\right)\leq f(\gamma).
\end{equation}
In particular, the cluster expansion of both $\ln\Xi_{A, B}$ and $\ln\tilde{\Xi}_{A, B}$ converge absolutely.
\end{lem}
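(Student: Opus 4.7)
The plan is to verify the Koteck\'y--Preiss condition \eqref{eq:kot-thm} with the linear ansatz $f(\gamma) = g(\gamma) := \mu|\gamma|$ for $\mu := 1/\log^3 d$. With this choice the condition collapses to
\[
\sum_{\gamma' \sim \gamma} \we_{A,B}(\gamma')\, \exp\!\left((1/d + 2\mu)|\gamma'|\right) \;\le\; \mu|\gamma|,
\]
to be verified uniformly in the principal partition $(A, B)$ and in $\gamma \in \cP$. I will split the sum over $\gamma' \sim \gamma$ according to whether $|N(\gamma')|$ is below or above the threshold $d^{10}$ of \Cref{lem.step2}, using a direct per-polymer weight bound for the small range and the summed bound of \Cref{lem.step2} for the large range.

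The per-polymer bound comes from unpacking \eqref{eq.wefun.reform}: since $A \cap B = \emptyset$, the only proper-coloring constraints on any $\hat f \in \hat\chi_{A,B}(\gamma')$ arise from good--bad edges, where each good vertex $u \in N(\gamma') \setminus \gamma'$ must avoid the colors of its bad neighbors and so has at most $(q/2 - d_{\gamma'}(u))$ admissible colors. Comparing with the trivial count for the bad vertices and the normalization in \eqref{eq.wefun.reform} yields
\[
\we_{A,B}(\gamma') \;\le\; \prod_{u \in N(\gamma') \setminus \gamma'}\!\bigl(1 - 2d_{\gamma'}(u)/q\bigr) \;\le\; (1-2/q)^{|N(\gamma')\setminus\gamma'|}.
\]

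For \emph{small} polymers with $|N(\gamma')| < d^{10}$, \Cref{lem:isoper}(ii) gives $|N(\gamma')| \ge d|\gamma'|/12$, so the per-polymer bound yields $\we_{A,B}(\gamma') \le 2^{-\Omega(d|\gamma'|)}$. The adjacency $\gamma' \sim \gamma$ forces $\gamma' \cap N^2(\gamma) \ne \emptyset$ with $|N^2(\gamma)| = O(d^2|\gamma|)$, and by \Cref{lem:numlinkset} applied in $(\cB_d)^2$ the number of $2$-linked sets of size $k$ through a fixed vertex is at most $(ed^2)^{k-1}$. The small-polymer contribution is therefore bounded by a geometric series $\sum_k O(d^2|\gamma|)(ed^2)^{k-1} 2^{-\Omega(dk)} e^{O(k)}$ with ratio $\ll 1$, giving at most $O(d^2|\gamma|) \cdot 2^{-\Omega(d)} = o(\mu|\gamma|)$. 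For \emph{large} polymers with $|N(\gamma')| \ge d^{10}$, \Cref{lem.step2} provides $\sum_{|N(\gamma')|=g} \we_{A,B}(\gamma') \le 2^{-\Omega(g/\log^2 d)}$; isoperimetry supplies $|\gamma'| \le O(|N(\gamma')|)$ for all polymers in $\cP$, and the crucial inequality $1/d + 2\mu \ll 1/\log^2 d$ (valid because $d \gg \log^2 d$) ensures the decay of \Cref{lem.step2} absorbs the exponential factor. Summing the resulting geometric tail from $g \ge d^{10}$ contributes at most $2^{-\Omega(d^{10}/\log^2 d)}$, which is negligible against $\mu|\gamma|$.

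The main technical subtlety is ensuring that the extra factor $e^{|\gamma'|/d}$ carried by $\tilde\we_{A,B}$ is absorbed by the decay inherited from \Cref{lem.step2}. This is exactly why the threshold $d^{10}$ plays its role: below it, the per-polymer bound $2^{-\Omega(d|\gamma'|)}$ is more than strong enough to swallow both $e^{|\gamma'|/d}$ and the combinatorial count $(ed^2)^{|\gamma'|-1}$; above it, the polylogarithmic decay rate $1/\log^2 d$ of \Cref{lem.step2} comfortably beats the growth rates $1/d$ and $2\mu = O(1/\log^3 d)$. Combining the two ranges verifies the Koteck\'y--Preiss condition, and absolute convergence of both $\ln\Xi_{A,B}$ and $\ln\tilde\Xi_{A,B}$ follows from \Cref{KPconv}.
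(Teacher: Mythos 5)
Your proof is correct and follows essentially the same route as the paper: split the sum over $\gamma'\sim\gamma$ at the threshold $|N(\gamma')|=d^{10}$, handle small polymers via a per-polymer weight bound combined with \Cref{lem:isoper}(ii) and the tree-count of \Cref{lem:numlinkset} in $(\cB_d)^2$, and handle large polymers via \Cref{lem.step2}. Two remarks. First, your intermediate claim that a good vertex $u\in N(\gamma')\setminus\gamma'$ has at most $q/2-d_{\gamma'}(u)$ admissible colors is not justified (its bad neighbours may repeat colors, so one can only guarantee at most $q/2-1$ admissible colors), but the bound you actually use downstream, $\we_{A,B}(\gamma')\le(1-2/q)^{|\partial\gamma'|}$, is exactly the paper's \eqref{ineq:polyweight} and is correct, so this is self-correcting. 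Second, your witness $g(\gamma)=|\gamma|/\log^3 d$ proves the existence statement as written, but it is substantially weaker than the paper's choice $g(\gamma)\asymp|N(\gamma)|$ (which is $\Omega(d|\gamma|)$ for small polymers); the paper's proof of \eqref{claim2 of lem 5.2} in \Cref{lem:erroresti} invokes \eqref{eq:kot-thm2} with that specific $g$ to get a \emph{constant} truncation point $K(t)$, and with your $g$ the same argument would only yield $K=O_t(d\log^3 d)$. So while your proof of \Cref{lem:KP} stands, the stronger choice of $g$ is not an arbitrary convenience but is needed later in the paper.
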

\begin{proof}
Fix a principal partition $(A, B)$, and simply denote $\we_{A, B}$ as $\we$. Recall from \Cref{lem.step2} that there exists a constant $\xi>0$ such that for all $g \in [d^{10}, 2^{-\beta d}N]$,
\begin{equation}\label{ineq:polyertotalweight}
\sum_{\gamma \in \cH(g)} \we(\gamma) \leq \exp(-\xi g/\log^2 d),
\end{equation}
where $\cH(g)=\{X \sub V: X \mbox{ is 2-linked}, |N(X)|=g\}$.
Let
\begin{equation}\label{def:fgfun}
f(\gamma)=|\gamma|/d \quad \text{and} \quad g(\gamma)=\begin{cases}
|N(\gamma)|/3q  & \text{if } |N(\gamma)|\leq d^{10} \\ 
\xi |N(\gamma)|/(4\log ^2d) & \text{otherwise}.
\end{cases}
\end{equation}
Note that, to show \eqref{eq:kot-thm}, it suffices to show that for each $v\in V$,
\begin{equation}\label{ineq:convercond}
\sum_{\gamma':\ \gamma'\ni v}\we(\gamma')\cdot \exp(2f(\gamma')+g(\gamma'))\leq 1/d^3, 
\end{equation}
since the lhs of \eqref{eq:kot-thm} is at most
\[
\sum_{u\in \gamma}\sum_{v\in N^2(u)}\sum_{\gamma':\ \gamma'\ni v}\we(\gamma')\cdot \exp(2f(\gamma')+g(\gamma'))\stackrel{\eqref{ineq:convercond}}{\le} |\gamma|d^2\cdot (1/d^3)=f(\gamma).
\]

\nin We split the sum in \eqref{ineq:convercond} into two parts according to $|N(\gamma')|$. 
First, for $\gamma$ with $|N(\gamma)| >d^{10}$, 
\beq{eq.case1}
\begin{split}
\sum_{\gamma':\ \gamma'\ni v,\ |N(\gamma')|\geq d^{10}}\we(\gamma')\cdot \exp(2f(\gamma')+g(\gamma'))
&\leq \sum_{g=d^{10}}^{2^{-\beta d}N}\sum_{\gamma':\ \gamma'\in \cH(g)}\we(\gamma')\cdot \exp(2f(\gamma')+g(\gamma'))\\
&\stackrel{\eqref{ineq:polyertotalweight}}{\leq} \sum_{g=d^{10}}^{2^{-\beta d}N}\exp(-\xi g/\log^2 d)\cdot\exp(\xi g/(2\log^2 d)) \leq \frac{1}{2d^3}
\end{split}
\enq
for $d$ sufficiently large. Next, for $\gamma$ with $|N(\gamma)| \le d^{10}$, we first give an upper bound on $\we(\gamma)$.  
Observe that a naive counting gives
\[
\begin{split}
|\hat{\chi}_{A, B}(\gamma)| 
&\leq |A^c|^{|\gamma\cap \cL_{d}|}(|B|-1)^{|\partial(\gamma)\cap \cL_{d-1}|}|B^c|^{|\gamma\cap \cL_{d-1}|}(|A|-1)^{|\partial(\gamma)\cap \cL_{d}|}\\
&= |B|^{|\gamma\cap \cL_{d}|}(|B|-1)^{|\partial(\gamma)\cap \cL_{d-1}|}|A|^{|\gamma\cap \cL_{d-1}|}(|A|-1)^{|\partial(\gamma)\cap \cL_{d}|},
\end{split}
\]
therefore,
\begin{equation}\label{ineq:polyweight}
\begin{split}
\we(\gamma) &\stackrel{\eqref{eq.wefun.reform}}{\leq} \left(\frac{|B|}{|A|}\right)^{|\gamma\cap \cL_{d}|}\left(\frac{|B|-1}{|B|}\right)^{|\partial(\gamma)\cap \cL_{d-1}|}\left(\frac{|A|}{|B|}\right)^{|\gamma\cap \cL_{d-1}|}\left(\frac{|A|-1}{|A|}\right)^{|\partial(\gamma)\cap \cL_{d}|}\\
&\leq \left(\max\left\{\frac{|B|}{|A|},\ \frac{|A|}{|B|}\right\}\right)^{|\gamma|}\left(\max\left\{1 - \frac{1}{|A|},\ 1 - \frac{1}{|B|}\right\}\right)^{|\partial(\gamma)|}\\
&\leq \left(1 + \frac{1}{\lfloor q/2 \rfloor}\right)^{|\gamma|}\left(1 - \frac{1}{\lceil q/2 \rceil}\right)^{|\partial(\gamma)|}.
\end{split}
\end{equation}
Now, if $|N(\gamma)|=g\leq d^{10}$, then by \Cref{lem:isoper} (ii), we have 
$|\gamma|\leq 12g/d$. So we have
\[
\begin{split}
\sum_{\substack{\gamma':\ \gamma'\ni v \\   
\   |N(\gamma')|\leq d^{10}}}\we(\gamma')\exp(2f(\gamma')+g(\gamma'))
&\leq \sum_{g=d}^{d^{10}}\sum_{\gamma':\ \gamma'\ni v,\ |N(\gamma')|=g}\we(\gamma')\exp(2f(\gamma')+g(\gamma'))\\
&\stackrel{\eqref{eq:numlinkset}, \eqref{ineq:polyweight}}{\leq} \sum_{g=d}^{d^{10}}(ed^2)^{12g/d}\left(1 + \frac{1}{\lfloor q/2 \rfloor}\right)^{12g/d}\left(1 - \frac{1}{\lceil q/2 \rceil}\right)^{g/2}\exp\left(\frac{24g}{d^2} + \frac{g}{3q}\right)\\
&\leq \sum_{g=d}^{d^{10}}(2e^2d^2)^{12g/d}\left(1 - \frac{1}{\lceil q/2 \rceil}\right)^{g/2}\exp\left(\frac{g}{3q}\right) \le \frac{1}{2d^3}.
\end{split}
\]
for $d$ large enough. This, together with \eqref{eq.case1}, completes the proof.
\end{proof}

\subsection{Bounding terms in cluster expansion} For a principal partition $(A, B)$, define
\[
\delta_{A, B}:=\max\left\{1 - \frac{1}{|A|}, 1 - \frac{1}{|B|}\right\}=1 - \frac{1}{\lceil q/2 \rceil}.
\]

\begin{lem}\label{lem:erroresti}
Let $(A, B)$ be a principal partition. Then for any fixed $t\geq 1$, we have 
\[
\sum_{k=t}^{\infty}|L_{A, B}(k)|=\bO_t\left(Nd^{2(t-1)}\delta_{A, B}^{dt}\right).
\]
\end{lem}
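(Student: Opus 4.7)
The sum $\sum_{k\ge t}|L_{A,B}(k)|$ is dominated by its leading $(k=t)$ term, which essentially comes from single polymers of size $t$; the higher-order terms will form a geometrically convergent tail, since $d^2\delta_{A,B}^d = o(1)$ as $d\to\infty$. My plan is threefold: (i) bound the weight of each cluster via the polymer weight estimate \eqref{ineq:polyweight} together with the isoperimetric inequality \Cref{lem:isoper}; (ii) count clusters of a given total size using \Cref{lem:numlinkset} applied to the square $\cB_d^2$; and (iii) split the summation into a ``small $k$'' range handled by a refined isoperimetric bound and a ``large $k$'' tail handled by the Koteck\'y--Preiss estimate \Cref{lem:KP}.

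\textbf{Step 1 (weight of a cluster).} For a cluster $\Gamma=(\gamma_1,\dots,\gamma_\ell)$ with $\|\Gamma\|=k$, each polymer satisfies $|\gamma_i|\le k$. Provided $k\le d/4$, \Cref{lem:isoper}(i) yields $|N(\gamma_i)|\ge d|\gamma_i|-|\gamma_i|^2/2$, and using $|\partial\gamma_i|\ge|N(\gamma_i)|-|\gamma_i|$ one has $|\partial\gamma_i|\ge d|\gamma_i|-O(k^2)$. Inserting into \eqref{ineq:polyweight},
\[
\prod_i \omega_{A,B}(\gamma_i)\;\le\;\bigl(1+\tfrac{1}{\lfloor q/2\rfloor}\bigr)^{k}\delta_{A,B}^{\,dk-O(k^2)}.
\]
Combined with the elementary Ursell bound $|\phi(H_{\cP}[\Gamma])|\le \ell^{\ell}\le k^{k}$, this gives $|\omega_{A,B}(\Gamma)|\le e^{O_q(k^2)}\delta_{A,B}^{dk}$.

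\textbf{Step 2 (per-$k$ bound).} The support $V(\Gamma):=\bigcup_i\gamma_i$ is 2-linked in $\cB_d$ (because $H_{\cP}[\Gamma]$ being connected forces $V(\Gamma)$ to be 2-linked) with $|V(\Gamma)|\le k$. Since $\cB_d^2$ has maximum degree at most $d^2$, \Cref{lem:numlinkset} bounds the number of 2-linked subsets of size $\le k$ through a fixed vertex by $k(ed^2)^{k-1}$, so there are at most $kN(ed^2)^{k-1}$ candidate supports. For each support, the number of ordered multisets of 2-linked subsets forming a cluster of total size $k$ is at most $2^{k^2}$. Combined with Step~1,
\[
|L_{A,B}(k)|\;\le\;e^{O_q(k^2)}\cdot Nd^{2(k-1)}\delta_{A,B}^{dk}.
\]

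\textbf{Step 3 (summation).} For $t\le k\le d/\log d$, the ratio $|L_{A,B}(k+1)|/|L_{A,B}(k)|$ is at most $e^{O_q(k)}d^2\delta_{A,B}^d = o(1)$ uniformly in $k$ (as $d\to\infty$), so the partial sum in this range is geometric and evaluates to $\bO_t(Nd^{2(t-1)}\delta_{A,B}^{dt})$. For $k>d/\log d$ (but with $|\gamma_i|\le d^{10}$), I switch to the cruder bound $|\partial\gamma_i|\ge d|\gamma_i|/24$ coming from \Cref{lem:isoper}(ii); this yields $|L_{A,B}(k)|\le Ne^{-\Omega(dk)}$, whose sum is at most $Ne^{-\Omega(d^2/\log d)}$, vastly smaller than the target for fixed $t$. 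Finally, for $k>d^{10}$ I invoke \Cref{lem:KP}: applied to $\gamma=\{v\}$ and summed over $v\in V$, \Cref{KPconv} together with the lower bound $g(\Gamma)=\Omega(\|\Gamma\|/\log^2 d)$ gives a tail bound of $e^{-\Omega(d^{10}/\log^2 d)}N$, which is entirely negligible.

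\textbf{Main obstacle.} The chief technical difficulty is keeping the $k$-dependent prefactors — the $\delta_{A,B}^{-O(k^2)}$ slack from the isoperimetric inequality and the $e^{O(k^2)}$ factor coming from enumeration of multisets and the Ursell function bound — small enough that the series in $k$ is genuinely geometric. This forces splitting the summation into three regimes as above, with the Koteck\'y--Preiss estimate of \Cref{lem:KP} taking care of the final large-$k$ tail.
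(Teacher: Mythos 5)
Your overall architecture (a per-$k$ bound of the form $Nd^{2(k-1)}\delta_{A,B}^{dk}$ plus a Koteck\'y--Preiss tail) matches the paper's, but your middle regime contains a genuine gap. The paper only needs the direct per-$k$ bound for $k$ up to a \emph{constant} $K(t)$ (chosen so that $\exp(-d(K+1)/(36q))\le \delta_{A,B}^{dt}\cdot d/N$), and then invokes \eqref{eq:kot-thm2} from \Cref{lem:KP} for \emph{all} $k>K$; the Koteck\'y--Preiss conclusion already controls the sum of $|\we(\Gamma)|e^{g(\Gamma)}$ over all clusters, with the Ursell weights built in, so no further cluster counting is needed. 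You instead push the direct count all the way to $k\le d^{10}$, and in the range $d/\log d< k\le d^{10}$ your stated ingredients do not give $|L_{A,B}(k)|\le Ne^{-\Omega(dk)}$: the factor $2^{k^2}$ from enumerating ordered multisets on a fixed support, and your Ursell bound $k^k$, are not dominated by $\delta_{A,B}^{dk/24}$ once $k\gg d$ (e.g.\ at $k=d^2$ you are comparing $2^{d^4}$ against $\delta_{A,B}^{\Theta(d^3)}$). Relatedly, the ``elementary Ursell bound $|\phi|\le \ell^\ell$'' is not elementary: the trivial bound is (number of connected spanning subgraphs)$/\ell!\le 2^{\binom{\ell}{2}}/\ell!$, which exceeds $\ell^\ell$ for moderate $\ell$; anything of the shape $e^{O(\ell\log\ell)}$ requires the Penrose tree-graph inequality, which you do not invoke. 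Your first regime ($t\le k\le d/\log d$) does survive even with the trivial $2^{O(k^2)}$ factors, and your final tail ($k>d^{10}$) is fine in spirit, but the middle regime as written fails.

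The repair is to do what the paper does: stop the direct count at the constant $K(t)$ and let \Cref{lem:KP} absorb everything beyond it. (Alternatively, your middle regime could be salvaged by replacing $2^{k^2}$ with the composition-based count $2^{k-1}\prod_i\binom{|V(\Gamma)|}{a_i}\le e^{O(k\log k)}$ and by citing the tree-graph inequality for $\phi$ --- but at that point you are re-deriving a weaker form of what the Koteck\'y--Preiss machinery already gives you for free.)
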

\begin{proof}
We show that
\beq{claim1 of lem 5.2}
|L_{A, B}(k)|=\bO_k\left(Nd^{2(k-1)}\delta_{A, B}^{dk}\right) \mbox{ for each $k\in\NN$;}
\enq
and
\beq{claim2 of lem 5.2}
\mbox{for each $t\geq 1$, there exists a constant $K=K(t)$ such that 
$\sum_{k>K}|L_{A, B}(k)|\le \delta_{A, B}^{dt}$,}
\enq
from which the conclusion follows since
\[
\sum_{k=t}^{\infty}|L_{A, B}(k)|=\sum_{k=t}^{K}|L_{A, B}(k)| +\sum_{k>K}|L_{A, B}(k)|=
\bO_t\left(Nd^{2(t-1)}\delta_{A, B}^{dt}\right).
\]

\nin For \eqref{claim1 of lem 5.2}, let $\gamma$ be a polymer of size at most a fixed $k\in\NN$. By \Cref{lem:isoper} (i), we have $|N(\gamma)|\geq d|\gamma| - \bO(1)$, so
\[
\begin{split}
\we_{A,B}(\gamma)&\stackrel{\eqref{ineq:polyweight}}{\leq} \left(1 + \frac{1}{\lfloor q/2 \rfloor}\right)^{|\gamma|}\left(1 - \frac{1}{\lceil q/2 \rceil}\right)^{|\partial(\gamma)|}\\
&\leq\left(1 + \frac{1}{\lfloor q/2 \rfloor}\right)^{k}\left(1 - \frac{1}{\lceil q/2 \rceil}\right)^{d|\gamma|- |\gamma|-\bO(1)}=\bO_k\left(\delta_{A, B}^{d|\gamma|}\right).\\
\end{split}
\]
Therefore, for a cluster $\Gamma$ of size $\lVert \Gamma\rVert=k$, we have
\begin{equation}\label{eq:clutribound}
\we_{A,B}(\Gamma)=\phi(H_{\cP}(\Gamma))\prod_{\gamma\in \Gamma}\we_{A,B}(\gamma)\stackrel{\eqref{ursell}}{=}\bO_k\left(\delta_{A, B}^{dk}\right).
\end{equation}
Next, observe that for a cluster $\Gamma$ with $\lVert \Gamma\rVert=k$, $V(\Gamma)$ is a 2-linked set of size at most $k$ (by the definition of the cluster). Therefore, by \Cref{lem:numlinkset}, the number of options for $V(\Gamma)$ is at most $\bO_k\left(N\cdot d^{2(k-1)}\right)$.
Notice that given $V(\Gamma)$, there are only $\bO(1)$ possibilities for $\Gamma$.
By putting all ingredients together, we have that, for a fixed integer $k$, 
\begin{equation}\label{ineq:fixk}
L_{A, B}(k)=\sum_{\Gamma\in \mathcal{C},\  \lVert \Gamma\rVert=k}\we_{A,B}(\Gamma) = \bO_k\left(N d^{2(k-1)}\delta_{A, B}^{dk}\right).
\end{equation}

Next, we show \eqref{claim2 of lem 5.2}. 
By \Cref{lem:KP} and ~\eqref{eq:kot-thm2}, for any vertex $v\in V$ and $g(\gamma)$ as in \eqref{def:fgfun},
\[
\sum_{\Gamma\in \mathcal{C}, \Gamma\sim \{v\}}|\we_{A,B}(\Gamma)|\exp\left(g(\Gamma)\right)\leq 1/d.
\]
Trivially every cluster $\Gamma$ is adjacent to some vertex in $V$, so
\beq{N over d}
\sum_{\Gamma\in \mathcal{C}}|\we_{A,B}(\Gamma)|\exp\left(g(\Gamma)\right)\leq N/d.
\enq
By \Cref{lem:isoper} (ii) and the definition of $g(\gamma)$, we have
\[
g(\gamma)\geq
\begin{cases}
d|\gamma|/(36q) & \text{ if } |\gamma|\leq d^9\\
d^8 & \text{ otherwise}.
\end{cases}
\]
Therefore, for each $k\in\NN$ and any cluster $\Gamma$ of size at least $k$, we have $g(\Gamma)=\sum_{\gamma\in\Gamma}g(\gamma)\geq dk/(36q)$. 
Now, take $K=K(t)$ to be the smallest integer that satisfies
\[
\exp\left(-d(K+1)/(36q))\right) \leq \delta_{A, B}^{dt}\cdot \frac{d}{N}
\]
(such $K$ exists since $N\sim 2^{2d}/\sqrt{\pi d}$). Then we have
\[
\begin{split}
\sum_{k>K}|L_{A, B}(k)|&\leq \sum_{\Gamma\in \mathcal{C},\  \lVert \Gamma\rVert\geq K+1}|\we_{A,B}(\Gamma)|=\sum_{\Gamma\in \mathcal{C},\  \lVert \Gamma\rVert\geq K+1}|\we_{A,B}(\Gamma)|\exp\left(g(\Gamma)\right)\exp\left(-g(\Gamma)\right)\\
&\leq \delta_{A, B}^{dt}\cdot \frac{d}{N}\cdot\sum_{\Gamma\in \mathcal{C}}|\we_{A,B}(\Gamma)|\exp\left(g(\Gamma)\right) \stackrel{\eqref{N over d}}{\leq} \delta_{A, B}^{dt}\cdot \frac{d}{N}\cdot \frac{N}{d}=\delta_{A, B}^{dt}.
\end{split}
\]
\end{proof}

\subsection{Almost all colorings are captured by at most one polymer model}
For a principal parition $(A, B)$, we say that a coloring $f\in C_q(\cB_d)$ is \textit{captured} by the polymer model $\Xi_{A, B}$ if every 2-linked components of $X_{A, B}(f)$ is of size at most $2^{-\beta d}N$ (where $\beta$ is as in \Cref{lem.step1}). By the definition of $\Xi_{A, B}$, 
\begin{equation}\label{ineq:capture}
\text{the number of colorings captured by $\Xi_{A, B}$ is exactly }
(\lfloor q/2 \rfloor\lceil q/2 \rceil)^{N/2}\cdot\Xi_{A, B}.
\end{equation}
The following lemma says that almost all colorings are captured by at most one polymer model.
\begin{lem}\label{lem:capture1}
Let $q \geq 4$, and denote by $C_2$ the set of colorings in $C_q(\cB_d)$ captured by at least two of the polymer models $\Xi_{A, B}$'s. Then
\beq{eq:lem:capture1}
|C_2| \leq \exp\left(-\Omega\left(N/d^2\right)\right) (\lfloor q/2 \rfloor\lceil q/2 \rceil)^{N/2} \sum_{(A, B) \text{ principal}}\Xi_{A, B}.
\enq
\end{lem}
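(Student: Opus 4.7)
\emph{Plan.} My plan is to reduce $|C_2|$ to a union bound over pairs of distinct principal partitions, exploit the stringent color restriction forced on a coloring captured by two partitions, and split each summand by the total flaw size. Writing
\[
|C_2| \le \sum_{(A,B) \ne (A',B')} |C_{A,B,A',B'}|,
\]
where $C_{A,B,A',B'}$ denotes the set of colorings captured by both $\Xi_{A,B}$ and $\Xi_{A',B'}$, there are only $O_q(1)$ summands, so it suffices to show $|C_{A,B,A',B'}| \le \exp(-\Omega(N/d^2))\,(\lfloor q/2\rfloor\lceil q/2\rceil)^{N/2}\,\Xi_{A,B}$ for each pair. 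The structural observation I would use is that for $f \in C_{A,B,A',B'}$ and $v \in V \setminus (X_{A,B}(f) \cup X_{A',B'}(f))$, $f_v$ must lie in $A \cap A'$ (if $v \in \cL_d$) or in $B \cap B'$ (if $v \in \cL_{d-1}$); since $(A,B) \ne (A',B')$, this forces $f_v$ into a set strictly smaller than the corresponding half of $\cQ$. Classifying every $v \in V$ by its membership in $X_{A,B}(f)$ and $X_{A',B'}(f)$ and multiplying the per-vertex color counts gives that for fixed $(X_{A,B}(f), X_{A',B'}(f)) = (X, X')$, the number of colorings is bounded by $(\lceil q/2\rceil - 1)^N$ (except in the degenerate case $|A \cap A'| = 0$, which is treated below).

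Next I would split $C_{A,B,A',B'}$ according to whether both $|X_{A,B}(f)|$ and $|X_{A',B'}(f)|$ are below $N/(2d)$ (\emph{small-flaw case}) or at least one exceeds $N/(2d)$ (\emph{large-flaw case}). In the small-flaw case, the number of possible pairs $(X, X')$ is $\binom{N}{<N/(2d)}^2 = \exp(O(N\log d / d))$, and combining with the per-pair bound $(\lceil q/2 \rceil - 1)^N$ and using $(1 - 2/q)^N = \exp(-\Omega(N))$ shows this case contributes at most $\exp(-\Omega(N))\,(\lfloor q/2\rfloor\lceil q/2\rceil)^{N/2}$, which is dominated by the target $\exp(-\Omega(N/d^2))\,(\lfloor q/2\rfloor\lceil q/2\rceil)^{N/2}\,\Xi_{A,B}$. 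The degenerate subcase $|A \cap A'| = 0$ (i.e., $A$ and $A'$ complementary) is automatically in the large-flaw regime since then $|X_{A,B}(f)| + |X_{A',B'}(f)| = N$.

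For the large-flaw case, by symmetry assume $|X_{A,B}(f)| \ge N/(2d)$. Let $\mu_{A,B}$ be the probability measure on $\Omega_{\cP}$ with density proportional to $\prod_{\gamma \in \Lambda} \we_{A,B}(\gamma)$, so the number of such $f$ equals $(\lfloor q/2\rfloor\lceil q/2\rceil)^{N/2}\,\Xi_{A,B} \cdot \mu_{A,B}\bigl(\sum_{\gamma}|\gamma| \ge N/(2d)\bigr)$. Markov's inequality applied to $\exp(\sum_{\gamma}|\gamma|/d)$ with the tilted weight $\tilde\we_{A,B}$ of \Cref{lem:KP} gives
\[
\mu_{A,B}\!\left(\sum_{\gamma \in \Lambda} |\gamma| \ge \frac{N}{2d}\right) \le \exp\!\left(-\frac{N}{2d^2}\right) \cdot \frac{\tilde\Xi_{A,B}}{\Xi_{A,B}}.
\]
A term-by-term comparison of the two cluster expansions, using $\tilde L_{A,B}(k) = e^{k/d}\,L_{A,B}(k)$ together with \Cref{lem:erroresti}, shows $\ln(\tilde\Xi_{A,B}/\Xi_{A,B}) = O(N\delta_{A,B}^d / d) = o(N/d^2)$ since $\delta_{A,B}^d$ decays exponentially in $d$. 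So the large-flaw contribution is at most $\exp(-\Omega(N/d^2))\,(\lfloor q/2\rfloor\lceil q/2\rceil)^{N/2}\,\Xi_{A,B}$. Summing both cases and then summing over the $O_q(1)$ pairs $(A,B), (A',B')$, using $\sum_{(A,B)}\Xi_{A,B} \ge \Xi_{A,B}$, concludes the argument. The hardest part is the tail estimate in the large-flaw case: although \Cref{lem:KP} guarantees convergence of the cluster expansions for both $\we_{A,B}$ and $\tilde\we_{A,B}$, a careful term-by-term bound via \Cref{lem:erroresti} is needed to ensure that the inflation $N\delta_{A,B}^d/d$ in $\ln(\tilde\Xi_{A,B}/\Xi_{A,B})$ is genuinely dominated by the $N/(2d^2)$ savings from Markov.
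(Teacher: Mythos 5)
Your proof is correct, but it takes a genuinely different route from the paper's. The paper deduces the lemma from \Cref{lem:defect.shorten}: under the auxiliary measure $\hat\mu_q$ conditioned on $\mathbf D=(A,B)$, almost every coloring is $s$-balanced with respect to $(A,B)$ (for $s=1/q^2$), no coloring can be $s$-balanced with respect to two distinct principal partitions, and the exact identity $\pr(\mathbf f=f\mid \mathbf D=(A,B))=\bigl(\Xi_{A,B}(\lfloor q/2\rfloor\lceil q/2\rceil)^{N/2}\bigr)^{-1}$ for captured $f$ converts the probability bound into the stated count. You instead split each doubly-captured class by total flaw size: in the small-flaw regime you use the palette restriction (every vertex of a coloring captured by both $(A,B)$ and $(A',B')$ is confined to one of the four intersections $A\cap A'$, $A\cap B'$, $B\cap A'$, $B\cap B'$, each of size at most $\lceil q/2\rceil-1$ in the non-degenerate case), so a direct union bound over flaw pairs already gives $\exp(-\Omega(N))$ savings; in the large-flaw regime you prove the tail bound on $|\mathbf\Lambda|$ via tilting and Markov, which is precisely the paper's \Cref{lem:defectsize} (proved in the appendix by the identical argument, including the bound $\ln\tilde\Xi_{A,B}=O(N\delta_{A,B}^d)$ from rerunning \Cref{lem:erroresti} with the extra $e^{k/d}$ factor --- note \Cref{lem:erroresti} as stated applies to $\we$, not $\tilde\we$, so strictly you need that rerun rather than the lemma itself). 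Your handling of the degenerate pair $(A',B')=(B,A)$, which forces $X_{A,B}(f)\cup X_{B,A}(f)=V$ and hence lands in the large-flaw case, is the right way to cover the one situation where the palette restriction fails. What the two approaches buy: yours is more self-contained for this lemma and avoids the Chernoff/balancedness machinery entirely; the paper's routes through \Cref{lem:defect.shorten} because that machinery is needed anyway for \Cref{notMT}, so the marginal cost there is essentially zero. Both proofs share the same two essential ingredients --- the exact correspondence between $\nu_{A,B}$-probabilities and counts of captured colorings, and the exponential tail on the total polymer size.
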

\nin We will use \Cref{lem:defect.shorten} below to prove \Cref{lem:capture1}. The proof of \Cref{lem:defect.shorten} is almost identical to the proof of \cite[Lemma~11.2]{JK} and we put it in \Cref{appendix} for the sake of completeness.

As in \cite{JK}, it is helpful to consider an auxiliary probability measure $\hat{\mu}_q$ on $C_q(\cB_d)$ (instead of uniform probability measure).

\begin{mydef} \label{def:muh}
For an $f \in C_q(\cB_d)$, let $\hat{\mu}_q(f)$ denote the probability that $f$ is selected by the following four-step process:

\begin{itemize}
    \item[1.] Choose a principal partition $(A, B)$ uniformly at random;
    \item[2.] Choose $\Lambda \in \Omega_{\cP}$ from the distribution $\nu_{A, B}$, where
\begin{equation}\label{def:rd}
\nu_{A, B}(\Lambda):=\frac{\prod_{\gamma\in\Lambda}\we_{A, B}(\gamma)}{\Xi_{A, B}} \quad \mbox{for each $\Lambda\in\Omega_{\cP}$};
\end{equation}
    \item[3.] With $S:=\bigcup_{\gamma\in\Lambda}\gamma$, select a coloring $f\in \hat{\chi}_{A, B}(S)$ uniformly at random;
    \item[4.] Independently assign each $v\in \cL_d\setminus S^+$ ($v\in \cL_{d-1}\setminus S^+$, resp.) the color $c\in A$ with probability $1/|A|$ (the color $c\in B$ with probability $1/|B|$, resp.).
\end{itemize}
\end{mydef}

For $s>0$, say $f$ is \textit{$s$-balanced with respect to $(A, B)$} if for each $c\in A$ ($c \in B$, resp.), the proportion of vertices of $\cL_d$ ($\cL_{d-1}$, resp.) colored $c$ is $1/|A|\pm s$ ($1/|B|\pm s$, resp.). The following lemma says that a typical coloring sampled from the measure $\hat{\mu}_q$ is well-balanced with respect to some principal partition.

\begin{lem}\label{lem:defect.shorten}
Let $\mathbf f$ be a random coloring drawn from the distribution $\hat{\mu}_q$, and denote by $\mathbf D$ the principal partition selected at Step 1.
Then there is a constant $L=L(q)$ for which the following holds: if $(A, B)$ is a principal partition and $Ld^2\delta_{A, B}^d \leq s\leq 1$, then
\[
\pr\left(\text{$\mathbf f$ is $s$-balanced w.r.t. $(A, B)$} \mid \mathbf D=(A, B) \right) \geq 1 - \exp\left(-\Omega\left(s^2N\right)\right) - \exp\left(-\Omega\left(sN/d^2\right)\right).
\]

\end{lem}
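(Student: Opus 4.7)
The plan is to decompose, for each color $c \in A$, the count $Y_c := |\{v \in \cL_d : \bbf_v = c\}|$ into contributions from inside and outside $S^+$, and handle these two pieces by separate concentration arguments. The vertices outside $S^+$ are colored i.i.d.\ uniformly on $A$ by Step~4 of \Cref{def:muh}, so conditional on the output of Steps~1--3, $Y_c^{\text{out}} := |\{v \in \cL_d \setminus S^+ : \bbf_v = c\}|$ has law $\mathrm{Bin}(|\cL_d \setminus S^+|, 1/|A|)$, and Chernoff's inequality gives
\[
\pr\big(|Y_c^{\text{out}} - |\cL_d \setminus S^+|/|A|| > sN/4 \,\big|\, S^+\big) \le 2\exp(-\gO(s^2 N)).
\]
Combined with the trivial bound $|Y_c^{\text{in}} - |\cL_d \cap S^+|/|A|| \le |S^+|$, the coloring $\bbf$ will lie within $sN/2$ of the target $N/(2|A|)$ for color $c$ whenever $|S^+| \le sN/4$ and the Chernoff event holds. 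The symmetric argument handles colors in $B$ on $\cL_{d-1}$; union-bounding over the $O(1)$ colors reduces the lemma to establishing
\beq{eq:plan.Splus}
\pr_{\nu_{A,B}}\big(|S^+| > sN/4\big) \le \exp(-\gO(sN/d^2)).
\enq

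For \eqref{eq:plan.Splus}, I would use that $|S^+| \le (d+1)|S|$ in the $d$-regular graph $\cB_d$ and apply exponential Markov at rate $1/d$:
\[
\pr_{\nu_{A,B}}(|S| \ge t) \le e^{-t/d}\, \mathbb{E}_{\nu_{A,B}}[e^{|S|/d}] = e^{-t/d}\, \frac{\tilde\Xi_{A,B}}{\Xi_{A,B}}.
\]
Choosing $t := sN/(4(d+1))$, the prefactor already contributes $e^{-\gO(sN/d^2)}$, so it suffices to show $\ln(\tilde\Xi_{A,B}/\Xi_{A,B}) = o(sN/d^2)$. From the cluster expansion,
\[
\ln(\tilde\Xi_{A,B}/\Xi_{A,B}) = \sum_{\Gamma \in \cC} \we_{A,B}(\Gamma)\big(e^{\lVert\Gamma\rVert/d} - 1\big),
\]
the dominant contribution comes from size-$1$ clusters: $(e^{1/d}-1)L_{A,B}(1) = O(N\delta_{A,B}^d/d)$, since each singleton polymer $\{v\}$ has weight $O(\delta_{A,B}^d)$ by \eqref{ineq:polyweight}. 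For clusters with $\lVert\Gamma\rVert = k \ge 2$, the estimate $L_{A,B}(k) = O_k(Nd^{2(k-1)}\delta_{A,B}^{dk})$ from \eqref{ineq:fixk} (together with \Cref{lem:erroresti}) shows that successive terms shrink by a factor $O(d^2\delta_{A,B}^d) = O(1/L)$ thanks to the hypothesis $s \ge Ld^2\delta_{A,B}^d$, so the full sum is $O(N\delta_{A,B}^d/d) = O(sN/(Ld^3))$. Taking $L = L(q)$ large enough absorbs this into the main exponent.

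The main obstacle is precisely this refined bound $\ln(\tilde\Xi_{A,B}/\Xi_{A,B}) = O(N\delta_{A,B}^d/d)$. The coarser bound $\sum_\Gamma |\tilde\we_{A,B}(\Gamma)| \le N/d$ that follows directly from \Cref{lem:KP} and \eqref{eq:kot-thm2} (as used in the proof of \Cref{lem:erroresti}) is far too weak, and one must exploit both the $\delta_{A,B}^d$-smallness of the singleton polymer weights and the rapid geometric decay of the higher-order cluster expansion terms to extract the extra factor of $\delta_{A,B}^d$. With that refinement in hand, the rest is a routine combination of exponential Markov for $|S|$ and Chernoff for the independent coloring of $V \setminus S^+$.
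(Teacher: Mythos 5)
Your proposal is correct and follows essentially the same route as the paper: the paper's Lemma~\ref{lem:defectsize} is exactly your exponential Markov step with $\mathbb{E}[e^{|S|/d}]=\tilde\Xi_{A,B}/\Xi_{A,B}$ controlled by the cluster expansion, and its Lemma~\ref{lem:defect}(iii) is your split of $Y_c$ into the Chernoff-concentrated part outside $S^+$ plus the trivial $|S^+|$ correction. The only (harmless) difference is that you work to extract the refined bound $\ln(\tilde\Xi_{A,B}/\Xi_{A,B})=O(N\delta_{A,B}^d/d)$, which you flag as the main obstacle, whereas the coarser estimate $\ln\tilde\Xi_{A,B}=O(N\delta_{A,B}^d)$ (obtained in \eqref{ineq:cpf} by rerunning the argument of Lemma~\ref{lem:erroresti} with the tilted weights, and bounding $\tilde\Xi_{A,B}/\Xi_{A,B}\le\tilde\Xi_{A,B}$) already suffices, since $s\ge Ld^2\delta_{A,B}^d$ makes $CN\delta_{A,B}^d$ a small fraction of $sN/d^2$ once $L$ is large.
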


\begin{proof}[Proof of Lemma~\ref{lem:capture1}]
Let $\mathbf f$ and $\mathbf D$ be as in \Cref{lem:defect.shorten}. 
Fix two distinct principal color partitions $(A, B)$ and $(A', B')$, and let $\cF$ be the set of colorings which are captured by both $\Xi_{A, B}$ and $\Xi_{A', B'}$. Set $s=1/q^2$ and observe that we may partition $\cF=\cF_{A, B}\cup \cF_{A', B'}$, where $\cF_{A, B}$ ($\cF_{A', B'}$, resp.) consist of elements of $\cF$ that are \textit{not} $s$-balanced w.r.t. $(A, B)$ ($(A', B')$, resp.) (because no coloring can be $s$-balanced w.r.t. two distinct principal partitions).

It follows from \Cref{lem:defect.shorten} that
\begin{equation}\label{ineq:capture1}
\pr\left( \mathbf f\in\cF_{A, B} \mid \mathbf D=(A, B) \right) \leq \exp\left(-\Omega\left(N/d^2\right)\right).
\end{equation}
On the other hand, for any $f$ that is captured by $\Xi_{A, B}$, let the $2$-linked components of $X_{A, B}(f)$ be $\gamma_1, \ldots, \gamma_k$, and write $\Lambda=\{\gamma_1, \ldots, \gamma_k\}$. Then by the definition of $\hat{\mu}_q$, we have
\[
\pr\left( \mathbf f=f \mid \mathbf D=(A, B) \right) = \frac{\prod_{\gamma\in \Lambda}\we_{A, B}(\gamma)}{\Xi_{A,B}}\cdot\frac{1}{|\chi_{A, B}(X_{A, B}(f))|} = \frac{1}{\Xi_{A,B}\cdot(\lfloor q/2 \rfloor\lceil q/2 \rceil)^{N/2}}.
\]
and therefore
\[
\pr\left( \mathbf f\in\cF_{A, B} \mid \mathbf D=(A, B) \right)=\sum_{f\in \cF_{A, B}}\pr\left( \mathbf f=f \mid \mathbf D=(A, B) \right) =\frac{|\cF_{A, B}|}{\Xi_{A,B}\cdot(\lfloor q/2 \rfloor\lceil q/2 \rceil)^{N/2}}.
\]
This, together with~\eqref{ineq:capture1}, shows that
\[
|\cF_{A, B}| \leq \exp\left(-\Omega\left(N/d^2\right)\right)(\lfloor q/2 \rfloor\lceil q/2 \rceil)^{N/2}\Xi_{A,B}.
\]
Similar bound can be obtained for $|\cF_{A', B'}|$, so 
\[
|C_2| \leq \exp\left(-\Omega\left(N/d^2\right)\right)(\lfloor q/2 \rfloor\lceil q/2 \rceil)^{N/2}\sum_{(A, B) \text{ principal}}\Xi_{A, B}.
\]
\end{proof}

\section{Proof of \Cref{realMT}}\label{sec.realMT}
Recall from \eqref{def:threpolysize} that $T(q)$ is the smallest integer $t$ such that $2 + t\log(1 - 2/q)< 0$.
Combining Lemmas \ref{lem.step1}, \ref{lem:erroresti}, and \ref{lem:capture1}, we obtain the following theorem on $c_q(\cB_d)$, the number of proper colorings of $\cB_d$.
\begin{thm}\label{MT1'}
Let $q \ge 4$ and $q$ be even. Then we have 
\[
c_q(\cB_d) = (1 + o(1))(q/2)^{N} \sum_{(A, B) \text{ principal}}\exp\left(\sum_{k=1}^{t-1}L_{A, B}(k) + \varepsilon_t\right),
\]
where $L_{A, B}(k)$ is as defined in ~\eqref{def:LAB} and  
\beq{eps.bound} \varepsilon_t = \bO\left(Nd^{2(t-1)}\delta_{A, B}^{dt}\right).\enq
In particular, 
\[
c_q(\cB_d) = (1 + o(1))(q/2)^{N} \sum_{(A, B) \text{ principal}}\exp\left(\sum_{k=1}^{\thre(q)-1}L_{A, B}(k)\right).
\]
\end{thm}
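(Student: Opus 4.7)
The plan is to combine three ingredients already developed in the paper: Lemma~\ref{lem.step1} (almost every coloring admits a ground state), Lemma~\ref{lem:capture1} (double counting between polymer models is negligible), and Lemma~\ref{lem:erroresti} (tail bound for the cluster expansion). First, Lemma~\ref{lem.step1} produces, for all but a $2^{-\alpha d}$-fraction of $f \in C_q(\cB_d)$, a principal $(A,B)$ with $|X_{A,B}(f)| < 2^{-\beta d}N$; in particular every $2$-linked component of $X_{A,B}(f)$ has size less than $2^{-\beta d}N$, so $f$ is captured by $\Xi_{A,B}$. Writing $C_1$ for the colorings captured by exactly one polymer model and $C_2$ for those captured by at least two, we get $c_q(\cB_d) = |C_1| + |C_2| + o(1)\cdot c_q(\cB_d)$. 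Combined with the exact identity \eqref{ineq:capture}, Lemma~\ref{lem:capture1} absorbs the overcount in $|C_2|$, and (using $\lfloor q/2\rfloor\lceil q/2\rceil = (q/2)^2$ since $q$ is even) we arrive at
\[
c_q(\cB_d) \;=\; (1 + o(1))\,(q/2)^{N} \sum_{(A,B)\text{ principal}} \Xi_{A,B}.
\]

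Next, I would feed the convergent cluster expansion of Lemma~\ref{lem:KP} into Lemma~\ref{lem:erroresti}. For any fixed $t \ge 1$, writing
\[
\ln \Xi_{A,B} \;=\; \sum_{k=1}^{t-1} L_{A,B}(k) + \varepsilon_t, \qquad \varepsilon_t := \sum_{k \ge t} L_{A,B}(k),
\]
Lemma~\ref{lem:erroresti} gives $|\varepsilon_t| = \bO_t\bigl(N d^{2(t-1)}\delta_{A,B}^{dt}\bigr)$, which is exactly the bound \eqref{eps.bound}. Exponentiating and substituting into the display above produces the first formula of the theorem.

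For the ``in particular'' statement, I would set $t = \thre(q)$ and observe that $\delta_{A,B} = 1 - 2/q$ for every principal $(A,B)$ when $q$ is even. Using $N \sim 2^{2d}/\sqrt{\pi d}$,
\[
\log\Bigl(Nd^{2(\thre(q)-1)} (1 - 2/q)^{d\thre(q)}\Bigr) \;=\; d\bigl(2 + \thre(q)\log(1 - 2/q)\bigr) + \bO(\log d),
\]
which tends to $-\infty$ by the very definition of $\thre(q)$ in \eqref{def:threpolysize}. Hence $\varepsilon_{\thre(q)} = o(1)$ and $\exp(\varepsilon_{\thre(q)}) = 1 + o(1)$, which can be absorbed into the outer $(1+o(1))$ factor since there are only boundedly many principal partitions.

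The serious technical work is already contained in Lemmas~\ref{lem.step1}, \ref{lem.step2}, \ref{lem:KP}, \ref{lem:erroresti}, and \ref{lem:capture1}, so at this stage the argument is essentially bookkeeping. The only point needing slight care is ensuring that the exceptional sets coming from Lemma~\ref{lem.step1} (colorings admitting no ground state) and Lemma~\ref{lem:capture1} (colorings captured by more than one model) are a priori negligible \emph{relative to the main term} $(q/2)^N \sum_{(A,B)}\Xi_{A,B}$, rather than merely relative to $c_q(\cB_d)$; but since each $\Xi_{A,B} \ge 1$ (the empty set contributes $1$), the trivial lower bound $\sum_{(A,B)}\Xi_{A,B} \gtrsim 1$ is more than enough for Lemma~\ref{lem:capture1}'s $\exp(-\Omega(N/d^2))$ factor to swallow both error terms.
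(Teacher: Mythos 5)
Your proposal is correct and follows essentially the same route as the paper: sandwich $c_q(\cB_d)$ between $(q/2)^N\sum_{(A,B)}\Xi_{A,B}$ (up to the $2^{-\alpha d}$-fraction from Lemma~\ref{lem.step1} and the double-counting controlled by Lemma~\ref{lem:capture1}), then truncate the cluster expansion via Lemma~\ref{lem:erroresti}. Your explicit verification that $Nd^{2(T(q)-1)}\delta_{A,B}^{dT(q)}=o(1)$ from the definition of $T(q)$, and the remark that the error terms are negligible relative to the main term because $\Xi_{A,B}\ge 1$, are details the paper leaves implicit but are handled correctly here.
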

\begin{proof}
By \Cref{lem.step1}, all but $2^{-\alpha d}c_q(\cB_d)$ colorings are captured by at least one of the polymer models $\Xi_{A, B}$'s.
Therefore,
\[
(1-o(1))(q/2)^{N}\sum_{(A, B) \text{ principal}}\Xi_{A, B} \stackrel{\eqref{eq:lem:capture1}}{\leq} c_q(\cB_d) \stackrel{\eqref{ineq:capture}}{\leq} (q/2)^{N}\sum_{(A, B) \text{ principal}}\Xi_{A, B} + 2^{-\alpha d}c_q(\cB_d),
\]
thus
\[
c_q(\cB_d) = (1 + o(1))(q/2)^{N} \sum_{(A, B) \text{ principal}}\Xi_{A, B}.
\]
The rest of the proof follows from \eqref{def:Lsum} and \Cref{lem:erroresti}.
\end{proof}

Theorem~\ref{MT1'} provides a precise asymptomatic formula for $c_q(\cB_d)$ for even $q$. For example, if $q=4$, then $\thre(q)=3$, so
\[
c_4(\cB_d) = (1 + o(1))(q/2)^{N} \sum_{(A, B) \text{ principal}}\exp\left(L_{A, B}(1) + L_{A, B}(2)\right);
\]
if $q=6$, then $\thre(q)=4$, and therefore
\[
c_6(\cB_d) = (1 + o(1))(q/2)^{N} \sum_{(A, B) \text{ principal}}\exp\left(L_{A, B}(1) + L_{A, B}(2) + L_{A, B}(3)\right).
\]
Note that by the definition of cluster and $L_{A, B}(k)$, given $q$ and a fixed integer $k$, one can obtain an explicit formula for $L_{A, B}(k)$ in a finite time, and so for $c_q(\cB_d)$.

By symmetry, we simply denote $L_{A, B}(k)$ by $L_k$ for all principal partitions $(A, B)$. We next provide an explicit computation of $L_1$ and $L_2$. Then \Cref{realMT} will follow from \Cref{MT1'}, together with the combination of \eqref{L1.compute}, \eqref{L2.compute}, and \eqref{eps.bound}. We remark that the computation of $L_k$ for any fixed $k$ will follow from a similar strategy.

\begin{example}[Computing $L_1$]
Every polymer of size 1 is a single vertex of $\mathcal{L}_d$ or $\mathcal{L}_{d-1}$. There are $N$ of them, and each has weight $\displaystyle \frac{(q/2)(q/2-1)^d}{(q/2)^{d+1}}=(1 - 2/q)^d$. There is only one type of cluster of size 1, which consists of a polymer of size 1, with Ursell function 1. Therefore we have 
\beq{L1.compute} L_1=N(1 - 2/q)^d.\enq
\end{example}

\begin{example}[Computing $L_2$]
A polymer of size 2 is either a set of two vertices of $\mathcal{L}_d$ (or $\mathcal{L}_{d-1}$) sharing a common neighbor, or an edge of $\cB_d$.
For the first type, there are $N\binom{d}{2}$ of them and each has weight 
\[\frac{(q/2)(q/2-1)^{2d-1}}{(q/2)^2(q/2)^{2d-1}} + \frac{(q/2)(q/2-1)(q/2-1)^{2d-2}(q/2-2)}{(q/2)^2(q/2)^{2d-1}}=(1 - 2/q)^{2d}. \]
For the second type, there are $Nd/2$ of them and each has weight $\displaystyle \frac{(q/2)^2(q/2-1)^{2(d-1)}}{(q/2)^{2d}}=(1 - 2/q)^{2(d-1)}$.

There are two types of clusters of size 2. The first type is an ordered pair of adjacent polymers of size 1, whose Ursell function is $-1/2$. The number of such clusters is $N+Nd(d-1) + Nd$, and each of them has weight $(1 - 2/q)^{2d}$.
The second type is a single polymer of size 2 with Ursell function 1. By the above discussion on polymers, $N\binom{d}{2}$ of such clusters has weight $(1 - 2/q)^{2d}$, while the rest $Nd/2$ of them has weight $(1 - 2/q)^{2(d-1)}$.
Therefore, we have
\beq{L2.compute}
\begin{split}
L_2&=-\frac12(N+Nd(d-1) + Nd)(1 - 2/q)^{2d} + N\binom{d}{2}(1 - 2/q)^{2d} + \frac{1}{2}Nd(1 - 2/q)^{2(d-1)}\\
&=N(1 - 2/q)^{2d}\left(\frac{d}{2}(1 - 2/q)^{-2} - \frac{d}{2}-\frac12\right).
\end{split}
\enq
\end{example}

\section{Proof of \Cref{realMT2}}\label{sec.realMT2}
To prove \Cref{realMT2}, we build a new polymer model to count \textit{typical} colorings.
Define a set of polymers
\begin{equation*}
\cP_T:=\left\{\gamma\subseteq V: \gamma \text{ is non-empty and 2-linked, } |\gamma|\leq T(q)-1\right\}.
\end{equation*}
As before, $\gamma$, $\gamma'$ $\in \cP_T$ are adjacent iff $\gamma\cup \gamma'$ is a $2$-linked set; for a principal partition $(A, B)$, the weight function $\we_{A,B}(\gamma)$ is defined as in~\eqref{def:wefun}; the corresponding polymer model partition function is defined as
\[
\Xi(\cP_T, \we_{A, B}):=\sum_{\Lambda\in\Omega_{\cP}}\prod_{\gamma\in\Lambda}\we_{A, B}(\gamma).
\]
Observe that the number of $q$-colorings $f$ such that every $2$-linked component of $X_{A,B}(f)$ is of size \textit{less than} $\thre(q)$ is exactly $\Xi(\cP_T, \we_{A, B})$. Therefore, the theorem will follow from Theorem~\ref{MT1'} once we show
\begin{equation}\label{ineq:typ1}
\Xi(\cP_T, \we_{A, B}) = (1 +o(1))\Xi_{A, B}
\end{equation}
for every principal partitions $(A, B)$.

Let $\mathcal{C}_T$ be the collection of all clusters $(\gamma_1, \gamma_2, \ldots)$ where $\gamma_i\in \cP_T$, and
$L^T_{A, B}(k):=\sum_{\Gamma\in \mathcal{C}_T,\  \lVert \Gamma\rVert=k}\we_{A,B}(\Gamma)$. 
Note that the proof of \Cref{lem:KP} also implies that $\ln\Xi(\cP_T, \we_{A, B})$ converges: indeed, as now we only consider small polymers of constant size, the proof is even simpler. 
Moreover, similarly to \Cref{lem:erroresti}, one can show that for any fixed $t\geq 1$,
\[
\sum_{k=t}^{\infty}|L^T_{A, B}(k)|=\bO\left(Nd^{2(t-1)}\delta_{A, B}^{dt}\right),
\]
and in particular, by the definiton of $T(q)$, see~\eqref{def:threpolysize}, we have
\[
\sum_{k=T(q)}^{\infty}|L^T_{A, B}(k)|=o(1).
\]
Therefore,
\begin{equation}\label{ineq:typ2}
\Xi(\cP_T, \we_{A, B})=\exp\left(\sum_{k=1}^{\infty}L^T_{A, B}(k)\right)=(1 + o(1))\exp\left(\sum_{k=1}^{T(q)-1}L^T_{A, B}(k)\right).
\end{equation}
By the definition of $\lVert \Gamma\rVert$, we have
\begin{equation}\label{ineq:typ3}
L^T_{A, B}(k) = L_{A, B}(k) 
\end{equation}
for every $1\leq k \leq T(q)-1$. Now \eqref{ineq:typ1} follows from \eqref{ineq:typ2}, \eqref{ineq:typ3} and Theorem~\ref{MT1'}.

\medskip

\nin \textbf{Acknowledgement.} This material is based upon work supported by the National Science Foundation
under Grant No. DMS-1928930 and the National Security Agency under Grant No.
H98230-22-1-0018 while the authors participated in a program hosted by the
Mathematical Sciences Research Institute in Berkeley, California, during the
summer of 2022.

\appendix

\section{Proof of \Cref{lem:defect.shorten}}\label{appendix}

We define $|\Lambda|:=\sum_{\gamma\in\Lambda}|\gamma|$ for  $\Lambda\in\Omega_{\cP}$.

\begin{lem}\label{lem:defectsize}
Let $(A, B)$ be a principal partition and $\mathbf{\Lambda}_{A, B}$ be drawn from $\nu_{A, B}$ defined in~\eqref{def:rd}. Then there exist a constant $L=L(q)$ such that if $t\geq Ld\delta_{A, B}^d$, then 
\[
\pr(|\mathbf{\Lambda}_{A, B}|\geq tN)\leq \exp\left(-\Omega\left(tN/d\right)\right).
\]
\end{lem}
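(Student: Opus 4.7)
The plan is a standard exponential-Markov (Chernoff) argument, exploiting that the modified weight $\tilde{\we}_{A,B}$ in \eqref{def:genepmodel} is precisely what encodes the tilted partition function. Writing
\[
\pr(|\mathbf{\Lambda}_{A,B}|\geq tN) \leq e^{-tN/d}\,\mathbb{E}_{\nu_{A,B}}\!\left[e^{|\mathbf{\Lambda}_{A,B}|/d}\right],
\]
and observing that $e^{|\Lambda|/d}=\prod_{\gamma\in\Lambda}e^{|\gamma|/d}$, one immediately reads off $\mathbb{E}_{\nu_{A,B}}[e^{|\mathbf\Lambda|/d}]=\tilde\Xi_{A,B}/\Xi_{A,B}$ from the definition of $\nu_{A,B}$. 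Thus the whole lemma reduces to an upper bound on the ratio $\tilde\Xi_{A,B}/\Xi_{A,B}$.

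By Lemma \ref{lem:KP} the cluster expansions of both $\ln\Xi_{A,B}$ and $\ln\tilde\Xi_{A,B}$ converge absolutely, and since $\tilde{\we}_{A,B}(\Gamma)=\we_{A,B}(\Gamma)e^{\|\Gamma\|/d}$, subtracting them term by term gives
\[
\ln\!\left(\tilde\Xi_{A,B}/\Xi_{A,B}\right) = \sum_{\Gamma\in\cC} \we_{A,B}(\Gamma)\big(e^{\|\Gamma\|/d}-1\big) = \sum_{k\geq 1}\big(e^{k/d}-1\big)\,L_{A,B}(k).
\]
I would bound this by splitting into a small-$k$ head and a tail. For $1\leq k\leq K(q)$ (a constant), the per-size bound $|L_{A,B}(k)|=\bO_k(Nd^{2(k-1)}\delta_{A,B}^{dk})$ from Lemma \ref{lem:erroresti} combined with $e^{k/d}-1=\bO(k/d)$ shows that the dominant contribution is the $k=1$ term $(1+o(1))N\delta_{A,B}^d/d$; successive terms decay geometrically by a factor of order $d^2\delta_{A,B}^d=o(1)$. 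For the tail $k>K$, I would reuse the $g$-weighted Kotecky--Preiss bound from Lemma \ref{lem:KP} applied to $(\cP,\tilde{\we}_{A,B})$, exactly as was done inside the proof of Lemma \ref{lem:erroresti}: this yields $\sum_{\|\Gamma\|=k}|\tilde{\we}_{A,B}(\Gamma)| \leq (N/d)e^{-\gO(dk)}$, which already absorbs the factor $e^{k/d}-1\leq e^{k/d}$, and choosing $K=K(q)$ sufficiently large makes the tail at most $N\delta_{A,B}^d/d$. Together these give $\ln(\tilde\Xi_{A,B}/\Xi_{A,B}) \leq C(q)\,N\delta_{A,B}^d/d$.

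Plugging back into the exponential-Markov bound yields
\[
\pr(|\mathbf{\Lambda}_{A,B}|\geq tN) \leq \exp\!\big(-tN/d + C(q)N\delta_{A,B}^d/d\big),
\]
and taking $L=L(q)$ large enough relative to $C(q)$ ensures that the hypothesis $t\geq Ld\delta_{A,B}^d$ makes the first term dominate by at least a factor of $2$, giving $\exp(-\gO(tN/d))$. The main obstacle is the tightness of the bound on $\tilde\Xi_{A,B}/\Xi_{A,B}$: a direct application of Lemma \ref{lem:KP} only yields $\sum_\Gamma |\tilde{\we}_{A,B}(\Gamma)|\leq N/d$, hence $\ln(\tilde\Xi/\Xi)\leq \bO(N/d)$, which would force $t=\Theta(1)$ and be useless for the regime of interest. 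The required refinement to $\bO(N\delta_{A,B}^d/d)$ comes from exploiting the additional factor $\|\Gamma\|/d$ hidden in $e^{\|\Gamma\|/d}-1$ and pairing it with the precise $\delta_{A,B}^{dk}$ decay of $L_{A,B}(k)$ --- it is the leading $\delta_{A,B}^d$ from the $k=1$ term that the hypothesis $t\geq Ld\delta_{A,B}^d$ is designed to beat.
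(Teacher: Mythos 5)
Your proof is correct and follows essentially the same route as the paper: exponential Markov with the identification $\mathbb{E}_{\nu_{A,B}}[e^{|\mathbf\Lambda|/d}]=\tilde\Xi_{A,B}/\Xi_{A,B}$, followed by a cluster-expansion bound on the tilted partition function. The only (harmless) difference is that the paper simply bounds $\tilde\Xi_{A,B}/\Xi_{A,B}\le\tilde\Xi_{A,B}=\exp(\bO(N\delta_{A,B}^d))$ by running the argument of \Cref{lem:erroresti} on the tilted weights and using $\Xi_{A,B}\ge 1$, whereas you estimate the difference of the two expansions term by term to get the slightly sharper $\bO(N\delta_{A,B}^d/d)$ --- either suffices under the hypothesis $t\ge Ld\delta_{A,B}^d$.
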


\begin{proof}
Recall from \eqref{def:genepmodel} the definition of $\tilde{\Xi}_{A, B}$.
By Lemma~\ref{lem:KP}, $\ln\tilde{\Xi}_{A, B}$ converges absolutely; moreover, a similar proof as for Lemma~\ref{lem:erroresti} shows that 
\begin{equation}\label{ineq:cpf}
\ln\tilde{\Xi}_{A, B} = \sum_{\Gamma\in \cC}\tilde{\we}_{A, B}(\Gamma) = \bO\left(N\delta_{A, B}^d\right).
\end{equation}
Observe that
\[
\frac{\tilde{\Xi}_{A, B}}{\Xi_{A, B}} = \frac{1}{\Xi_{A, B}}\sum_{\Lambda\in\Omega_{\cP}}\exp\left(\frac{|\Lambda|}{d}\right)\prod_{\gamma\in\Lambda}\we_{A, B}(\gamma)=\bE\exp\left(\frac{|\mathbf{\Lambda}_{A, B}|}{d}\right),
\]
and therefore this, with~\eqref{ineq:cpf}, implies that there exists an constant $C$ such that 
\[
\bE\exp\left(|\mathbf{\Lambda}_{A, B}|/d\right) \leq \exp\left(\ln\tilde{\Xi}_{A, B}\right)\leq  \exp\left(C\cdot N\delta_{A, B}^d\right).
\]
Take $L$ such that $L > C$. Then by Markov's inequality and $t\geq Ld\delta_{A, B}^d$, we have
\[
\begin{split}
\pr\left(|\mathbf{\Lambda}_{A, B}|\geq tN\right)
&=\pr\left(\exp\left(\frac{|\mathbf{\Lambda}_{A, B}|}{d}\right)\geq \exp\left( \frac{tN}{d}\right)\right)
\leq \bE\exp\left(\frac{|\mathbf{\Lambda}_{A, B}|}{d}\right)\exp\left(-\frac{tN}{d}\right)\\
&\leq  \exp\left(C\cdot N\delta_{A, B}^d\right)\exp\left(-tN/d\right)\leq \exp\left(-\Omega\left(tN/d\right)\right).
\end{split}
\]
\end{proof}


Define the \textit{flaw} of a coloring $f \in C_q(\cB_d)$ to be the set of vertices at which $f$ does not agree with its closest ground state coloring (breaking ties arbitrarily). 
Indeed, we prove the following strengthened version of Lemma~\ref{lem:defect.shorten} (which is also a crucial lemma for proving Theorem~\ref{notMT}).

\begin{lem}\label{lem:defect}
Let $\mathbf f$ be a random coloring drawn from the distribution $\hat{\mu}_q$ (see Definition~\ref{def:muh}). Denote by $\mathbf D$ the principal partition selected at Step 1, and let $\mathbf \Lambda$ be the random polymer configuration selected at Step 2. Let $\mathbf X$ denote the size of the flaw of $\mathbf f$ and $L$ as in \Cref{lem:defectsize}.
Then the following holds.
\begin{itemize}
\item[(i)] $\pr(\mathbf X = |\mathbf\Lambda|) \geq 1 - \exp\left(-\Omega\left(N/d^2\right)\right)$.

\item[(ii)] If $(A, B)$ is a principal partition and $Ld\delta^d_{A, B} \leq t \leq 1/(4d)$, then
\[
\pr(\mathbf X \geq tN \mid \mathbf D=(A, B)) \leq \exp\left(-\Omega\left(tN/d\right)\right).
\]

\item[(iii)] If $(A, B)$ is a principal partition and $10Ld^2\delta_{A, B}^d \leq s\leq 1$, then
\[
\pr\left(\text{$\mathbf f$ is $s$-balanced w.r.t. $(A, B)$} \mid \mathbf D=(A, B) \right) \geq 1 - \exp\left(-\Omega\left(s^2N\right)\right) - \exp\left(-\Omega\left(sN/d^2\right)\right).
\]

\end{itemize}
\end{lem}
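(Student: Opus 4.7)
I will establish \Cref{lem:defect} in the order (iii), (i), (ii), with (iii) as the principal input. The argument closely parallels \cite[Lemma 11.2]{JK}, substituting the polymer bounds on $\cB_d$ already developed in this section (in particular \Cref{lem:defectsize}) for the torus bounds used there.

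\emph{Part (iii).} Condition on $\mathbf D=(A,B)$ and on the polymer configuration $\mathbf\Lambda$ sampled in Step 2, and set $S:=\bigcup_{\gamma\in\mathbf\Lambda}\gamma$. Fix a color $c\in A$ and let $N_c$ denote the number of $v\in\cL_d$ with $\mathbf f_v=c$; decompose $N_c=N_c^{\mathrm{in}}+N_c^{\mathrm{out}}$ according to whether $v\in\cL_d\cap S^+$ or $v\in\cL_d\setminus S^+$. By Step 4 of \Cref{def:muh}, conditional on $(\mathbf\Lambda,\mathbf f_{S^+})$ the quantity $N_c^{\mathrm{out}}$ is a sum of $|\cL_d\setminus S^+|$ independent $\mathrm{Bernoulli}(1/|A|)$ variables, so a standard Chernoff bound yields $|N_c^{\mathrm{out}}-|\cL_d\setminus S^+|/|A||\le sN/4$ except on an event of probability $\exp(-\Omega(s^2N))$. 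To control the inside contribution, apply \Cref{lem:defectsize} with $t:=s/(4(d+1))$; the hypothesis $s\ge 10Ld^2\delta_{A,B}^d$ guarantees $t\ge Ld\delta_{A,B}^d$, so $|\mathbf\Lambda|\le tN$ and hence $|S^+|\le (d+1)|\mathbf\Lambda|\le sN/4$ except with probability $\exp(-\Omega(sN/d^2))$. On the intersection of these two good events the triangle inequality gives $|N_c-|\cL_d|/|A||\le sN$; a union bound over $c\in A\cup B$, together with the symmetric estimate on $\cL_{d-1}$, completes part (iii).

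\emph{Part (i) and (ii).} By the very definition of $\chi_{A,B}$, in the Step 3--4 construction every vertex of $V\setminus S$ is good with respect to $\mathbf D$ while every vertex of $S$ is bad, so $|X_{\mathbf D}(\mathbf f)|=|\mathbf\Lambda|$; therefore $\mathbf X=|\mathbf\Lambda|$ exactly when $\mathbf D$ attains the minimum of $|X_{(A',B')}(\mathbf f)|$ over principal $(A',B')$. Invoke part (iii) with $s=1/(4q^2)$: on the resulting $s$-balanced event, for any other principal partition $(A',B')\ne\mathbf D$ we have $A\setminus A'\ne\emptyset$, and a direct count gives that at least $(1/q-O(s))N\ge N/(2q)$ vertices of $\cL_d$ are colored from $A\setminus A'\subseteq B'$, hence lie in $X_{A',B'}(\mathbf f)$. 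Meanwhile \Cref{lem:defectsize} with $t=1/(3q)$ yields $|\mathbf\Lambda|<N/(3q)$ with probability $1-\exp(-\Omega(N/d))$. Combining these gives $\mathbf X=|\mathbf\Lambda|$ with probability $1-\exp(-\Omega(N/d^2))$, proving (i). Part (ii) then follows from the inclusion
\[
\{\mathbf X\ge tN\}\subseteq \{\mathbf X\ne|\mathbf\Lambda|\}\cup\{|\mathbf\Lambda|\ge tN\},
\]
whose two summands have probabilities $\exp(-\Omega(N/d^2))$ and $\exp(-\Omega(tN/d))$ respectively; the constraint $t\le 1/(4d)$ ensures $N/d^2\ge tN/d$, so the union is $\exp(-\Omega(tN/d))$.

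\emph{Main obstacle.} The critical calibration is the one in part (iii): the triangle-inequality step forces $|S^+|\lesssim sN$, which, via $|S^+|\le(d+1)|\mathbf\Lambda|$, requires $t\asymp s/d$ in \Cref{lem:defectsize}; this in turn pushes the regime of validity down to $s\gtrsim d^2\delta_{A,B}^d$, matching the lower bound in the statement. Once (iii) is set up, parts (i) and (ii) are routine corollaries and everything else (Chernoff for $N_c^{\mathrm{out}}$, the reduction from (i) to (ii), the comparison of $|X_{\mathbf D}(\mathbf f)|$ with $|X_{(A',B')}(\mathbf f)|$) is a straightforward adaptation of \cite{JK}.
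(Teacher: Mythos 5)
Your proposal is correct and follows essentially the same route as the paper: a Chernoff bound on the colors assigned outside $\mathbf{\Lambda}^+$ in Step 4, combined with \Cref{lem:defectsize} to control $|\mathbf{\Lambda}|$ (hence $|\mathbf{\Lambda}^+|\le (d+1)|\mathbf{\Lambda}|$), and the observation that on the resulting balanced event $\mathbf D$ is the strict closest ground state so that $\mathbf X=|\mathbf\Lambda|$; the only differences are the order of the parts (the paper runs the argument for (ii) first and derives (i) and (iii) from it, while you prove (iii) first and derive (i) and (ii)) and immaterial parameter choices. One small caveat: since this section does not assume $q$ is even, a competing principal partition $(A',B')$ may satisfy $A\setminus A'=\emptyset$ (when $A\subsetneq A'$), in which case the lower bound on $|X_{A',B'}(\mathbf f)|$ must instead come from counting vertices of $\cL_{d-1}$ colored from $B\setminus B'$, as the paper notes.
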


\begin{proof}
Let $(A, B)$ and $t$ be as in (ii). By Lemma~\ref{lem:defectsize}, we have
\begin{equation}\label{ineq:defect1}
\pr\left(|\mathbf\Lambda|\leq tN \mid \mathbf D=(A, B)\right) \geq 1 - \exp\left(-\Omega\left(tN/d\right)\right).
\end{equation}

For short, write $\mathbf{\Lambda}^+$ for $\cup_{\gamma\in\mathbf{\Lambda}}\gamma^+$, and let $|\mathbf{\Lambda}^+|:= \sum_{\gamma\in\mathbf{\Lambda}}|\gamma^+|$.
For each $c\in A$ ($c\in B$, resp.), let $Z(c)$ denote the number of vertices in $\cL_d\setminus\mathbf{\Lambda}^+$ ($\cL_{d-1}\setminus\mathbf{\Lambda}^+$, resp.) that receive color $c$ conditioned on the event that $\mathbf D=(A, B)$. Since in Step 4 the vertices of $\cL_d\setminus\mathbf{\Lambda}^+$ ($\cL_{d-1}\setminus\mathbf{\Lambda}^+$, resp.) are colored $k$ independently with probability $1/|A|$ ($1/|B|$, resp.), we have that
\[
Z(c)\sim \bin\left(|\cL_d\setminus\mathbf{\Lambda}^+|, 1/|A|\right) (\bin\left(|\cL_{d-1}\setminus\mathbf{\Lambda}^+|, 1/|B|\right), \mbox{resp.}).
\]
Note that $|\mathbf{\Lambda}^+|\leq (d+1)|\mathbf{\Lambda}|$ and $t\leq 1/(4d)$. 
Then if $|\mathbf \Lambda|\leq tN$, we have both $|\cL_d\setminus\mathbf{\Lambda}^+|,\ |\cL_{d-1}\setminus\mathbf{\Lambda}^+| \geq N/6$.
Thus by Chernoff's bound, for every color $c\in\cC$, 
\begin{equation}\label{ineq:defect2}
\pr\left(Z(c)\leq \bO(N) \mid \mathbf D=(A, B),\ |\mathbf \Lambda|\leq tN\right) \leq \exp\left(-\Omega\left(N\right)\right).
\end{equation}

For each $c\in A$ ($c\in B$, resp.), let $Y(c)$ denote the number of vertices in $\cL_d$ ($\cL_{d-1}$, resp.) that receive color $c$ conditioned on the event that $\mathbf D=(A, B)$.
Observe that for every $c$,
\begin{equation}\label{ineq:defect3}
0\leq Y(c) - Z(c) \leq |\mathbf{\Lambda}^+|.
\end{equation}
Then by \eqref{ineq:defect2}, \eqref{ineq:defect3} and the union bound, we have
\begin{equation}\label{ineq:defect4}
\pr\left(\min_{c\in\cC}Y(c)=\Omega(N) \mid \mathbf D=(A, B),\ |\mathbf \Lambda|\leq tN\right) \geq 1 - \exp\left(-\Omega\left(N\right)\right).
\end{equation}

Now, suppose that the events $\{\mathbf D=(A, B)\}$, $\{|\mathbf \Lambda|\leq tN\}$, and $\{\min_{c\in\cC}Y(c)=\Omega(N)\}$ all hold. For any principal partition $(A', B')\neq (A, B)$, either $A\setminus A'\neq \emptyset$, or $B\setminus B'\neq \emptyset$.
Since $\min_{c\in\cC}Y(c)=\Omega(N)$, $\mathbf f$ must disagree with $(A', B')$ on $\Omega(N)$ vertices. 
On the other hand, $\mathbf f$ disagrees with $(A, B)$ at $\mathbf \Lambda$, which is of size at most $tN=O(N/d)$. Then $(A, B)$ must be the closest ground state of $\mathbf f$, and thus $\mathbf X=|\mathbf \Lambda|$. Moreover,
\[
\pr\left(\mathbf X=|\mathbf \Lambda|\right) \geq \pr\left(\min_{c\in\cC}Y(c)=\Omega(N), \  |\mathbf \Lambda|\leq tN, \  \mathbf D=(A, B)\right).
\]
Note that for any $(A, B)$, by \eqref{ineq:defect1} and \eqref{ineq:defect4}, we have
\begin{equation}\label{ineq:defect5}
\begin{split}
&\pr\left(\min_{c\in\cC}Y(c)=\Omega(N), \  |\mathbf \Lambda|\leq tN \mid \mathbf D=(A, B)\right) \\
=\ &\pr\left(\min_{c\in\cC}Y(c)=\Omega(N) \mid\  |\mathbf \Lambda|\leq tN, \  \mathbf D=(A, B)\right)
\pr\left(|\mathbf \Lambda|\leq tN \mid \mathbf D=(A, B)\right) \\
\geq\ & 1 - \exp\left(-\Omega\left(tN/d\right)\right).
\end{split}
\end{equation}
Therefore, we conclude that
\[
\pr(\mathbf X \leq tN \mid \mathbf D=(A, B)) \geq \pr\left(\min_{c\in\cC}Y(c)=\Omega(N), \  |\mathbf \Lambda|\leq tN \mid \mathbf D=(A, B)\right) \geq\ 1 - \exp\left(-\Omega\left(tN/d\right)\right),
\]
which leads to (ii).

For (i), apply \eqref{ineq:defect5} with $t=1/(4d)$, we have
\[
\pr\left(\mathbf X=|\mathbf \Lambda|\right) \geq \sum_{(A, B)}\pr\left(\min_{c\in\cC}Y(c)=\Omega(N), \  |\mathbf \Lambda|\leq tN \mid \mathbf D=(A, B)\right) \pr\left(\mathbf D=(A, B)\right)
\geq 1 - \exp\left(-\Omega\left(N/d^2\right)\right).
\]

Let $s$ be as in (iii) and assume that $|\mathbf\Lambda^+|\geq s|\cL_d|/4$.
Recall that $|\mathbf\Lambda^+| \leq (d+1)|\mathbf\Lambda|$, and then we must have $|\mathbf\Lambda|\geq tN$ for $t=s/(10d) \leq 1/(4d)$.
Thus by \eqref{ineq:defect1}, 
\begin{equation}\label{ineq:defect6}
\pr\left(|\mathbf\Lambda^+|\leq s\frac{|\cL_d|}{4} \mid \mathbf D=(A, B)\right) \geq 1 - \exp\left(-\Omega\left(sN/d^2\right)\right).
\end{equation}
Let $c\in A$. Similarly as before, as $|\mathbf\Lambda^+|\leq s|\cL_d|/4$, applying Chernoff's bound on $Z(c)$ gives that with probability at least $1 - \exp\left(-\Omega\left(s^2N\right)\right)$,
\[
(1 - s/4)\frac{|\cL_d\setminus\mathbf\Lambda^+|}{|A|} \leq Z(c)\leq (1 + s/4)\frac{|\cL_d\setminus\mathbf\Lambda^+|}{|A|}.
\]
This, together with \eqref{ineq:defect3}, gives that
\[
\left(\frac{1}{|A|}-s\right)|\cL_d|\leq Y(c) \leq \left(\frac{1}{|A|}+s\right)|\cL_d|.
\]
Then we conclude that for every $c\in A$,
\[
\pr\left(\left|\frac{Y(c)}{|\cL_d|} - \frac{1}{|A|}\right|\leq s \mid |\mathbf\Lambda^+|\leq s\frac{|\cL_d|}{4}, \  \mathbf D=(A, B)\right)
\geq 1 - \exp\left(-\Omega\left(s^2N\right)\right).
\]
The analogous statement holds for $c\in B$. Finally, the above, together with \eqref{ineq:defect6} and the union bound, implies (iii).
\end{proof}

\end{document}